\numberwithin{figure}{section}
\numberwithin{equation}{section}
\newtheorem{theorem}[equation]{Theorem}
\newtheorem{lemma}[equation]{Lemma}
\newtheorem{proposition}[equation]{Proposition}
\newtheorem{corollary}[equation]{Corollary}
\theoremstyle{definition}
\newtheorem{defn}[equation]{Definition}
\newtheorem{hyp}[equation]{Hypothesis}
\newtheorem{remark}[equation]{Remark}
\theoremstyle{remark}
\newtheorem*{Claim}{Claim}
\newcommand{\one}{\mathds 1}
\newcommand{\bsr}{\boldsymbol{r}}
\newcommand{\bss}{\boldsymbol{s}}
\newcommand{\cl}{\operatorname{cl}}
\newcommand{\col}{\colon}
\newcommand{\cone}{\operatorname{cone}}
\newcommand{\da}{{\downarrow}}
\newcommand{\End}{\operatorname{End}}
\newcommand{\Ext}{\operatorname{Ext}}
\newcommand{\funct}{\operatorname{\mathcal{H}\!\!\;\mathit{om}}}
\newcommand{\ges}{{\scriptscriptstyle\geqslant}}
\newcommand{\Hom}{\operatorname{Hom}}
\newcommand{\ik}{ik}
\newcommand{\Inj}{\operatorname{\mathsf{Inj}}}
\newcommand{\Ker}{\operatorname{Ker}}
\newcommand{\kos}[2]{{#1/\!\!/#2}}
\newcommand{\KInj}[1]{\mathsf K(\Inj #1)}
\newcommand{\KacInj}[1]{\mathsf K_{\mathsf{ac}}(\Inj #1)}
\newcommand{\KInjc}[1]{\mathsf K^{\mathsf c}(\Inj #1)}
\newcommand{\Loc}{\operatorname{\mathsf{Loc}}}
\newcommand{\lotimes}{\otimes^{\mathbf L}}
\newcommand{\mmod}{\operatorname{\mathsf{mod}}}
\newcommand{\Mod}{\operatorname{\mathsf{Mod}}}
\newcommand{\nat}[1]{{#1}^{\natural}}
\newcommand{\Proj}{\operatorname{Proj}}
\newcommand{\proj}{\operatorname{\mathsf{proj}}}
\newcommand{\res}{\operatorname{res}}
\newcommand{\RHom}{\operatorname{{\mathbf R}\mathrm{Hom}}}
\newcommand{\Spec}{\operatorname{Spec}}
\newcommand{\stmod}{\operatorname{\mathsf{stmod}}}
\newcommand{\StMod}{\operatorname{\mathsf{StMod}}}
\newcommand{\supp}{\operatorname{supp}}
\newcommand{\Thick}{\operatorname{\mathsf{Thick}}}
\newcommand{\ua}{{\uparrow}}
\newcommand{\xra}{\xrightarrow}
\renewcommand{\Gamma}{\varGamma}
\newcommand{\vf}{\varphi}
\newcommand{\dg}{{dg}~}
\newcommand{\Z}{\mathbb Z}
\newcommand{\mcV}{\mathcal V}
\newcommand{\mcW}{\mathcal W}
\newcommand{\mcZ}{\mathcal Z}
\newcommand{\fa}{\mathfrak a}
\newcommand{\fb}{\mathfrak b}
\newcommand{\fm}{\mathfrak m}
\newcommand{\fp}{\mathfrak p}
\newcommand{\fq}{\mathfrak q}
\newcommand{\sfC}{\mathsf C}
\newcommand{\sfD}{\mathsf D}
\newcommand{\sfc}{\mathsf c}
\newcommand{\sfK}{\mathsf K}
\newcommand{\sfS}{\mathsf S}
\newcommand{\sfT}{\mathsf T}
\title[Stratifying modular representations]
{Stratifying modular representations of\\ finite groups}
\author{Dave Benson}
\address{Dave Benson \\
Department of Mathematics\\
University of Aberdeen\\
Aberdeen AB24 3UE\\
Scotland, U.K.}
\author{Srikanth B. Iyengar}
\address{Srikanth B. Iyengar\\ Department of Mathematics\\
University of Nebraska\\ Lincoln NE 68588\\ U.S.A.}
\author{Henning Krause}
\address{Henning Krause\\ Institut f\"ur Mathematik\\
  Universit\"at Paderborn\\ 33095 Paderborn\\ Germany}
\curraddr{Fakult\"at f\"ur Mathematik\\ Universit\"at Bielefeld\\
  33501 Bielefeld\\ Germany.}
\subjclass[2010]{20J06 (primary), 20C20, 13D45, 16E45, 18E30}
\keywords{local cohomology, local-global principle, modular representation theory,  stable module category, stratification, telescope conjecture}
\begin{document}
\begin{abstract}
We classify localising subcategories of the stable module category of
a finite group that are closed under tensor product with simple (or,
equivalently all) modules.  One application is a proof of the
telescope conjecture in this context. Others include new proofs of the
tensor product theorem and of the classification of thick
subcategories of the finitely generated modules which avoid the use of
cyclic shifted subgroups. Along the way we establish similar
classifications for differential graded modules over graded polynomial
rings, and over graded exterior algebras.
\end{abstract}

\maketitle

\setcounter{tocdepth}{1}
\tableofcontents

\section{Introduction}

Let $G$ be a finite group and $k$ a field of characteristic $p$, where $p$ divides the order of $G$.
Let $\Mod(kG)$ be the category of possibly infinite dimensional modules over the group algebra $kG$.
In this article, a full subcategory $\sfC$ of $\Mod(kG)$ is said to be \emph{thick} if it satisfies the following  conditions.
\begin{itemize}
\item 
Any direct summand of a module in $\sfC$ is also in $\sfC$.
\item If
$0 \to M_1 \to M_2 \to M_3 \to 0$
is an exact sequence of $kG$-modules, and two of $M_1$, $M_2$, $M_3$ are in $\sfC$ then so is the third.
\end{itemize}
A thick subcategory $\sfC$ is \emph{localising} when, in addition, the following property holds:
\begin{itemize}
\item If $\{M_\alpha\}$ is a set of modules in $\sfC$ then
$\bigoplus_{\alpha}M_\alpha$ is in $\sfC$.
\end{itemize}
By a version of the Eilenberg swindle the first condition follows from the others.

There is a notion of \emph{support} in this context, introduced by Benson, Carlson and Rickard \cite{Benson/Carlson/Rickard:1996a}. It associates to each $kG$-module $M$ a subset $\mcV_G(M)$ of the set $\Proj H^*(G,k)$ of homogeneous prime ideals in the cohomology ring $H^*(G,k)$ other than the maximal ideal of positive degree elements.

Our main result is that $\Proj H^*(G,k)$ stratifies $\Mod(kG)$, in the following sense.

\begin{theorem}
\label{th:main}
There is a natural one to one correspondence between non-zero localising subcategories of $\Mod(kG)$ that are closed under tensoring with simple $kG$-modules and subsets of $\Proj H^*(G,k)$.

The localising subcategory corresponding to a subset $\mcV\subseteq\Proj H^*(G,k)$ 
is the full subcategory of modules $M$ satisfying $\mcV_G(M)\subseteq\mcV$.
\end{theorem}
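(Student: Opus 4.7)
The plan is to adapt the stratification formalism developed by the authors in prior work, reducing the bijection to a minimality statement at each homogeneous prime of $H^*(G,k)$. One direction is bookkeeping: the assignment $\mcV \mapsto \{M \in \Mod(kG) : \mcV_G(M) \subseteq \mcV\}$ manifestly produces a localising subcategory closed under tensoring with simple modules, by the standard behaviour of $\mcV_G$ with respect to exact triangles, coproducts, and tensor products. Shifting perspective to the stable category $\StMod(kG)$, where the projective modules form the localising subcategory corresponding to $\mcV = \varnothing$, Theorem~\ref{th:main} becomes equivalent to classifying non-zero localising subcategories of $\StMod(kG)$ closed under $-\otimes_k S$ for $S$ simple.

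The next step is to invoke the \emph{local-global principle}: a tensor-ideal localising subcategory $\sfC \subseteq \StMod(kG)$ is determined by the family $\{\Gamma_\fp \sfC\}_{\fp \in \Proj H^*(G,k)}$ of its local components. Before appealing to this I would observe that closure under tensoring with simples already forces closure under tensoring with arbitrary modules: any $kG$-module $N$ is a filtered colimit of finite-dimensional submodules, each of which admits a finite composition series with simple factors, and closure under coproducts, triangles, and direct summands then bootstraps simple-closure into full tensor-ideality, via the standard Milnor presentation of filtered colimits as cofibres. With tensor-ideality in hand, the classification reduces to proving that each $\Gamma_\fp \StMod(kG)$ is \emph{minimal}, i.e.\ has no proper non-zero localising tensor-ideal subcategory, together with the non-vanishing statement that this local category is non-zero exactly when $\fp$ lies in the support variety.

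The main obstacle is minimality. My plan is to reduce it via restriction--induction to elementary abelian $p$-subgroups: Chouinard's theorem together with Quillen stratification identifies the primes of $H^*(G,k)$ with data on elementary abelians, and a Mackey-style argument should transport the minimality question from $G$ to each such $E$. For an elementary abelian $p$-group the cohomology ring is, modulo nilpotents, a graded polynomial ring, and a Koszul-type equivalence exchanges its category of DG modules with DG modules over a graded exterior algebra. My strategy is then to establish minimality directly in these two auxiliary DG settings --- the classifications advertised in the abstract --- by showing, using local cohomology functors $\Gamma_\fp$ and Koszul-object constructions, that at each prime $\fp$ any non-zero local object builds a canonical ``residue field'' object $\kappa(\fp)$, which in turn tensor-generates the whole local category. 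The most delicate point will be controlling the contribution of the non-polynomial (exterior) part of the cohomology and transferring the minimality statement faithfully from the DG setting back across the Koszul equivalence to $\StMod(kG)$ itself.
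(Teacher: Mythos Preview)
Your outline matches the paper's strategy in all its major moves: the reduction from $\Mod(kG)$ to $\StMod(kG)$, the upgrade from simple-closure to full tensor-ideality, the local-global principle reducing to minimality of each $\Gamma_\fp$, the descent from $G$ to elementary abelian subgroups via Chouinard and Quillen stratification, and the residue-field argument for the polynomial ring $S$. The paper also routes through $\KInj{kG}$ rather than $\StMod(kG)$ for most of the argument, but this is a minor difference.

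The one place where your plan is genuinely incomplete is the step you yourself flag as most delicate: getting from the DG setting back to $kE$. You speak of a Koszul-type equivalence between DG modules over the polynomial ring and over an exterior algebra, which is correct (BGG), but neither of these categories is $\KInj{kE}$ or $\StMod(kE)$, and there is no direct equivalence to exploit. The paper's device is to interpose a third DG algebra: the Koszul complex $A$ of $kE$ on the generators $z_1,\dots,z_r$, viewed as a DG Hopf algebra. This $A$ has two crucial features: it is quasi-isomorphic to a pure exterior algebra $\Lambda$ over $k$ (so $\KInj A\simeq\KInj\Lambda\simeq\sfD(S)$, transferring stratification by equivalence), and its degree-zero part $A^0$ is exactly $kE$. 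The passage from $A$ to $A^0=kE$ is \emph{not} an equivalence; instead the paper proves a general descent theorem (Theorem~\ref{th:strat-descent}): if $\KInj A$ is stratified by $\Ext^*_A(k,k)$, then $\KInj{A^0}$ is stratified by $\Ext^*_{A^0}(k,k)$. The proof uses the restriction/coinduction adjunction between $\KInj A$ and $\KInj{kE}$ together with the Hom-criterion for minimality (Lemma~\ref{le:loc-minimal}). Without this intermediate $A$ and the descent step, there is no bridge from your DG classifications back to $kE$, so this is the missing idea in your plan.
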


A more precise version of this result is given in Theorem~\ref{th:Mod}. It is 
modelled on Neeman's classification \cite{Neeman:1992a} of the localising subcategories of the 
unbounded derived category $\sfD(\Mod R)$ of complexes of modules over a noetherian 
commutative ring $R$. Neeman's work in turn was inspired by Hopkins' classification \cite{Hopkins:1987a} of the thick subcategories of the perfect complexes over $R$ in terms of specialisation closed subsets of $\Spec R$. The corresponding classification problem for the stable category $\stmod(kG)$ of finitely generated $kG$-modules was solved by Benson, Carlson and Rickard \cite{Benson/Carlson/Rickard:1997a}, at least in the case where $k$ is algebraically closed. There is also a discussion in that paper as to why one demands
that the subcategories are closed under tensor products with simple modules. For a $p$-group this condition is automatically satisfied, but for an arbitrary finite group there is a subvariety of $\Proj H^*(G,k)$ called the \emph{nucleus}, that encapsulates the obstruction to a classification of all localising subcategories, at least for the principal block.

Applications of Theorem~\ref{th:main} include a classification of
smashing localisations of the stable category $\StMod(kG)$ of all
$kG$-modules, a proof of the telescope conjecture in this setting, a
classification of localising subcategories that are closed under
products and duality, and a description of the left perpendicular category of a
localising subcategory.

We also provide new proofs of the subgroup theorem, the tensor product 
theorem and the classification of thick subcategories of $\stmod(kG)$. 
The proofs of these results given 
in \cite{Benson/Carlson/Rickard:1996a,Benson/Carlson/Rickard:1997a} 
rely heavily on the use of cyclic shifted subgroups and an infinite 
dimensional version of Dade's lemma, which play no role in our work.

As intermediate steps in the proof of the classification theorems for 
modules over $kG$, we establish analogous results on differential 
graded modules over polynomial rings, and over exterior algebras. 
These are of independent interest.

This paper is the third in a sequence devoted to supports and localising 
subcategories of triangulated categories. We have tried to make this paper 
as easy as possible to read without having to go through the first two papers 
\cite{Benson/Iyengar/Krause:2008a,Benson/Iyengar/Krause:bik2} in the series. 
In particular, some of the arguments in them have been repeated in this more 
restricted context for convenience.

\subsection*{Acknowledgments}
The work of the second author was partly supported by NSF grant DMS 0602498. The first and second authors are grateful to the Humboldt Foundation for providing support which enabled them to make extended research visits to Paderborn to work with the third author. All three authors are grateful to the Mathematische Forschungsinstitut at Oberwolfach for support through a ``Research in Pairs'' visit. The authors also thank the referee for useful comments.

\section{Strategy}\label{se:strategy}

The proof of Theorem~\ref{th:main} consists of a long chain of 
transitions from one category to another. In this section, we give 
an outline of the strategy.

The first step is to reduce to the stable module category $\StMod(kG)$.
This category has the same objects as the module category $\Mod(kG)$, 
but the morphisms in $\StMod(kG)$ are given by quotienting out those 
morphisms in $\Mod(kG)$ that factor through a projective module. 
The category $\StMod(kG)$ is a triangulated category, in which the 
triangles come from the short exact sequences of $kG$-modules. See 
for example Theorem I.2.6 of Happel \cite{Happel:1988a}
or \S5 of Carlson \cite{Carlson:1996a} for further details.

A \emph{thick subcategory} of a triangulated category is a full
triangulated subcategory that is closed under direct summands. A
\emph{localising subcategory} of a triangulated category is a
full triangulated subcategory that is closed under direct sums. By an
Eilenberg swindle, localising subcategories are also closed under
direct summands, and hence thick.

Recall that, given $kG$-modules $M$ and $N$, one considers $M\otimes_{k}N$ as a $kG$-module with the diagonal $G$-action. The $kG$-modules $M\otimes_{k}N$ and $N\otimes_{k}M$ are isomorphic, and $P\otimes_{k}N$ is projective for any projective $kG$-module $P$. Thus the tensor product on $\Mod(kG)$ passes down to a tensor product on $\StMod(kG)$. In either context, we say that a thick, or localising, subcategory $\sfC$ is \emph{tensor ideal} if it satisfies the following condition.
\begin{itemize}
\item If $M$ is in $\sfC$ and $S$ is a simple $kG$-module then $M \otimes_k S$
is in $\sfC$.
\end{itemize}
This condition is vacuous if $G$ is a finite $p$-group since the only simple module is $k$ with trivial $G$-action. Furthermore, every $kG$-module has a finite filtration whose subquotients are direct sums of simple modules (induced by the radical filtration of $kG$), so the condition above is equivalent to:
\begin{itemize}
\item If $M$ is in $\sfC$ and $N$ is any $kG$-module then $M\otimes_k N$ is in $\sfC$.
\end{itemize}

\begin{proposition}
\label{pr:Mod-to-StMod}
Every non-zero tensor ideal localising subcategory of $\Mod(kG)$ contains the localising subcategory of projective modules.

The canonical functor from $\Mod(kG)$ to $\StMod(kG)$ induces a one to
one correspondence between non-zero tensor ideal localising
subcategories of $\Mod(kG)$ and tensor ideal localising subcategories of
$\StMod(kG)$.
\end{proposition}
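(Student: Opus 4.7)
The plan is in three steps: prove that any non-zero tensor ideal localising subcategory $\sfC$ of $\Mod(kG)$ contains $kG$ (and hence all projectives), then define the image/preimage maps between the two classes of subcategories, and finally check they are mutually inverse.

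For the first assertion, suppose $M\in\sfC$ is non-zero. The paper observes that being closed under tensor with simples implies being closed under tensor with arbitrary $kG$-modules, via the (finite) radical filtration of any $kG$-module. In particular $M\otimes_k kG\in\sfC$. Now the standard twist map $m\otimes g\mapsto g^{-1}m\otimes g$ realises $M\otimes_k kG$ (with diagonal action) as a free $kG$-module of rank $\dim_k M\ne 0$, so $kG$ is a direct summand of an object of $\sfC$. Since $\sfC$ is localising, and hence closed under summands and arbitrary coproducts, it contains every projective $kG$-module. This proves the first assertion, and also shows that every non-zero tensor ideal localising subcategory of $\Mod(kG)$ is saturated under adding or removing projective direct summands, i.e., closed under isomorphism in $\StMod(kG)$.

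For the correspondence, let $\pi\col\Mod(kG)\to\StMod(kG)$ be the canonical functor; it is the identity on objects. To a non-zero tensor ideal localising subcategory $\sfC$ of $\Mod(kG)$ I assign the full subcategory $\pi(\sfC)\subseteq\StMod(kG)$ with the same objects, and to a tensor ideal localising subcategory $\sfD\subseteq\StMod(kG)$ I assign $\pi^{-1}(\sfD)=\{M\in\Mod(kG):\pi M\in\sfD\}$. I would verify that $\pi(\sfC)$ is a tensor ideal localising subcategory of $\StMod(kG)$: closure under coproducts and the tensor with simples pass along directly; for triangles, one uses that every distinguished triangle in $\StMod(kG)$ is, up to isomorphism in $\StMod(kG)$, the image of a short exact sequence in $\Mod(kG)$, so two-out-of-three in $\sfC$ gives two-out-of-three in $\pi(\sfC)$ modulo projective summands, which are in $\sfC$ by the first step. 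Conversely $\pi^{-1}(\sfD)$ is visibly closed under coproducts, contains the projectives (which become zero), is tensor ideal, and inherits the triangulated two-out-of-three from $\StMod(kG)$ using that short exact sequences in $\Mod(kG)$ induce triangles in $\StMod(kG)$.

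Finally one checks the two constructions are mutually inverse. The equality $\pi(\pi^{-1}(\sfD))=\sfD$ is immediate because $\pi$ is the identity on objects and $\sfD$ is closed under isomorphism. For $\pi^{-1}(\pi(\sfC))=\sfC$, containment $\supseteq$ is trivial; for $\subseteq$, if $\pi M\cong \pi M'$ with $M'\in\sfC$, there exist projectives $P,P'$ with $M\oplus P\cong M'\oplus P'$ in $\Mod(kG)$, and since $P,P',M'$ all lie in $\sfC$ by the first step, so does $M$ as a direct summand. The main obstacle is the compatibility of triangles with the image/preimage operation in step two; every step after that is bookkeeping, and the whole argument hinges on the projective-absorption property established in step one.
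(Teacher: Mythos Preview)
Your proof is correct and follows the same approach as the paper: the key observation in both is that $M\otimes_k kG$ is free (you supply the explicit twist isomorphism where the paper just asserts the fact), forcing $kG$ and hence all projectives into any non-zero tensor ideal localising $\sfC$. The paper then declares the correspondence ``clear,'' whereas you spell out the image/preimage maps and the projective-absorption argument for $\pi^{-1}(\pi(\sfC))=\sfC$; this is exactly the bookkeeping the paper is suppressing, and your treatment of it is sound.
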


\begin{proof}
The tensor product of any module with $kG$ is a direct sum of
copies of $kG$. So if $\sfC$ is a non-zero tensor ideal localising subcategory
of $\Mod(kG)$,  it contains $kG$,
and hence every projective $kG$-module. This proves the first statement
of the proposition. The rest is now clear.
\end{proof}

There are some technical disadvantages to working in $\StMod(kG)$ that
are solved by moving to a slightly larger triangulated category:
$\KInj{kG}$, the category whose objects are the complexes of injective
$kG$-modules and whose morphisms are the homotopy classes of degree
preserving maps of complexes. The tensor product of modules extends to
complexes, and defines a tensor product on $\KInj{kG}$. This category
was investigated in detail by Benson and Krause
\cite{Benson/Krause:2008a}.  Taking the Tate resolution $tM=M\otimes_k
tk$ of a $kG$-module $M$ gives an equivalence of triangulated
categories from the stable module category $\StMod(kG)$ to the full
subcategory $\KacInj{kG}$ of $\KInj{kG}$ consisting of acyclic
complexes.  This equivalence preserves the tensor product. It suffices
to check that $tk\otimes_k tk\cong tk$, and an explicit isomorphism
can be found in Section~XII.4 of Cartan and Eilenberg
\cite{Cartan/Eilenberg:1956}.

The Verdier quotient of $\KInj{kG}$ by $\KacInj{kG}$ is the unbounded derived category $\sfD(\Mod kG)$. There are left and right adjoints, forming a recollement
\[ 
\StMod(kG) \xrightarrow{\sim} \KacInj{kG}
 \ \begin{smallmatrix} \Hom_k(tk,-) \\
\hbox to 50pt{\leftarrowfill} \\ \hbox to 50pt{\rightarrowfill} \\ 
\hbox to 50pt{\leftarrowfill} \\ - \otimes_k tk
\end{smallmatrix} \ \KInj{kG} \ \begin{smallmatrix} \Hom_k(pk,-) \\
\hbox to 50pt{\leftarrowfill} \\ \hbox to 50pt{\rightarrowfill} \\ 
\hbox to 50pt{\leftarrowfill} \\ - \otimes_k pk
\end{smallmatrix} \ \sfD(\Mod kG) 
\]
where $pk$ denotes a projective resolution of $k$.

It is shown in \cite{Benson/Krause:2008a} that the theory of supports
for $\StMod(kG)$ developed in \cite{Benson/Carlson/Rickard:1996a}
extends in a natural way to $\KInj{kG}$. Exactly one more prime ideal
appears in the theory, namely the maximal ideal $\fm$ of positive
degree elements in $H^*(G,k)$. We write $\Spec H^*(G,k)$ for the set
$\Proj H^*(G,k)\cup\{\fm\}$ of all homogeneous prime ideals in
$H^*(G,k)$. If $X$ is an object in $\KInj{kG}$ then there is
associated to it a \emph{support} $\mcV_G(X)$, which is a subset of
$\Spec H^*(G,k)$; see \S\ref{se:strat}.

\begin{proposition}
\label{pr:StMod-to-KInj}
For every tensor ideal localising subcategory $\sfC$ of $\StMod(kG)$ there
are two tensor ideal localising subcategories of $\KInj{kG}$. One is the image
of $\sfC$ and the other is generated by this together with $pk$.
This sets up a two to one correspondence between tensor ideal localising 
subcategories of $\KInj{kG}$ and those of $\StMod(kG)$.
\end{proposition}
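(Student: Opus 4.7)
The plan is to exploit the recollement $\KacInj{kG}\hookrightarrow\KInj{kG}\twoheadrightarrow\sfD(\Mod kG)$ and the associated natural triangle
\[
X\otimes_k pk \;\longrightarrow\; X \;\longrightarrow\; X\otimes_k tk \;\longrightarrow\; \Sigma(X\otimes_k pk)
\]
for $X\in\KInj{kG}$, in which $X\otimes_k pk$ lies in $\Loc^\otimes(pk)$ (the essential image of $-\otimes_k pk$) and $X\otimes_k tk$ lies in $\KacInj{kG}$. Two recollement identities will be used throughout: $pk\otimes_k tk=0$ (equivalent to $i^*j_!=0$) and $M\otimes_k tk\cong M$ for $M\in\KacInj{kG}$ (from $tk\otimes_k tk\cong tk$). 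I first record that every tensor ideal localising subcategory of $\KInj{kG}$ is automatically closed under tensoring with an arbitrary object of $\KInj{kG}$: via the finite radical filtration of $kG$ the tensor ideal property extends from simples to all $kG$-modules, and then to all objects via truncation triangles and coproducts.

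Given $\sfC\subseteq\StMod(kG)\simeq\KacInj{kG}$ tensor ideal localising, its image $i(\sfC)\subseteq\KInj{kG}$ is tensor ideal localising by inspection, and $\sfL':=\Loc^\otimes(i(\sfC),pk)$ is so by construction. The triangulated coproduct-preserving functor $F\colon X\mapsto X\otimes_k tk$ sends simples to themselves, acts as the identity on $\KacInj{kG}$, and satisfies $F(pk)=0$; hence $F(\sfL')\subseteq\sfC$ and $\sfL'\cap\KacInj{kG}=\sfC$, while $i(\sfC)\cap\KacInj{kG}=\sfC$ is immediate. Since $H^0(pk)=k$ we have $pk\notin\KacInj{kG}$, giving distinctness $i(\sfC)\subsetneq\sfL'$. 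For surjectivity, let $\sfL$ be any tensor ideal localising subcategory of $\KInj{kG}$ and set $\sfC:=\sfL\cap\KacInj{kG}$; the recollement triangle places $X\otimes_k tk\in\sfC$ and $X\otimes_k pk\in\sfL\cap\Loc^\otimes(pk)$ for every $X\in\sfL$. If $pk\in\sfL$ then $\Loc^\otimes(pk)\subseteq\sfL$ and the triangle gives $X\in\sfL'$, so $\sfL=\sfL'$. Otherwise I need $\sfL\cap\Loc^\otimes(pk)=0$, which forces $X\otimes_k pk=0$ and hence $X\cong X\otimes_k tk\in\sfC$, giving $\sfL=i(\sfC)$.

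The main obstacle is the key lemma needed in the last case: under the equivalence $\Loc^\otimes(pk)\simeq\sfD(\Mod kG)$ (sending $pk\leftrightarrow k$), it reduces to showing that every non-zero tensor ideal localising subcategory $\sfE$ of $\sfD(\Mod kG)$ contains $k$. I would argue as follows. For $0\neq N\in\sfE$, closure under tensoring with $kG$ (extended from simples via the radical filtration) places $N\otimes_k kG$ in $\sfE$, and the diagonal $kG$-module $N\otimes_k kG$ is isomorphic to the induced complex $\mathrm{Ind}_{\{1\}}^G(N|_k)$. Since every complex of $k$-vector spaces is formal, this decomposes in $\sfD(\Mod kG)$ as $\bigoplus_i (kG)^{\oplus\dim_k H^i(N)}[-i]$, which has a non-zero summand $kG[-i]$ because some $H^i(N)$ is non-zero. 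Closure under summands and suspensions puts $kG\in\sfE$; then closure under coproducts and extensions adds every projective $kG$-module and every bounded complex of projectives, and since $pk$ is the homotopy colimit of its bounded-above truncations, $pk\in\sfE$. As $pk\cong k$ in $\sfD(\Mod kG)$ we conclude $k\in\sfE$, and the filtration argument recorded in the paper's introduction then gives $\sfE=\sfD(\Mod kG)$.
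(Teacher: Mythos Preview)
Your proof is correct and follows essentially the same approach as the paper. Both arguments hinge on the same key lemma---that any non-zero tensor ideal localising subcategory of $\sfD(\Mod kG)$ is all of $\sfD(\Mod kG)$---proved in the same way via $N\otimes_k kG$ being a direct sum of shifts of $kG$; you are simply more explicit about the recollement triangle and the case analysis, while the paper compresses this into a few lines. One minor remark: once you have $kG\in\sfE$ in $\sfD(\Mod kG)$ you are done, since $kG$ is a compact generator there; the detour through bounded truncations of $pk$ and homotopy colimits is correct but unnecessary.
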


\begin{proof}
First we claim that for any non-zero object $X$ in $\sfD(\Mod kG)$,
the tensor ideal localising subcategory generated by $X$ is the whole
of $\sfD(\Mod kG)$.

Indeed, for any non-zero $kG$-module $M$ the $kG$-module $M\otimes_{k}kG$ is free and non-zero. Therefore, 
$H^{*}(X\otimes_{k}kG)$ is a non-zero direct sum of copies of $kG$, since it is isomorphic to $H^{*}(X)\otimes_{k}kG$. This implies that in $\sfD(\Mod kG)$ the complex $X\otimes_{k}kG$ is isomorphic 
to a non-zero direct sum of copies of shifts of $kG$. Hence $kG$ is in the tensor ideal localising subcategory generated by $X$. It remains to note that every object in $\sfD(\Mod kG)$ is in the localising subcategory generated by $kG$.

Next observe that the full subcategory $\KacInj{kG}$ of $\KInj{kG}$
consisting of acyclic complexes is a tensor ideal localising
subcategory. Thus the image in $\KInj{kG}$ of every tensor ideal
localising subcategory of $\StMod(kG)$ is a tensor ideal
localising subcategory.

Now let $\sfD$ be a tensor ideal localising subcategory of $\KInj{kG}$
that is not in the image of $\StMod(kG)$. Then $\sfD$ contains some
object $X$ which is not acyclic. The image of $X$ in $\sfD(\Mod kG)$
is non-zero, and hence generates $\sfD(\Mod kG)$. Thus the tensor
ideal containing $X$ also contains $pk$. It follows that $\sfD$ is
generated by $pk$ and its intersection with the image of $\StMod(kG)$.
\end{proof}

A version of the main theorem for $\KInj{kG}$ is as follows; see also
Theorem~\ref{th:classify-KInj}.

\begin{theorem}
\label{th:loc}
There is a natural one to one correspondence between tensor ideal localising 
subcategories $\sfC$ of $\KInj{kG}$ and subsets of $\Spec H^*(G,k)$.

The localising subcategory corresponding to a subset $\mcV$ of $\Spec H^*(G,k)$
is the full subcategory of complexes $X$ satisfying $\mcV_G(X)\subseteq \mcV$.
\end{theorem}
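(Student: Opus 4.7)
The plan is to set up the obvious pair of maps and reduce the classification to a local-global principle together with a minimality statement for local pieces. For a subset $\mcV \subseteq \Spec H^*(G,k)$, set $\Psi(\mcV) = \{X \in \KInj{kG} : \mcV_G(X) \subseteq \mcV\}$; standard properties of the support function developed in \cite{Benson/Krause:2008a} show that this is a tensor ideal localising subcategory. For a tensor ideal localising subcategory $\sfC$, set $\Phi(\sfC) = \bigcup_{X \in \sfC}\mcV_G(X)$. The identity $\Phi \circ \Psi = \mathrm{id}$ reduces to the statement that every prime $\fp$ is realised as the support of some non-zero object, which is achieved by applying the local cohomology functor $\Gamma_\fp$ composed with an appropriate localisation to the unit object of $\KInj{kG}$.

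The substance of the theorem is the identity $\Psi \circ \Phi = \mathrm{id}$: if $\mcV_G(X) \subseteq \Phi(\sfC)$ then $X \in \sfC$. I would derive this from two abstract principles governing the action of $H^*(G,k)$ on $\KInj{kG}$. The \emph{local-global principle} asserts that every tensor ideal localising subcategory $\sfC$ equals $\Loc(\Gamma_\fp X : X \in \sfC,\ \fp \in \Spec H^*(G,k))$; this follows from a canonical local cohomology decomposition of each object combined with tensor ideality, which keeps the pieces inside $\sfC$ when $X$ is there. The \emph{minimality principle} asserts that for each $\fp$, the subcategory $\Gamma_\fp \KInj{kG}$ has no proper non-zero tensor ideal localising subcategory. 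Granted both, the inclusion $\Psi\Phi(\sfC) \subseteq \sfC$ follows cleanly: for $X \in \Psi\Phi(\sfC)$ and each $\fp \in \mcV_G(X)$ there is some $Y \in \sfC$ with $\Gamma_\fp Y \neq 0$; minimality puts $\Gamma_\fp X$ in the tensor ideal localising subcategory generated by $\Gamma_\fp Y$, which lies inside $\sfC$; and the local-global principle reassembles $X$ from these local pieces.

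The hard part, and the reason for the long chain of transitions announced in the introduction, is the minimality principle. My plan is a sequence of reductions. A change-of-groups argument underpinned by a Quillen-style stratification reduces the question to a closed point of $\Proj H^*(E,k)$ for an elementary abelian $p$-subgroup $E = (\Z/p)^r$; for this one must verify that support and the local cohomology functors $\Gamma_\fp$ are compatible with restriction to elementary abelians and that restriction respects the tensor structure. A Koszul-type equivalence then translates minimality of $\Gamma_\fp \KInj{kE}$ into the analogous statement for dg modules over the graded exterior algebra $H^*(E,k)$, and dually for dg modules over the symmetric algebra on the cohomology operators. The minimality statement over a graded polynomial ring can be attacked directly by reduction to residue fields at each prime, and the classification propagates back up the chain to deliver the theorem.
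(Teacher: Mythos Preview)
Your proposal is essentially the paper's proof: your maps $\Phi,\Psi$ are the paper's $\sigma,\tau$ from \eqref{eq:classification}, your local-global principle is Theorem~\ref{th:local-global}, your minimality principle is exactly the stratification condition, and your chain of reductions matches the Leitfaden in \S\ref{se:strategy}. Two points to correct in the details: the Quillen reduction does not land at a \emph{closed} point of $\Proj H^*(E,k)$---the prime $\fq$ in the originating pair $(E,\fq)$ of Theorem~\ref{th:quillen} is arbitrary, and minimality must be established at every prime of $\Spec H^*(E,k)$; and the exterior algebra in the chain is not $H^*(E,k)$ (which is polynomial, or polynomial tensor exterior in odd characteristic) but rather the algebra $\Lambda=H^*(A)$ where $A$ is the Koszul complex of $kE$---the passage runs $kE=A^0\leadsto A$ via Theorem~\ref{th:strat-descent}, $A\leadsto\Lambda$ via the quasi-isomorphism of Lemma~\ref{le:qi}, and $\Lambda\leadsto S$ via the BGG equivalence of Theorem~\ref{th:bgg}.
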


The proof of Theorems~\ref{th:main} and \ref{th:loc} occupies
\S\S\ref{se:poly}--\ref{se:main-theorem}.  An outline of the proof is
as follows. Using an appropriate version of the Quillen stratification
theorem, it suffices to work with an elementary abelian $p$-group
\[ 
E=\langle g_1,\dots,g_r\rangle .
\] 
Since $k$ is of characteristic $p$ the group algebra $kE$ is isomorphic to
the algebra
\[ 
k[z_1,\dots,z_r]/(z_1^p,\dots,z_r^p),
\]
where the (image of the element) $z_i$ corresponds to $g_i-1$.

We write $A$ for the Koszul complex on $kE$ with respect to
$z_1,\dots,z_r$, and view it as a differential graded (dg)
algebra. Thus, as a graded algebra $A$ is the exterior algebra over
$kE$ on indeterminates $y_{1},\dots,y_{r}$, with each $y_i$ of degree
$-1$, and the differential on $A$ is determined by setting
\[ 
d(z_i)=0 \qquad\text{and}\qquad d(y_i)=z_i. 
\]
Observe that the elements $z_1^{p-1}y_1,\dots,z_r^{p-1}y_r$ are cycles in $A$ of degree $-1$. Let $\Lambda$ be an exterior algebra over $k$  on $r$ generators $\xi_i$ in degree $-1$, and view it as a \dg algebra with zero differential. The map $\Lambda \to A$ given by 
\[ 
\xi_i \mapsto z_i^{p-1}y_i 
\] 
is a morphism of \dg algebras and a quasi-isomorphism; see \S\ref{se:Lambda-to-A}.

Let $S$ be a graded polynomial ring $k[x_1,\dots,x_r]$ where each variable $x_i$ is of degree $2$, and view it as a \dg algebra with zero differential. One has an isomorphism of graded $k$-algebras $S\cong \Ext^{*}_{\Lambda}(k,k)$; see \S\ref{se:bgg}.

Let $\KInj{A}$ and $\KInj{\Lambda}$ denote the homotopy categories of graded-injective \dg modules over $A$ and  $\Lambda$ respectively; see \S\ref{se:gr-inj}. Our strategy is to establish first a classification of the localising subcategories of $\sfD(S)$, the derived category of \dg $S$-modules, and then successively for $\KInj{\Lambda}$,  $\KInj{A}$, $\KInj{kE}$, $\KInj{kG}$, and finally for $\StMod(kG)$ and $\Mod(kG)$. The following diagram provides an overview of the proof of Theorem~\ref{th:main}.
\begin{gather*}
\overset{\S\ref{se:poly}}{\sfD(S)} 
\overset{\S\ref{se:bgg}}{\leadsto} 
\KInj{\Lambda} \overset{\S\ref{se:Lambda-to-A}}{\leadsto} 
\KInj{A} \overset{\S\ref{se:A-to-kE}}{\leadsto} 
\KInj{kE} \overset{\S\ref{se:kE-to-kG}}{\leadsto} \KInj{kG}
\overset{\S\ref{se:main-theorem}}{\leadsto} \Mod(kG).  \\
\text{\centerline{Leitfaden}}
\end{gather*}

The passage from $S$ to $kE$ is modelled on the work of Avramov, Buchweitz, Iyengar, and Miller \cite{Avramov/Buchweitz/Iyengar/Miller:hffc}, where it is used to establish results on numerical invariants of complexes over commutative local rings by tracking them along a chain of categories as above. The focus here is on tracking structural information. A crucial idea in executing this passage from $S$ to $kG$ is that of a
\emph{stratification} of a tensor triangulated category, which allows one to focus on minimal localising subcategories. In \S\ref{se:strat} we describe the general theory of stratifications, and show that when
a tensor triangulated category is stratified, one can classify its tensor ideal localising subcategories. This development is partly inspired by the work of Hovey, Palmieri, and Strickland \cite{Hovey/Palmieri/Strickland:1997a}.

In \S\ref{se:applics} we describe various applications, including a
classification of smashing localisations of $\StMod(kG)$, and new
proofs of the subgroup theorem, the tensor product theorem, and the
classification of thick subcategories of $\stmod(kG)$.

\section{Stratifications}\label{se:strat}

In this section, we recall 
from  \cite{Benson/Iyengar/Krause:2008a,Benson/Iyengar/Krause:bik2} 
those parts of the general theory of support varieties and stratifications 
that we wish to use in this paper.

Let $\sfT$ be a triangulated category admitting arbitrary
coproducts. Usually, we denote $\Sigma$ the shift on $\sfT$. Given a
subcategory $\sfC$ of $\sfT$ we write $\Loc(\sfC)$ for the localising
subcategory generated by $\sfC$; this is the smallest localising
subcategory of $\sfT$ containing $\sfC$. The thick subcategory
generated by $\sfC$ is denoted $\Thick(\sfC)$. An object $C$ of $\sfT$
is a \emph{generator} if $\Loc(C)=\sfT$. The standing assumption in
this article is that $\sfT$ is generated by a single compact object.
Recall that an object $C$ is \emph{compact} if the functor
$\Hom_{\sfT}(C,-)$ commutes with coproducts.

For objects $X,Y$ in $\sfT$ we write $\Hom^*_\sfT(X,Y)$ for the 
graded abelian group with degree $n$ component 
$\Hom_\sfT(X,\Sigma^n Y)$, and $\End^{*}_{\sfT}(X)$ for the graded 
ring $\Hom^{*}_{\sfT}(X,X)$.

Let $R$ be a graded commutative noetherian ring; thus $R$ is a
$\mathbb Z$-graded noetherian ring such that $rs= (-1)^{|r||s|}sr$ for
all homogeneous elements $r,s$ in $R$. We say that the triangulated
category $\sfT$ is \emph{$R$-linear}, or that $R$ \emph{acts on}
$\sfT$, if we are given a homomorphism of graded rings $\phi\col R\to
Z^*\sfT$, to the graded centre of $\sfT$. This means that for each
object $X$ there is a homomorphism of graded rings
\[
\phi_{X}\col R \to \End^*_\sfT(X)
\]
such that for each pair of objects $X,Y$ the induced left and right 
actions of $R$ on $\Hom^*_\sfT(X,Y)$ agree up to the usual sign; 
see \cite[\S4]{Benson/Iyengar/Krause:2008a}.

Let $\sfT$ be an $R$-linear triangulated category. 

We write $\Spec R$ for the set of homogeneous prime ideals in $R$. 
For any ideal $\fa$ in $R$ the subset $\{\fp\in\Spec R\mid \fp\supseteq \fa\}$ 
is denoted $\mcV(\fa)$. A subset $\mcV\subseteq\Spec R$ is 
\emph{specialisation closed} if $\fp\in\mcV$ and $\fq\supseteq\fp$ 
imply $\fq\in\mcV$. Given such a subset $\mcV$,  pick a compact generator 
$C$ of $\sfT$, and let $\sfT_\mcV$ be the full subcategory 
\[ 
\sfT_\mcV=\{X \in \sfT \mid 
\Hom^*_\sfT(C,X)_{\fp}=0\ \forall\,\fp\in\Spec R\setminus\mcV\}. 
\]
This is a localising subcategory of $\sfT$. The following result is proved in 
\cite[\S4]{Benson/Iyengar/Krause:2008a}.

\begin{proposition}
\label{pr:indep-of-C}
The localising subcategory $\sfT_\mcV$ depends only on $\mcV$ and not on the choice of compact generator $C$. 
Furthermore, there is a localisation functor $L_\mcV\col\sfT\to\sfT$ such that $L_\mcV X=0$ if and 
only if $X\in\sfT_\mcV$. \qed
\end{proposition}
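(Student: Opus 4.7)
The plan is to handle the two assertions in turn. Independence of $\sfT_\mcV$ from the choice of compact generator is essentially formal. Existence of $L_\mcV$ rests on identifying $\sfT_\mcV$ as the localising subcategory generated by a set of compact Koszul objects, after which a standard Bousfield localisation result supplies the functor.

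For independence, fix $X$ satisfying $\Hom^*_\sfT(C, X)_\fp = 0$ for all $\fp \not\in \mcV$, and let $\sfS$ denote the full subcategory of compact objects $D$ such that $\Hom^*_\sfT(D, X)_\fp = 0$ for every $\fp \not\in \mcV$. Since $\Hom^*_\sfT(-, X)$ is cohomological and localisation at $\fp$ is exact on graded $R$-modules, $\sfS$ is closed under triangles, shifts, and direct summands, so it is a thick subcategory of the compact objects $\sfT^c$. By assumption $C \in \sfS$, and a standard theorem of Neeman asserts that the thick subcategory of $\sfT^c$ generated by any compact generator equals $\sfT^c$ itself; hence $\sfS = \sfT^c$. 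Any other compact generator $C'$ therefore lies in $\sfS$, and a symmetric argument shows that $\sfT_\mcV$ is independent of the choice of $C$.

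To construct $L_\mcV$, the strategy is to realise $\sfT_\mcV$ as $\Loc(\sfK)$ for an explicit set $\sfK$ of compact objects. Given a homogeneous $r \in R$, let $\kos{C}{r}$ be the cone of the map $C \to \Sigma^{|r|}C$ induced by $r$ via $\phi_C$; iterating over a finite generating set of a homogeneous ideal $\fa$ yields a compact Koszul object $\kos{C}{\fa}$. Let $\sfK$ consist of all such $\kos{C}{\fa}$ with $\mcV(\fa) \subseteq \mcV$; since $R$ is noetherian, $\mcV = \bigcup \mcV(\fa)$ over these $\fa$. A direct computation with the defining triangles shows $\Loc(\sfK) \subseteq \sfT_\mcV$. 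Because $\Loc(\sfK)$ is generated by a set of compact objects in the compactly generated category $\sfT$, Neeman's version of Bousfield localisation yields, functorially in $X$, a triangle
\[
\Gamma_\mcV X \to X \to L_\mcV X \to \Sigma \Gamma_\mcV X
\]
with $\Gamma_\mcV X \in \Loc(\sfK)$ and $L_\mcV X$ right orthogonal to $\Loc(\sfK)$. By construction $L_\mcV$ is a localisation functor with $L_\mcV X = 0$ precisely when $X \in \Loc(\sfK)$.

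The main obstacle is proving the reverse containment $\sfT_\mcV \subseteq \Loc(\sfK)$. For $X \in \sfT_\mcV$ the triangle above forces $L_\mcV X$ to lie in $\sfT_\mcV$, so $\Hom^*_\sfT(C, L_\mcV X)_\fp = 0$ for every $\fp \not\in \mcV$. The right orthogonality of $L_\mcV X$ to every $\kos{C}{\fa} \in \sfK$, combined with the long exact sequence obtained by applying $\Hom^*_\sfT(-, L_\mcV X)$ to the Koszul triangle, forces elements of each finitely generated $\fa$ with $\mcV(\fa) \subseteq \mcV$ to act invertibly on $\Hom^*_\sfT(C, L_\mcV X)$. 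A careful prime-by-prime analysis then yields $\Hom^*_\sfT(C, L_\mcV X)_\fp = 0$ for $\fp \in \mcV$ as well, and since $C$ generates $\sfT$ we conclude $L_\mcV X = 0$, giving $X \cong \Gamma_\mcV X \in \Loc(\sfK)$.
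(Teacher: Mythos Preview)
The paper does not prove this here; it cites \cite[\S4]{Benson/Iyengar/Krause:2008a}. The argument there is more direct than yours: the assignment $X\mapsto \bigoplus_{\fp\notin\mcV}\Hom^*_\sfT(C,X)_\fp$ is a cohomological functor to graded $R$-modules that preserves coproducts (because $C$ is compact and localisation is exact and commutes with direct sums), and its kernel is exactly $\sfT_\mcV$. General Bousfield machinery (what the present paper later cites as \cite[Proposition~3.6]{Benson/Iyengar/Krause:2008a}) then produces a localisation functor with this kernel. No Koszul objects are needed; in fact the equality $\sfT_\mcV=\Loc(\sfK)$ you aim for is Proposition~\ref{pr:loc-pr}(2), proved \emph{after} Proposition~\ref{pr:indep-of-C} using the functor $\Gamma_\mcV$ it supplies. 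Your independence-of-$C$ argument is correct and standard.

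Your sketch of the reverse inclusion $\sfT_\mcV\subseteq\Loc(\sfK)$ contains a genuine error. Orthogonality of $Y$ to $\kos{C}{\fa}$ for a multi-generator ideal $\fa=(r_1,\dots,r_n)$ does \emph{not} force each $r_i$ to act invertibly on $M=\Hom^*_\sfT(C,Y)$; it only says that $r_n$ acts invertibly on $\Hom^*_\sfT(\kos{C}{r_1,\dots,r_{n-1}},Y)$. For a concrete counterexample take $\sfT=\sfD(R)$ with $R=k[x,y]$, $C=R$, and $Y=k[y,y^{-1}]$ with $x$ acting as zero: the Koszul homology $H_*(x,y;Y)$ vanishes (since $y$ is invertible), so $\Hom^*(\kos{R}{(x,y)},Y)=0$, yet $x$ is certainly not invertible on $Y$. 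Your strategy can be repaired, but the ``prime-by-prime analysis'' has to do real work: choose $\fp$ minimal in the support of $M$ (so $\fp\in\mcV$ and $M_\fp$ is $\fp R_\fp$-torsion), set $N_j=\Hom^*_\sfT(\kos{C}{r_1,\dots,r_j},Y)$ where the $r_i$ generate $\fp$, observe that each $(N_j)_\fp$ is $\fp$-torsion (being built from $M_\fp$ by kernels and cokernels), and use $N_n=0$ with the short exact sequences linking $N_{j-1}$ and $N_j$ to deduce by descending induction on $j$ that $r_j$ acts invertibly on the $\fp$-torsion module $(N_{j-1})_\fp$, hence $(N_{j-1})_\fp=0$; at $j=0$ this gives $M_\fp=0$, a contradiction. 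So the route works once corrected, but the direct cohomological-functor argument is considerably shorter.
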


This result and the theory of Bousfield localisation imply
that there is an exact functor $\Gamma_\mcV\col\sfT\to\sfT$ and for
each object $X$ in $\sfT$ an exact \emph{localisation triangle}
\begin{equation}
\label{eq:localisation}
\Gamma_\mcV X \to X \to L_\mcV X \to. 
\end{equation}

Proposition~\ref{pr:indep-of-C} has the following useful consequence.

\begin{corollary}
\label{cor:2actions}
Let $\phi,\phi'\col R \to Z^*\sfT$ be actions of $R$ on $\sfT$. If
there exists a compact generator $C$ for $\sfT$ for which the maps
$\phi_C,\phi_C'\col R \to \End_\sfT^*(C)$ agree, then for any
specialisation closed set $\mcV\subseteq \Spec R$ the functors
$\Gamma_{\mcV}$ and $L_\mcV$ defined through $\phi$ agree with those
defined through $\phi'$.
\end{corollary}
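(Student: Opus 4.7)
The plan is to reduce the statement to Proposition~\ref{pr:indep-of-C} by observing that the subcategory $\sfT_\mcV$ as defined in the text depends on the $R$-action $\phi$ only through the induced $R$-module structure on $\Hom^*_\sfT(C,X)$, and that this structure can be computed entirely from $\phi_C$.

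More precisely, first I would note that for any object $X$ the graded abelian group $\Hom^*_\sfT(C,X)$ carries two \emph{a priori} different $R$-module structures coming from $\phi$: the left action induced by $\phi_X\col R\to\End^*_\sfT(X)$ (postcomposition) and the right action induced by $\phi_C\col R\to\End^*_\sfT(C)$ (precomposition). By the compatibility stated after the definition of an $R$-linear triangulated category, these two actions agree up to the usual sign; in particular, the resulting $R$-submodule lattices, and hence the localisations $\Hom^*_\sfT(C,X)_\fp$ at any $\fp\in\Spec R$, are the same whether computed via the left or the right action. The same is true for $\phi'$. Consequently, the set $\sfT_\mcV$ defined through $\phi$ and the set $\sfT'_\mcV$ defined through $\phi'$ both coincide with
\[
\{X\in\sfT\mid \Hom^*_\sfT(C,X)_\fp=0\ \forall\,\fp\in\Spec R\setminus\mcV\},
\]
where $\Hom^*_\sfT(C,X)$ is viewed as an $R$-module via the right action through $\phi_C=\phi'_C$. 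Since this right action depends only on the map $R\to\End^*_\sfT(C)$ and the latter agrees for $\phi$ and $\phi'$ by hypothesis, we obtain $\sfT_\mcV=\sfT'_\mcV$.

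Next I invoke Proposition~\ref{pr:indep-of-C}, which guarantees that the localisation functor $L_\mcV$ is characterised by the property that $L_\mcV X=0$ iff $X\in\sfT_\mcV$; equivalently, $L_\mcV$ is the Bousfield localisation functor whose kernel is $\sfT_\mcV$. Since a Bousfield localisation is determined up to canonical isomorphism by its kernel, the equality $\sfT_\mcV=\sfT'_\mcV$ forces the two localisation functors to coincide. Finally, $\Gamma_\mcV$ is determined by $L_\mcV$ through the localisation triangle \eqref{eq:localisation}, so the acyclisation functors agree as well.

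The only point that requires a little care is the first step—checking that $\sfT_\mcV$ really depends only on the map $\phi_C$—but this is immediate once one unpacks the compatibility between the left and right actions on $\Hom^*_\sfT(C,X)$. No further input beyond Proposition~\ref{pr:indep-of-C} and the standard uniqueness of Bousfield localisations is needed, so I do not anticipate any real obstacle.
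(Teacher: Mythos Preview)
Your proposal is correct and follows essentially the same approach as the paper's own proof: both reduce to observing that $\sfT_\mcV$, and hence $L_\mcV$ and $\Gamma_\mcV$, depend on the $R$-action only through the $R$-module structure on $\Hom^*_\sfT(C,X)$, which factors through $\phi_C$. The paper's proof is a two-sentence compression of exactly the argument you have spelled out; your additional care about the left/right action compatibility and the uniqueness of Bousfield localisations simply makes explicit what the paper leaves implicit.
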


\begin{proof}
The definitions of $\Gamma_\mcV$ and $L_\mcV$ only depend on the action of $R$ on $\Hom^*_\sfT(C,X)$, since $C$ is a compact generator. The action factors through the map $R \to\End_\sfT^*(C)$, which justifies the claim. 
\end{proof}

In this work the principal source of an $R$-action on $\sfT$ is a tensor structure on it. 

\subsection*{Tensor triangulated categories} \ \medskip

\noindent
Let $(\sfT,\otimes,\one)$ be a \emph{tensor triangulated category}. By this we mean the following structure; see \cite[\S8]{Benson/Iyengar/Krause:2008a} for details.
\begin{itemize}
\item $\sfT$ is a compactly generated triangulated category with coproducts.  
\item $\sfT$ is symmetric monoidal, with \emph{tensor product} $\otimes\col\sfT\times\sfT\to\sfT$ and \emph{unit} $\one$.
\item The tensor product is exact in each variable and preserves coproducts.
\item The unit $\one$ is compact.
\item Compact objects are strongly dualisable.
\end{itemize}
In this article we make also the following assumption, for simplicity of exposition.
\begin{itemize}
\item $\sfT$ has a single compact generator.
\end{itemize}
We write $\Loc^{\otimes}(\sfC)$ for the tensor ideal localising subcategory 
generated by a subcategory $\sfC$ of $\sfT$. Observe that $\Loc(\sfC)\subseteq\Loc^{\otimes}(\sfC)$ and that the two categories coincide when the unit $\one$ is a generator for $\sfT$; this is the case in many of our contexts.

The stable category $\StMod(kG)$ of a finite group $G$ is tensor triangulated, where the tensor product is $M\otimes_{k}N$ with diagonal $G$-action and the unit is $k$. The homotopy category $\KInj{kG}$ is also tensor triangulated with the same tensor product, but here the unit is $\ik$, the injective resolution of $k$. In either case, the unit generates the category when $G$ is a $p$-group; see \cite{Benson/Krause:2008a} for details.

The ring $\End^{*}_{\sfT}(\one)$ is graded commutative and acts on
$\sfT$ via homomorphisms
\[ 
\End^{*}_{\sfT}(\one)\xrightarrow{X \otimes -} 
\End^{*}_{\sfT}(X),
\]
for each $X$ in $\sfT$. Thus any homomorphism of graded rings $R\to\End^{*}_{\sfT}(\one)$ induces an $R$-action on $\sfT$, and we call such an $R$-action \emph{canonical}.

In the remainder of this section $\sfT$ denotes a tensor triangulated category and $R$ a graded commutative noetherian ring that acts canonically on $\sfT$. 

For each specialisation closed subset $\mcV$ of $\Spec R$ there are
natural isomorphisms
\[
\Gamma_{\mcV}X \cong \Gamma_{\mcV} \one\otimes X\quad\text{and}\quad
L_{\mcV}X \cong L_{\mcV} \one\otimes X.
\]
Thus, the localisation triangle \eqref{eq:localisation} is obtained by applying $-\otimes X$ to the triangle
\[
\Gamma_\mcV \one \to \one \to L_\mcV \one \to. 
\]
These are analogues of Rickard's triangles \cite{Rickard:1997a} in $\StMod(kG)$; see \cite[\S\S4,10]{Benson/Iyengar/Krause:2008a}.

Next we recall the construction of Koszul objects from
\cite[Definition 5.10]{Benson/Iyengar/Krause:2008a}. Given a homogeneous element $r$ of $R$
and an object $X$ in $\sfT$, we write $\kos Xr$ for any object that
appears in an exact triangle
\[
X\xrightarrow{\ r } \Sigma^{|r|}X \to \kos Xr\to.
\]
Iterating this construction on a sequence of elements
$\bsr=r_{1},\dots,r_{n}$ one gets an object in $\sfT$ that is denoted
$\kos X{\bsr}$. Given an ideal $\fa$ in $R$, we write $\kos X{\fa}$
for any Koszul object on a finite sequence of elements generating
$\fa$.

While a Koszul object $\kos X{\fa}$ depends on a choice of a
generating sequence for $\fa$, the thick subcategory it generates does
not. This follows from the second part of the following statement,
where $\sqrt{\fa}$ denotes the radical of $\fa$ in $R$.

\begin{lemma}
\label{le:koszul-pr}
Let $\fa$ be an ideal in $R$ and $X$ an object in $\sfT$. 
\begin{enumerate}[\quad\rm(1)]
\item $\kos X{\fa}$ is in $\Thick(\Gamma_{\mcV(\fa)}X)$.
\item $\kos X{\fa}$ is in $\Thick(\kos X{\fb})$ for any ideal $\fb$
with $\sqrt{\fb}\subseteq \sqrt{\fa}$.
\end{enumerate}
\end{lemma}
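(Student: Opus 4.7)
The plan is to prove the two parts separately, with (2) resting on an intermediary nilpotence lemma.

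For (1), proceed by induction on the number of generators of $\fa$. In the base case $\fa=(r)$, apply the cone-of-$r$ construction to the localisation triangle $\Gamma_{\mcV(r)}X\to X\to L_{\mcV(r)}X\to$. Since $L_{\mcV(r)}X$ is supported on the complement of $\mcV(r)$, where $r$ acts invertibly, the cone $\kos{L_{\mcV(r)}X}{r}$ vanishes, so the induced triangle collapses to an isomorphism $\kos{X}{r}\simeq\kos{\Gamma_{\mcV(r)}X}{r}$. The right-hand side lies in $\Thick(\Gamma_{\mcV(r)}X)$ because a cone-of-$r$ is always built from copies of its input. For the inductive step, given $\fa=(r_1,\ldots,r_n)$ set $Y=\kos{X}{(r_1,\ldots,r_{n-1})}$ and apply the base case to $Y$ and $r_n$; the composition identity $\Gamma_{\mcV(r_n)}\Gamma_{\mcV(r_1,\ldots,r_{n-1})}=\Gamma_{\mcV(\fa)}$ then identifies the relevant local cohomology.

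The heart of (2) is the following nilpotence lemma: writing $\fa=(s_1,\ldots,s_m)$, each generator $s_i$ acts nilpotently on $\kos{X}{\fa}$. The single-element case $\fa=(r)$ is a direct calculation: from the defining triangle $X\xrightarrow{r}\Sigma^{|r|}X\xrightarrow{g}\kos{X}{r}\xrightarrow{\partial}\Sigma X$ and the centrality of the $R$-action one reads off $g\circ r_{X}=0$ and $(\Sigma r_X)\circ\partial=0$, which together give a factorisation $r_{\kos{X}{r}}=g\circ h$ with $r_{\kos{X}{r}}\circ g=0$, whence $r_{\kos{X}{r}}^{\,2}=0$. To propagate nilpotence through the iterated construction of $\kos{X}{\fa}$, I use the standard triangulated principle that in a morphism of triangles with two zero components the remaining third component squares to zero (it factors both through the connecting map and through the second map of the triangle, whose composition vanishes). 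Applying this to the endomorphism $s_i^{N}$ at each cone step at worst doubles the nilpotence exponent, so after $m$ steps every generator of $\fa$ is nilpotent on $\kos{X}{\fa}$. Graded commutativity of $R$ promotes nilpotence to arbitrary elements of $\fa$, and via $r^{k}\in\fa$ to every element of $\sqrt{\fa}$, including each generator $r_j$ of $\fb$.

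Given the nilpotence lemma, (2) is concluded as follows. Choose $N_j$ with $r_j^{N_j}=0$ on $\kos{X}{\fa}$; then $\kos{(\kos{X}{\fa})}{r_j^{N_j}}$ is the cone of a zero map and splits as $\kos{X}{\fa}\oplus\Sigma^{N_j|r_j|+1}\kos{X}{\fa}$. Combined with the elementary octahedral induction $\kos{Y}{r^{N}}\in\Thick(\kos{Y}{r})$, this gives $\kos{X}{\fa}\in\Thick(\kos{(\kos{X}{\fa})}{r_j})$. Iterating over $j=1,\ldots,n$, and checking at each stage that $r_{j+1}$ remains nilpotent on the newly formed cone by the same two-sides-zero argument, yields $\kos{X}{\fa}\in\Thick(\kos{(\kos{X}{\fa})}{\fb})$. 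Finally, the centrality of the $R$-action allows the two Koszul sequences $s_1,\ldots,s_m$ and $r_1,\ldots,r_n$ to be interleaved via a standard octahedral argument, so $\kos{(\kos{X}{\fa})}{\fb}$ and $\kos{(\kos{X}{\fb})}{\fa}$ lie in each other's thick subcategory; since the latter plainly belongs to $\Thick(\kos{X}{\fb})$, the chain closes. The main obstacle is the nilpotence lemma and its stable propagation through iterated cones: the one-generator calculation is brief, but the exponent-doubling bookkeeping and the symmetric treatment of Koszul objects in the general $R$-linear setting, without recourse to a tensor structure, are where the real work lies.
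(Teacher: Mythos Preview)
Your argument for (1) is correct, but the paper's proof is considerably shorter and avoids induction altogether. The paper observes that $\kos X\fa\in\Thick(X)$ directly from the iterated-cone construction, then applies the exact functor $\Gamma_{\mcV(\fa)}$ to get $\Gamma_{\mcV(\fa)}(\kos X\fa)\in\Thick(\Gamma_{\mcV(\fa)}X)$; since $\kos X\fa$ already lies in $\sfT_{\mcV(\fa)}$ (cf.\ \cite[Corollary~5.11]{Benson/Iyengar/Krause:2008a}), one has $\Gamma_{\mcV(\fa)}(\kos X\fa)\cong\kos X\fa$ and the claim follows. Your inductive route works, but note a small slip in the phrasing of the inductive step: $\Gamma_{\mcV(r_n)}Y$ is \emph{not} literally $\Gamma_{\mcV(r_n)}\Gamma_{\mcV(r_1,\dots,r_{n-1})}X$, since $Y=\kos X{(r_1,\dots,r_{n-1})}$ rather than $\Gamma_{\mcV(r_1,\dots,r_{n-1})}X$. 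What you actually need is the inductive hypothesis $Y\in\Thick(\Gamma_{\mcV(r_1,\dots,r_{n-1})}X)$ together with exactness of $\Gamma_{\mcV(r_n)}$, and then the composition identity.

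For (2) the paper gives no argument of its own, merely pointing to \cite[Lemma~6.0.9]{Hovey/Palmieri/Strickland:1997a}. Your nilpotence-based proof is exactly the content of that reference, so in substance you have reproduced the intended argument. One remark: the lemma is stated in the paper only after the standing hypothesis that $\sfT$ is tensor triangulated with \emph{canonical} $R$-action, so $\kos X\fa\cong X\otimes\kos\one\fa$. In that setting your final ``interleaving'' step is immediate, since $\kos{(\kos X\fa)}\fb$ and $\kos{(\kos X\fb)}\fa$ are both isomorphic to $X\otimes\kos\one\fa\otimes\kos\one\fb$, and the containment $\kos\one\fb\otimes\kos\one\fa\in\Thick(\kos\one\fb)$ follows from $\kos\one\fa\in\Thick(\one)$ and exactness of the tensor product. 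Your insistence on avoiding the tensor structure is therefore unnecessary here, and it is precisely at the interleaving step that working without it becomes delicate: swapping adjacent Koszul operations via the $3\times3$ lemma produces \emph{some} fill-in map on the cone, but identifying that fill-in with the given $R$-action requires the centrality you invoke, and the details are not entirely routine. Since the tensor hypothesis is in force, you may as well use it.
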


\begin{proof}
 By construction it is clear that $\kos X{\fa}$ is in $\Thick(X)$. Since the functor $\Gamma_{\mcV(\fa)}$ is exact, this implies that $\Gamma_{\mcV(\fa)}(\kos X{\fa})$ is in $\Thick(\Gamma_{\mcV(\fa)}X)$. It remains to note that  $\Gamma_{\mcV(\fa)}(\kos X{\fa})\cong \kos X{\fa}$, for $\kos X{\fa}$ is in $\sfT_{\mcV(\fa)}$ by \cite[Corollary 5.11]{Benson/Iyengar/Krause:2008a}. This proves (1).

The claim in (2) can be proved along the same lines as \cite[Lemma 6.0.9]{Hovey/Palmieri/Strickland:1997a}.
\end{proof}

The second part of the next result improves upon \cite[Theorem 6.4]{Benson/Iyengar/Krause:2008a}.

\begin{proposition}
\label{pr:loc-pr}
Let $\mcV$ be a specialisation closed subset of $\Spec R$.
\begin{enumerate}[\quad\rm(1)]
\item If $\mcW\supseteq \mcV$ is specialisation closed, then $\Gamma_{\mcV}X$ is in $\Loc^{\otimes}(\Gamma_{\mcW}X)$.
\item Let $C$ be a compact generator of $\sfT$. For any decomposition $\mcV=\bigcup_{i\in I}\mcV(\fa_{i})$, where each $\fa_{i}$ is an ideal in $R$, there are equalities
\[
\sfT_{\mcV}= \Loc(\{\kos C{\fa_{i}}\mid i\in I\}) = \Loc(\{\Gamma_{\mcV(\fa_{i})}C\mid i\in I\}). 
\]
\end{enumerate}
\end{proposition}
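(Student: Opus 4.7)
For part (1), the key ingredients are the tensor formula $\Gamma_{\mcV}Y \cong \Gamma_{\mcV}\one \otimes Y$ recalled just above the statement, together with the identity $\Gamma_{\mcV}\Gamma_{\mcW} \cong \Gamma_{\mcV}$ when $\mcV \subseteq \mcW$. The latter I would establish by applying $\Gamma_{\mcV}$ to the triangle $\Gamma_{\mcW}X \to X \to L_{\mcW}X \to$ and noting that $\Gamma_{\mcV}(L_{\mcW}X) = 0$, since $L_{\mcW}X$ is right-orthogonal to $\sfT_{\mcW}$ and hence to its subcategory $\sfT_{\mcV}$, while $\Gamma_{\mcV}$ takes values in $\sfT_{\mcV}$. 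This yields $\Gamma_{\mcV}X \cong \Gamma_{\mcV}\one \otimes \Gamma_{\mcW}X$, which manifestly lies in $\Loc^{\otimes}(\Gamma_{\mcW}X)$.

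For part (2), the easy containments $\Loc(\{\kos C{\fa_{i}}\}) \subseteq \Loc(\{\Gamma_{\mcV(\fa_{i})}C\}) \subseteq \sfT_{\mcV}$ follow from Lemma~\ref{le:koszul-pr}(1) and from the fact that each $\Gamma_{\mcV(\fa_{i})}C$ lies in $\sfT_{\mcV(\fa_{i})} \subseteq \sfT_{\mcV}$ together with $\sfT_{\mcV}$ being localising. For the reverse $\sfT_{\mcV} \subseteq \Loc(\{\kos C{\fa_{i}}\})$, I would combine two ingredients: the single-ideal case $\sfT_{\mcV(\fa)} = \Loc(\kos C{\fa})$ from \cite[Theorem~6.4]{Benson/Iyengar/Krause:2008a} (the result this proposition is meant to improve), and a \emph{union principle}: if $\mcV = \bigcup_{i} \mcV_{i}$ for specialisation closed $\mcV_{i}$, then $\sfT_{\mcV} = \Loc(\bigcup_{i} \sfT_{\mcV_{i}})$. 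Assuming the union principle, for $X \in \sfT_{\mcV}$ one has $X \cong \Gamma_{\mcV}X \cong \Gamma_{\mcV}\one \otimes X$, and any witness of $\Gamma_{\mcV}\one \in \Loc(\{\Gamma_{\mcV_{i}}\one\})$ transports via the exact functor $-\otimes X$ to place $X$ in $\Loc(\{\Gamma_{\mcV_{i}}X\}) \subseteq \Loc(\bigcup_{i} \sfT_{\mcV_{i}})$, which by the single-ideal case equals $\Loc(\{\kos C{\fa_{i}}\})$.

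The main obstacle is thus the union principle, or equivalently the statement $\Gamma_{\mcV}\one \in \Loc(\{\Gamma_{\mcV_{i}}\one\})$. I would set $\sfS := \Loc(\{\Gamma_{\mcV_{i}}\one\})$, form the associated Bousfield quotient $L_{\sfS}$, and argue that $Y := L_{\sfS}(\Gamma_{\mcV}\one)$ has empty support in $\Spec R$: on the one hand $Y$ still lies in $\sfT_{\mcV}$, so its support is contained in $\mcV$; on the other hand, for each $\fp \in \mcV$ there is some $i$ with $\fp \in \mcV_{i}$, and the fact that $Y$ is right-orthogonal to $\Gamma_{\mcV_{i}}\one \in \sfS$ forces the stalk $\Hom^{*}_{\sfT}(C,Y)_{\fp}$ to vanish by a standard localisation-at-$\fp$ calculation in the $R$-action framework recalled in this section (see also \cite{Benson/Iyengar/Krause:bik2}). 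An object of empty support must be zero, hence $Y = 0$ and $\Gamma_{\mcV}\one \in \sfS$, as required.
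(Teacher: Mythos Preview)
Your part~(1) is correct and matches the paper's argument: the paper cites \cite[Proposition~6.1]{Benson/Iyengar/Krause:2008a} for the isomorphism $\Gamma_{\mcV}\Gamma_{\mcW}\cong\Gamma_{\mcV}$, while you sketch its proof, but the conclusion $\Gamma_{\mcV}X\cong\Gamma_{\mcV}\one\otimes\Gamma_{\mcW}X$ is reached the same way.

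For part~(2), your route diverges from the paper and has a genuine gap. The paper's argument is short and uses Lemma~\ref{le:koszul-pr}(2) directly: since each $\fp\in\mcV$ lies in some $\mcV(\fa_{i})$, one has $\kos C{\fp}\in\Thick(\kos C{\fa_{i}})$, and combining this with the generation statement $\sfT_{\mcV}=\Loc(\{\kos C{\fp}\mid\fp\in\mcV\})$ from \cite[Theorem~6.4]{Benson/Iyengar/Krause:2008a} immediately yields $\sfT_{\mcV}\subseteq\Loc(\{\kos C{\fa_{i}}\})$. No union principle or Bousfield localisation is needed.

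Your approach, by contrast, sets $\sfS=\Loc(\{\Gamma_{\mcV_{i}}\one\})$ and claims that right-orthogonality of $Y=L_{\sfS}(\Gamma_{\mcV}\one)$ to $\Gamma_{\mcV_{i}}\one$ forces $\Hom^{*}_{\sfT}(C,Y)_{\fp}=0$ for $\fp\in\mcV_{i}$. This step does not follow as written. What you would need is that $Y$ is right-orthogonal to all of $\sfT_{\mcV_{i}}$ (so that $Y$ is $L_{\mcV_{i}}$-local), but $\sfT_{\mcV_{i}}=\Loc(\Gamma_{\mcV_{i}}C)=\Loc(\Gamma_{\mcV_{i}}\one\otimes C)$, and this lies in $\Loc^{\otimes}(\Gamma_{\mcV_{i}}\one)$, not in $\Loc(\Gamma_{\mcV_{i}}\one)$ unless $\one$ happens to generate $\sfT$. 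The fix is to take $\sfS$ tensor ideal, or better, to set $\sfS=\Loc(\{\kos C{\fa_{i}}\})$ from the start; then $\sfS\supseteq\sfT_{\mcV_{i}}$ by the single-ideal case and the argument goes through. Note however that even with this repair your detour buys nothing: the ``single-ideal case'' you invoke is not stated as such in \cite[Theorem~6.4]{Benson/Iyengar/Krause:2008a} and itself requires Lemma~\ref{le:koszul-pr}(2) to derive, so the key lemma is not avoided, only postponed.
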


\begin{proof} 
(1) {}From \cite[Proposition 6.1]{Benson/Iyengar/Krause:2008a} one gets the first isomorphism below:
\[
\Gamma_{\mcV}X \cong \Gamma_{\mcV}(\Gamma_{\mcW}X)\cong \Gamma_{\mcV}\one\otimes\Gamma_{\mcW}X.
\]
Thus $\Gamma_{\mcV}X$ is in $\Loc^{\otimes}(\Gamma_{\mcW}X)$.

(2) For each $\fp\in\mcV$ there exists an $i$ in $I$ such that $\fp\in \mcV(\fa_{i})$ holds, so that $\kos C{\fp}$ is in $\Thick(\kos C{\fa_{i}})$, by Lemma~\ref{le:koszul-pr}(2). This fact and \cite[Theorem 6.4]{Benson/Iyengar/Krause:2008a}
imply the last of the following inclusions:
\[
\Loc(\{\kos C{\fa_{i}}\mid i\in I\}) \subseteq
\Loc(\{\Gamma_{\mcV(\fa_{i})}C\mid i\in I\}) \subseteq 
\sfT_{\mcV} \subseteq \Loc(\{\kos C{\fa_{i}}\mid i\in I\}).
\]
The first inclusion holds by Lemma~\ref{le:koszul-pr}(1). The second one holds as $\Gamma_{\mcV(\fa_{i})}C$ is in $\sfT_{\mcV}$ for each $i$, since $\sfT_{\mcV(\fa_{i})}\subseteq \sfT_{\mcV}$. The inclusions above yield the desired result.
\end{proof}

\subsection*{Support} \ \medskip

\noindent
Fix a point $\fp\in\Spec R$ and let $\mcV$ and $\mcW$ be specialisation closed sets of primes such that $\mcV\setminus\mcW=\{\fp\}$. It is shown in \cite[Theorem 6.2]{Benson/Iyengar/Krause:2008a} that the functor $\Gamma_\fp=\Gamma_\mcV L_\mcW$ is independent of choice of $\mcV$ and $\mcW$ with these properties, and coincides with $L_\mcW\Gamma_\mcV$. There are natural isomorphisms
\[
\Gamma_\fp X \cong \Gamma_\fp\one\otimes X.
\]
Following \cite[\S5]{Benson/Iyengar/Krause:2008a}, we define the \emph{support} of an object $X$ in $\sfT$ to be
\[ 
\supp_RX=\{\fp\in\Spec R\mid\Gamma_\fp X \ne 0\}. 
\]
If the $R$-module $\Hom^{*}_{\sfT}(C,X)$ is finitely generated for
some compact generator $C$ for $\sfT$ and $\fa$ is its annihilator
ideal, then $\supp_{R}X$ equals the Zariski-closed subset $\mcV(\fa)$;
see \cite[Theorem 5.5(1)]{Benson/Iyengar/Krause:2008a}.

The notion of support defined above is an analogue of the one for modular representations of a finite group $G$ given in Benson, Carlson and Rickard \cite{Benson/Carlson/Rickard:1996a}. Indeed, if $\sfT=\StMod(kG)$ and $\fp$ is a non-maximal prime in $H^*(G,k)$, then $\Gamma_\fp\one$ is the kappa module for $\fp$ defined in \cite{Benson/Carlson/Rickard:1996a}; see \cite[\S10]{Benson/Iyengar/Krause:2008a}.

The following theorem is the ``local-global principle'', studied in
\cite{Benson/Iyengar/Krause:bik2} for general triangulated categories.

\begin{theorem}
\label{th:local-global}
Let $X$ be an object in $\sfT$. Then
\[ 
X \in \Loc^\otimes(\{\Gamma_\fp X \mid \fp \in \supp_RX\}). 
\]
\end{theorem}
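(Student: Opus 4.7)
The plan is to establish the stronger statement that for every specialisation closed subset $\mcV\subseteq \Spec R$,
\[
\Gamma_\mcV X \in \Loc^{\otimes}(\{\Gamma_\fq X : \fq \in \mcV\}).
\]
Taking $\mcV = \Spec R$, for which $L_\mcV = 0$ and hence $\Gamma_\mcV X \cong X$, and discarding those $\Gamma_\fq X$ with $\fq \notin \supp_R X$ (which vanish by definition of support), immediately yields the theorem.

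The first reduction pulls an arbitrary $\mcV$ back to its constituent closed subsets. Writing $\mcV = \bigcup_{\fq \in \mcV}\mcV(\fq)$ and applying Proposition~\ref{pr:loc-pr}(2) to a compact generator $C$ of $\sfT$ gives $\Gamma_\mcV \one \in \sfT_\mcV = \Loc(\{\Gamma_{\mcV(\fq)}C : \fq \in \mcV\})$. Since $-\otimes X$ is exact and preserves coproducts, and since $\Gamma_{\mcV(\fq)}C \otimes X \cong C \otimes \Gamma_{\mcV(\fq)}X$, tensoring with $X$ yields
\[
\Gamma_\mcV X \;\cong\; \Gamma_\mcV \one \otimes X \;\in\; \Loc^{\otimes}(\{\Gamma_{\mcV(\fq)} X : \fq \in \mcV\}).
\]
Thus it suffices to show that $\Gamma_{\mcV(\fp)}X \in \Loc^{\otimes}(\{\Gamma_\fq X : \fq \in \mcV(\fp)\})$ for every prime $\fp$.

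The second reduction isolates a single prime via a triangle. Set $\mcW = \mcV(\fp)\setminus\{\fp\} = \bigcup_{\fq\supsetneq \fp}\mcV(\fq)$, which is specialisation closed. Applying $\Gamma_{\mcV(\fp)}$ to the localisation triangle of $X$ with respect to $\mcW$, and using that $\Gamma_{\mcV(\fp)}\Gamma_\mcW = \Gamma_\mcW$ (because $\mcW\subseteq \mcV(\fp)$) together with $\Gamma_{\mcV(\fp)}L_\mcW = \Gamma_\fp$ from the defining property of $\Gamma_\fp$, produces an exact triangle
\[
\Gamma_\mcW X \to \Gamma_{\mcV(\fp)}X \to \Gamma_\fp X \to .
\]
Hence $\Gamma_{\mcV(\fp)}X$ lies in the desired tensor ideal as soon as $\Gamma_\mcW X$ does.

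These two reductions combine by Noetherian induction on primes $\fp$ ordered by \emph{reverse} inclusion, i.e.\ $\fp \preceq \fp'$ iff $\fp \supseteq \fp'$. The base case is $\fp$ maximal in $\Spec R$: then $\mcW = \emptyset$, so $\Gamma_\mcW X = 0$ and the triangle gives $\Gamma_{\mcV(\fp)}X \cong \Gamma_\fp X$. For the inductive step, the first reduction applied to $\mcW$ gives $\Gamma_\mcW X \in \Loc^{\otimes}(\{\Gamma_{\mcV(\fq)}X : \fq \in \mcW\})$; for each such $\fq\supsetneq\fp$ the induction hypothesis replaces $\Gamma_{\mcV(\fq)}X$ by primes in $\mcV(\fq)\subseteq\mcW$, whence the triangle above closes the step. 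The main obstacle, as I see it, is recognising the right induction parameter: the poset of specialisation closed subsets of $\Spec R$ has no useful chain condition, whereas $(\Spec R,\preceq)$ is well-founded precisely because $R$ is Noetherian (no infinite ascending chain of primes). Once this is set up, everything else is bookkeeping with Proposition~\ref{pr:loc-pr} and the compatibilities of $\Gamma_\mcV$, $L_\mcV$ and $\Gamma_\fp$ with $\otimes$.
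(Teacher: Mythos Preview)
Your proof is correct and follows essentially the same approach as the paper: both use the triangle $\Gamma_{\mcW}(-)\to\Gamma_{\mcV(\fp)}(-)\to\Gamma_\fp(-)\to$ with $\mcW=\mcV(\fp)\setminus\{\fp\}$, invoke Proposition~\ref{pr:loc-pr}(2) to control $\Gamma_{\mcW}$, and appeal to noetherianity of $R$ for the induction. The only cosmetic differences are that the paper reduces first to the case $X=\one$ and phrases the noetherian argument as a maximal-counterexample contradiction rather than as explicit well-founded induction on $(\Spec R,\supseteq)$.
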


\begin{proof}
It suffices to prove that $\one$ is in the subcategory
\[ 
\sfC=\Loc^\otimes\{\Gamma_\fp\one\mid\fp\in\Spec R\}. 
\]
Indeed, one then gets the desired result by tensoring with $X$, keeping in mind that if $\fp\not\in\supp_RX$ then $\Gamma_\fp\one\otimes X=\Gamma_\fp X=0$.

Proposition~\ref{pr:loc-pr}(1) implies that the set
\[ 
\mcV=\{\fp\in\Spec R \mid \Gamma_{\mcV(\fp)}\one\in\sfC\}. 
\]
is specialisation closed. We claim that $\mcV=\Spec R$. If not, then
since the ring $R$ is noetherian there exists a prime $\fq$ maximal in
$\Spec R\setminus \mcV$. Setting $\mcZ=\mcV(\fq)\setminus\{\fq\}$ one
gets an exact triangle
\[ 
\Gamma_{\mcZ}\one \to \Gamma_{\mcV(\fq)}\one \to \Gamma_\fq\one\to. 
\]
Since $\mcZ\subseteq\mcV$, it follows from
Proposition~\ref{pr:loc-pr}(2) that
\begin{align*} 
\Gamma_{\mcZ}\one\in\sfT_\mcZ\subseteq\sfT_\mcV
=\Loc\{\Gamma_{\mcV(\fp)}C\mid
\fp\in\mcV\}\subseteq\Loc^\otimes\{\Gamma_{\mcV(\fp)}\one\mid
\fp\in\mcV\}\subseteq\sfC,
\end{align*}
where $C$ denotes any compact generator of $\sfT$.  By definition,
$\Gamma_\fq\one$ is in $\sfC$ so it follows from the triangle above that
$\Gamma_{\mcV(\fq)}\one$ is also in it. This contradicts the choice of
$\fq$. Therefore $\mcV=\Spec R$ holds. It remains to note that
\[
\one = \Gamma_{\Spec R}\one \in \sfC
\]
where the equality holds because $\Gamma_{\Spec R}$ is the identity on $\sfT$, since $\sfT_{\Spec R}=\sfT$.
\end{proof}

\subsection*{Stratification} \ \medskip

\noindent
Let $\sfT$ be an $R$-linear tensor triangulated category. For each $\fp$ in $\Spec R$, the full subcategory $\Gamma_\fp\sfT$ is tensor ideal and localising. It consists of objects $X$ in $\sfT$ with the property that the $R$-module $\Hom^{*}_{\sfT}(C,X)$ is $\fp$-local and $\fp$-torsion, where $C$ is some compact generator for $\sfT$; see \cite[Corollary 4.10]{Benson/Iyengar/Krause:2008a}. 

We say that $\sfT$ is \emph{stratified by} $R$ if $\Gamma_\fp\sfT$ is either zero or minimal among tensor ideal localising subcategories for each $\fp$ in $\Spec R$. 

Given a localising subcategory $\sfC$ of $\sfT$ and a subset $\mcV$ of $\Spec R$, we write 
\begin{align*}
\sigma(\sfC)&=\supp_R\sfC=\{\fp\in\Spec R\mid\Gamma_\fp\sfC\ne 0\}\\
\tau(\mcV) &=\{X\in\sfT\mid \supp_RX\subseteq \mcV\}. 
\end{align*}
It follows from \cite[\S8]{Benson/Iyengar/Krause:2008a} that the subcategory $\tau(\mcV)$ is tensor ideal and localising, so these are maps
\begin{equation}
\label{eq:classification} 
\xymatrix{\{\text{\rm tensor ideal localising subcategories of $\sfT$}\}
\ar@<.5ex>[r]^(.64)\sigma &
\{\text{subsets of $\supp_R\sfT$}\}.\ar@<.5ex>[l]^(.36)\tau}
\end{equation}

The theorem below is the reason stratifications are relevant to this work.

\begin{theorem}
\label{th:transfer-strat}
If $\sfT$ is stratified by $R$, then $\sigma$ and $\tau$ are mutually inverse bijections 
between the tensor ideal localising subcategories of $\sfT$ and subsets of $\supp_R\sfT$. 
\end{theorem}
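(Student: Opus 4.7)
The plan is to verify that the two composites $\sigma\tau$ and $\tau\sigma$ are the identity. For $\sigma\tau=\mathrm{id}$ on subsets $\mcV\subseteq\supp_R\sfT$: the inclusion $\sigma\tau(\mcV)\subseteq\mcV$ is immediate because any $X\in\tau(\mcV)$ satisfies $\supp_R X\subseteq\mcV$, so $\Gamma_\fp\tau(\mcV)\ne 0$ forces $\fp\in\mcV$. For the reverse inclusion, if $\fp\in\mcV\cap\supp_R\sfT$ pick $X\in\sfT$ with $\Gamma_\fp X\ne 0$; since $\supp_R(\Gamma_\fp X)\subseteq\{\fp\}\subseteq\mcV$, the object $\Gamma_\fp X$ lies in $\tau(\mcV)$ and is nonzero, so $\fp\in\sigma\tau(\mcV)$. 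This uses no hypothesis beyond the basic properties of support.

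For $\tau\sigma=\mathrm{id}$ on tensor ideal localising subcategories, the containment $\sfC\subseteq\tau\sigma(\sfC)$ is trivial from the definitions. The substance of the proof is the other direction. The plan is: given $Y\in\tau\sigma(\sfC)$, to exhibit $Y$ as a member of $\sfC$ by invoking the local-global principle (Theorem~\ref{th:local-global}), which gives
\[
Y\in\Loc^\otimes\bigl(\{\Gamma_\fp Y\mid \fp\in\supp_R Y\}\bigr).
\]
Since $\sfC$ is tensor ideal and localising, it suffices to show that $\Gamma_\fp Y\in\sfC$ for every $\fp\in\supp_R Y$.

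Fix such a $\fp$. Because $\supp_R Y\subseteq\sigma(\sfC)=\supp_R\sfC$, there exists $X\in\sfC$ with $\Gamma_\fp X\ne 0$. The natural isomorphism $\Gamma_\fp X\cong \Gamma_\fp\one\otimes X$ and the fact that $\sfC$ is tensor ideal ensure $\Gamma_\fp X\in\sfC$. Consequently the tensor ideal localising subcategory $\Loc^\otimes(\Gamma_\fp X)$ is contained in $\sfC\cap\Gamma_\fp\sfT$, and it is nonzero. Here is where the stratification hypothesis enters decisively: $\Gamma_\fp\sfT$ is minimal among nonzero tensor ideal localising subcategories of $\sfT$, so we must have $\Gamma_\fp\sfT=\Loc^\otimes(\Gamma_\fp X)\subseteq\sfC$. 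In particular $\Gamma_\fp Y\in\Gamma_\fp\sfT\subseteq\sfC$, which is what we needed.

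The main obstacle, and the only place stratification is used, is the last step: producing the containment $\Gamma_\fp\sfT\subseteq\sfC$ for each $\fp\in\supp_R\sfC$. Once this is in hand, the local-global principle assembles the local data $\{\Gamma_\fp Y\}$ back into $Y$ and the argument concludes. It is worth noting that the proof actually establishes a slightly stronger statement: the bijection sends the tensor ideal localising subcategory $\sfC$ to $\supp_R\sfC$, with inverse sending $\mcV$ to $\{X:\supp_R X\subseteq\mcV\}$, and every $\sfC$ is reconstructed as $\Loc^\otimes\{\Gamma_\fp\one\otimes X\mid \fp\in\sigma(\sfC),\, X\in\sfC\}$.
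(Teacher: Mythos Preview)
Your proof is correct and follows essentially the same route as the paper's: both use the local-global principle (Theorem~\ref{th:local-global}) to reduce to showing $\Gamma_\fp\sfT\subseteq\sfC$ for each $\fp\in\supp_R\sfC$, and both obtain this containment from the minimality hypothesis. The paper phrases the key step more tersely as $\Gamma_\fp\sfT=\Gamma_\fp\sfC\subseteq\sfC$, whereas you spell it out via $\Loc^\otimes(\Gamma_\fp X)$ for a witnessing $X\in\sfC$; the content is the same.
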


\begin{proof}
It is clear that $\sigma\tau(\mcV)=\mcV$ for any subset $\mcV$ of $\supp_R\sfT$ and that $\sfC\subseteq\tau\sigma(\sfC)$ for any tensor ideal localising subcategory $\sfC$. It remains to check that $\tau\sigma(\sfC)\subseteq\sfC$. 

For each $\fp\in\supp_R \sfC$, minimality of $\Gamma_{\fp}\sfT$ implies the equality below
\[
\Gamma_\fp\sfT=\Gamma_\fp\sfC\subseteq \sfC
\]
while the inclusion holds because $\Gamma_{\fp}X\cong \Gamma_{\fp}\one\otimes X$. For any $X$ in $\tau\sigma(\sfC)$ one has $\supp_RX\subseteq\sigma(\sfC)$, so Theorem~\ref{th:local-global} and the inclusions above imply $X\in\sfC$.
\end{proof}

The following characterisation of minimality is often useful.

\begin{lemma}
\label{le:loc-minimal}
Let $\sfT$ be a tensor triangulated category and $\sfC$ a  non-zero tensor ideal localising subcategory of $\sfT$. The following statements are equivalent:
\begin{enumerate}[\quad\rm(a)]
\item The tensor ideal localising subcategory $\sfC$ is minimal. 
\item For all non-zero objects $X$ and $Y$ in $\sfC$ there exists an object $Z$ in $\sfT$ such that $\Hom^*_\sfT(X\otimes Z,Y)\ne 0$ holds.
\item If $C$ is a compact generator for $\sfT$, then all non-zero objects $X$ and $Y$ in $\sfC$ satisfy $\Hom^*_\sfT(X\otimes C,Y)\ne 0$.
\end{enumerate}
\end{lemma}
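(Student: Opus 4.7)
The plan is to prove the cyclic chain $(a)\Rightarrow(c)\Rightarrow(b)\Rightarrow(a)$. The implication $(c)\Rightarrow(b)$ is immediate: take $Z=C$.

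For $(a)\Rightarrow(c)$, suppose (c) fails, so there exist non-zero $X,Y\in\sfC$ with $\Hom^{*}_{\sfT}(X\otimes C,Y)=0$. Since $X\otimes-$ is exact and preserves coproducts while $\Hom^{*}_{\sfT}(-,Y)$ is cohomological and converts coproducts into products, the subcategory $\{Z\in\sfT\mid\Hom^{*}_{\sfT}(X\otimes Z,Y)=0\}$ is a localising subcategory of $\sfT$; since it contains the compact generator $C$, it must equal all of $\sfT$. Then define
\[
\sfC_{0}=\{W\in\sfC\mid\Hom^{*}_{\sfT}(W\otimes Z,Y)=0\text{ for every }Z\in\sfT\}.
\]
Using the associativity of $\otimes$ and the fact that $\sfC$ is tensor ideal, one checks that $\sfC_{0}$ is a tensor ideal localising subcategory of $\sfC$; by the previous step, $X\in\sfC_{0}$. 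Minimality then forces $\sfC_{0}=\sfC$, so $Y\in\sfC_{0}$, and specialising to $Z=\one$ yields $\Hom^{*}_{\sfT}(Y,Y)=0$, which gives $Y=0$, a contradiction.

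For $(b)\Rightarrow(a)$, let $\sfC'\subseteq\sfC$ be a non-zero tensor ideal localising subcategory and pick $0\neq X\in\sfC'$, so that $\Loc^{\otimes}(X)\subseteq\sfC'$. The first key step is the identification $\Loc^{\otimes}(X)=\Loc(X\otimes C)$: the full subcategory $\{Z\in\sfT\mid X\otimes Z\in\Loc(X\otimes C)\}$ is localising in $\sfT$ and contains $C$, hence equals $\sfT$; running the analogous argument in the first slot then shows $\Loc(X\otimes C)$ is tensor ideal in $\sfT$ and contains $X=X\otimes\one$. Consequently $\Loc^{\otimes}(X)$ is, as a plain localising subcategory of $\sfT$, generated by the single object $X\otimes C$. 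Since $\sfT$ is compactly generated, Bousfield localisation yields a natural triangle $\Gamma Y\to Y\to LY\to$ with $\ker L=\Loc^{\otimes}(X)$ and $LY$ right orthogonal to $\Loc^{\otimes}(X)$. For $Y\in\sfC$ we have $\Gamma Y\in\Loc^{\otimes}(X)\subseteq\sfC$, hence $LY\in\sfC$. If $LY\neq 0$, then (b) applied to the pair $(X,LY)$ produces $Z\in\sfT$ with $\Hom^{*}_{\sfT}(X\otimes Z,LY)\neq 0$; but $X\otimes Z\in\Loc^{\otimes}(X)=\ker L$, while $LY$ is right orthogonal to $\ker L$, a contradiction. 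Hence $LY=0$, so $Y\cong\Gamma Y\in\Loc^{\otimes}(X)\subseteq\sfC'$, giving $\sfC=\sfC'$.

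The main obstacle is $(b)\Rightarrow(a)$: concretely, reducing the tensor ideal localising subcategory $\Loc^{\otimes}(X)$ to the plain localising subcategory $\Loc(X\otimes C)$ generated by a single object, and then invoking Bousfield localisation for set-generated subcategories of a compactly generated category. Everything else is formal manipulation with cohomological functors and the compact generator $C$.
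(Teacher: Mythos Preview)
Your proof is correct and uses essentially the same ideas as the paper's. The paper organises the argument as $(a)\Rightarrow(b)$, $(b)\Leftrightarrow(c)$, $(c)\Rightarrow(a)$, which buys a small economy: in proving $(c)\Rightarrow(a)$ one only needs the inclusion $\Loc(X\otimes C)\subseteq\sfC'$ (immediate since $\sfC'$ is tensor ideal) rather than the full identification $\Loc^{\otimes}(X)=\Loc(X\otimes C)$ that your route through $(b)\Rightarrow(a)$ requires. Both arguments hinge on the same key step, namely invoking Bousfield localisation with kernel $\Loc(X\otimes C)$ via Neeman's result for set-generated localising subcategories of a compactly generated category.
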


\begin{proof}
(a) $\Rightarrow$ (b): Let $X$ be a non-zero object in
$\sfC$. Minimality of $\sfC$ implies that $\Loc^{\otimes}(X)=\sfC$.
Therefore, if there exists $Y$ in $\sfC$ such that for all $Z$ in
$\sfT$ we have $\Hom^*_\sfT(X\otimes Z,Y)=0$, then in particular 
$\Hom^*_\sfT(Y,Y)=0$ and hence $Y=0$.

(b) $\Leftrightarrow$ (c): This is clear, since $\Loc(C)=\sfT$ for any generator $C$.

(c) $\Rightarrow$ (a): Suppose that $\sfC$ is not minimal, so that it
contains a non-zero proper tensor ideal localising subcategory, say
$\sfC'$. Let $X$ be a non-zero object in $\sfC'$. It follows from
\cite[Corollary 4.4.3]{Neeman:2001a} that there is a localisation
functor with kernel $\Loc(X\otimes C)$, so that for each object $W$ in
$\sfT$ there is a triangle $W'\to W\to W''\to$ with $W'\in\sfC'$ and
$\Hom_\sfT^*(X\otimes C,W'')=0$. Pick an object $W$ in $\sfC$ but not
in $\sfC'$ and set $Y=W''$. Since $W'$ and $W$ are in $\sfC$, so is
$Y$ and $W$ is not in $\sfC'$ one gets $Y\ne 0$. Finally, we have
$\Hom^*_\sfT(X\otimes C,Y)=0$, a contradiction.
\end{proof}

We wish to transfer stratifications from one tensor triangulated
category to another. In particular, we need to change the tensor
product on a fixed triangulated category.  It turns out to be
inconvenient to have to keep track of the map from $R$ into the graded
centre of $\sfT$. So we formulate the following principle.

\begin{lemma}
\label{le:2actions}
Let $\sfT$ be a triangulated category admitting two tensor
triangulated structures with the same unit object, $\one$, and let
$\phi,\phi'\col R \to Z^*\sfT$ be two actions.  If $\one$ generates
$\sfT$ and the maps $\phi_\one,\phi_\one'\col R \to \End_\sfT^*(\one)$
agree, then $\sfT$ is stratified by $R$ through $\phi$ if  it is
stratified by $R$ through $\phi'$.
\end{lemma}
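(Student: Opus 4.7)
The plan is to invoke Corollary~\ref{cor:2actions}, combined with the observation that, under the hypothesis that $\one$ generates $\sfT$, the tensor structure becomes invisible to the notion of stratification.

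First I would note that $\one$ is compact (this is part of the axioms of a tensor triangulated category) and by assumption generates $\sfT$, so $\one$ is a compact generator. Since $\phi_\one = \phi_\one'$ by hypothesis, Corollary~\ref{cor:2actions} applied to this common compact generator ensures that the functors $\Gamma_\mcV$ and $L_\mcV$, and therefore also $\Gamma_\fp$, defined through $\phi$ agree with those defined through $\phi'$. In particular the support $\supp_R X$ of any object $X$, and each full subcategory $\Gamma_\fp\sfT$, are independent of the choice of action.

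Next I would verify that because $\one$ generates $\sfT$, every localising subcategory of $\sfT$ is automatically tensor ideal with respect to either of the two tensor products. Given a localising subcategory $\sfC$ and $X \in \sfC$, the full subcategory
\[
\{Y \in \sfT \mid X \otimes Y \in \sfC\}
\]
is localising (since $X \otimes -$ is exact and preserves coproducts) and contains $\one$ (because $X \otimes \one \cong X$), so it equals $\Loc(\one)=\sfT$. The same argument applies verbatim to the alternative tensor product. Hence ``tensor ideal localising subcategory" means simply ``localising subcategory" for both tensor structures, and this class of subcategories does not depend on the tensor structure at all.

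Combining the two steps, the condition ``$\Gamma_\fp\sfT$ is zero or minimal among tensor ideal localising subcategories of $\sfT$" refers neither to the chosen action (the subcategories $\Gamma_\fp\sfT$ are action-independent by the first step) nor to the chosen tensor product (the class of competitor subcategories is tensor-structure-independent by the second step). Therefore $\sfT$ is stratified by $R$ through $\phi$ precisely when it is stratified by $R$ through $\phi'$. There is no real obstacle in the argument; the one point that must not be overlooked, and which would fail without the assumption that $\one$ is a generator, is the collapse of ``tensor ideal localising" to ``localising" in the second step.
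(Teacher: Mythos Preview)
Your proof is correct and follows essentially the same approach as the paper's: invoke Corollary~\ref{cor:2actions} to see that the functors $\Gamma_\fp$ are independent of the action, and use the hypothesis that $\one$ generates $\sfT$ to conclude that every localising subcategory is automatically tensor ideal for either tensor product. The paper's proof compresses this into two sentences, whereas you have spelled out the second step in full; no substantive difference.
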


\begin{proof}
It follows from Corollary~\ref{cor:2actions} that $\Gamma_\fp X$
defined through $\phi$ and $\phi'$ agree. This justifies the claim,
since localising subcategories are tensor ideal.
\end{proof}

We require also a change of rings result for stratifications.

\begin{lemma}
\label{le:2rings}
Let $\sfT$ be an $R$-linear tensor triangulated category and $\phi\col Q\to R$ a homomorphism of rings.
If $\sfT$ is stratified by the induced action of $Q$, then it is stratified by $R$.
\end{lemma}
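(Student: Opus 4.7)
The plan is to compare the local cohomology functors coming from the two actions and then transport minimality. For $\fp \in \Spec R$, set $\fq := \phi^{-1}(\fp) \in \Spec Q$, and write $\Gamma_\fp^R$ and $\Gamma_\fq^Q$ for the functors defined via the $R$- and $Q$-actions on $\sfT$, respectively. I aim to prove first that $\Gamma_\fp^R\sfT \subseteq \Gamma_\fq^Q\sfT$, and then deduce minimality of $\Gamma_\fp^R\sfT$ (when nonzero) from $Q$-stratification.

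To establish the inclusion, I would fix a compact generator $C$ of $\sfT$ and use the characterisation recalled before the definition of stratification: $\Gamma_\fp^R\sfT$ consists of those $X$ for which $M := \Hom^*_\sfT(C,X)$ is $\fp$-local and $\fp$-torsion as an $R$-module, and similarly with $Q$ and $\fq$ in place of $R$ and $\fp$. Since the $Q$-action on $\sfT$ is by construction the composition of $\phi$ with the $R$-action, the $Q$-module structure on $M$ is restriction of scalars along $\phi$. The verification is routine: $\phi$ sends $Q \setminus \fq$ into $R \setminus \fp$, so any $s \in Q \setminus \fq$ already acts invertibly on $M$, giving $\fq$-locality; and $\phi(\fq^n) \subseteq \fp^n$, so if $\fp^n$ annihilates an element of $M$ then so does $\fq^n$, giving $\fq$-torsion. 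Hence $X \in \Gamma_\fq^Q\sfT$, as desired.

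Now suppose $\Gamma_\fp^R\sfT \neq 0$. By the inclusion above, $\Gamma_\fq^Q\sfT \neq 0$, so by the hypothesis of $Q$-stratification $\Gamma_\fq^Q\sfT$ is minimal among tensor ideal localising subcategories of $\sfT$. If $\sfC'$ is a nonzero tensor ideal localising subcategory of $\sfT$ contained in $\Gamma_\fp^R\sfT$, then $\sfC' \subseteq \Gamma_\fq^Q\sfT$, and minimality forces $\sfC' = \Gamma_\fq^Q\sfT \supseteq \Gamma_\fp^R\sfT$, hence $\sfC' = \Gamma_\fp^R\sfT$. Thus $\Gamma_\fp^R\sfT$ is itself minimal, which is exactly $R$-stratification.

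The only real step is the first one: translating the definition of $\Gamma_\fp^R\sfT$ (via the localisation functors built from specialisation closed subsets of $\Spec R$) into the module-theoretic description and checking that it is stable under restriction of scalars along $\phi$. Once that compatibility is in hand, $R$-stratification follows formally, because any tensor ideal localising subcategory of $\sfT$ sitting inside $\Gamma_\fp^R\sfT$ automatically sits inside $\Gamma_\fq^Q\sfT$, where minimality has already been assumed.
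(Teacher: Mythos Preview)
Your proof is correct and follows exactly the approach of the paper: set $\fq=\phi^{-1}(\fp)$, observe that $\fp$-local and $\fp$-torsion over $R$ implies $\fq$-local and $\fq$-torsion over $Q$ (hence $\Gamma_\fp^R\sfT\subseteq\Gamma_\fq^Q\sfT$), and then invoke minimality of $\Gamma_\fq^Q\sfT$. The paper's proof simply declares both steps ``straightforward'' and ``obvious'' where you spell them out.
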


\begin{proof}
Fix a prime $\fp$ in $\Spec R$ and set $\fq = \phi^{-1}(\fp)$. It is straightforward to verify that if an object $X$ in $\sfT$ is $\fp$-torsion and $\fp$-local for the $R$-action, then it is $\fq$-torsion and $\fq$-local, for the $Q$-action. The claim is now obvious.
\end{proof}

\section{Graded-injective \dg modules}
\label{se:gr-inj}

This section concerns a certain homotopy category of \dg modules over
a \dg algebra. The development is based on the work of Avramov, Foxby,
and Halperin~\cite{Avramov/Foxby/Halperin:dga}, which is also our
general reference for this material. The main results we prove here,
Theorems~\ref{th:kinj-generator} and \ref{th:strat-descent}, play a
critical role in \S\S\ref{se:bgg}--\ref{se:A-to-kE} and are
tailored for ready use in them; they are not the best one can do in
that direction.

\begin{hyp}
\label{hyp:A}
Let $A$ be a \dg algebra over a field $k$ with the following properties.
\begin{enumerate}[\quad\rm(1)]
\item $A^i=0$ for $i>0$ and $A$ is finite dimensional over $k$.
\item $A^{0}$ is a local ring with residue field $k$.
\item $A$ has a structure of a cocommutative \dg Hopf $k$-algebra.
\end{enumerate}
\end{hyp}

For the definition of a \dg Hopf algebra see \cite[\S21]{Felix/Halperin/Thomas:2001a}. The main consequence of the Hopf structure used in our work is that there is an isomorphism of \dg $A$-modules
\[
\Hom_{k}(A,k)\cong \Sigma^{d} A\quad\text{for some integer $d$.}
\]
This isomorphism can be verified as in
\cite[\S3.1]{Benson:1991a}. Note that $A^{0}$ is also a cocommutative
Hopf algebra and the inclusion $A^{0}\subseteq A$ is compatible with
the Hopf structure. In particular, $A$ is free as a graded module over
$A^{0}$; see \cite[\S3.3]{Benson:1991a}.

We write $\nat A$ for the graded algebra underlying $A$. When $M$ is a
\dg $A$-module, $\nat M$ is the underlying graded $\nat
A$-module. Following \cite{Avramov/Foxby/Halperin:dga}, we say that a
\dg $A$-module $I$ is \emph{graded-injective} if $\nat I$ is an
injective object in the category of graded $\nat A$-modules. Under
Hypothesis~\ref{hyp:A} this condition is equivalent to saying that
$\nat I$ is a graded free $\nat A$-module. We write $\KInj A$ for the
homotopy category of graded-injective \dg $A$-modules. The objects of
this category are graded-injective \dg $A$-modules and morphisms
between such \dg modules are identified if they are homotopic. The
category $\KInj A$ is given the usual structure of a triangulated
category, where the exact triangles correspond to exact sequences of
graded-injective \dg modules. This is analogous to the case of the
homotopy category of complexes over a ring, as described for instance
in Verdier's article in SGA4$\scriptstyle\frac{1}{2}$
\cite{Deligne:1977a}; see also \cite{Verdier:1996a}.

\subsection*{$\KInj A$ is compactly generated}\ \medskip

\noindent
A \dg $A$-module $I$ is said to be \emph{semi-injective} if it is
graded-injective and the functor $\Hom_{\sfK}(-,I)$, where $\sfK$ is
the homotopy category of \dg $A$-modules, takes quasi-isomorphisms to
isomorphisms. Every \dg module $X$ admits a \emph{semi-injective
resolution}: a quasi-isomorphism $X \to I$ of \dg $A$-modules with $I$
semi-injective.

\begin{lemma}
\label{le:injectives-filtration}
Let $A$ be a \dg algebra satisfying Hypothesis~\emph{\ref{hyp:A}}. Each graded-injective \dg $A$-module $I$
has a family $\{I(n)\}_{n\in\Z}$ of \dg submodules with the properties below.
\begin{enumerate}[\quad\rm(1)]
\item For each integer $n$, one has $I(n-1)\subseteq I(n)$. 
\item For each integer $i$, there exists an $n_i$ with $I(n_i)^{\ges i}=I^{\ges i}$.
\item For each integer $n$, the \dg module $I(n)$ is semi-injective.
\end{enumerate}
When $I^{i}=0$ for $i\ll 0$ the \dg $A$-module $I$ is semi-injective.
\end{lemma}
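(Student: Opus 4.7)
The plan is to build the filtration $\{I(n)\}$ explicitly from a homogeneous $\nat A$-basis of $\nat I$, verify (1) and (2) by direct degree inspection, and reduce both (3) and the final statement to the single key claim that any bounded-below graded-injective dg $A$-module is semi-injective.

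For the construction, Hypothesis~\ref{hyp:A}(3) gives $\Hom_k(\nat A,k)\cong\Sigma^d\nat A$, so $\nat A$ is self-injective as a graded algebra and every graded-injective $\nat A$-module is graded-free. I would fix a homogeneous $\nat A$-basis $\{e_\alpha\}$ of $\nat I$ with $n_\alpha:=|e_\alpha|$, and define $I(n)$ to be the graded $\nat A$-submodule of $\nat I$ generated by $\{e_\alpha:n_\alpha\ges -n\}$. Since $A^i=0$ for $i>0$ by Hypothesis~\ref{hyp:A}(1), expanding $d(e_\alpha)=\sum_\beta e_\beta a_\beta$ forces $|a_\beta|\le 0$ together with $n_\beta+|a_\beta|=n_\alpha+1$, so $n_\beta\ges n_\alpha+1\ges -n+1$; hence $d$ preserves the generating set of $I(n)$, making it a dg $A$-submodule. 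If $A$ has degrees in $[-d,0]$, every element of $I(n)$ sits in degree $\ges -n-d$, so $I(n)$ is bounded below.

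Property (1) follows from the inclusion of generator sets. For (2), set $n_i:=-i$; if $\sum_\alpha e_\alpha a_\alpha\in I^{\ges i}$ with $|a_\alpha|\le 0$, then $n_\alpha\ges i=-n_i$, so $I^{\ges i}\subseteq I(n_i)$ and $I(n_i)^{\ges i}=I^{\ges i}$. The same observation applied to an $I$ that is bounded below by $i_0$ yields $I=I(-i_0)$, so the final statement of the lemma becomes a special case of (3).

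The heart of the argument is the key claim: every bounded-below graded-injective dg $A$-module $J$ (say with $J^i=0$ for $i<m$) is semi-injective. My plan is to show that $\Hom_A^*(X,J)$ is acyclic whenever $X$ is an acyclic dg $A$-module, by constructing for each closed $f\in\Hom_A^0(X,J)$ a graded $\nat A$-linear homotopy $s\in\Hom_A^{-1}(X,J)$ with $d_Js+sd_X=f$. One starts with $s^n=0$ for $n\le m$ (forced by $J^{n-1}=0$) and proceeds by induction on the cohomological degree: the defining equation determines $s$ on the image $d_X(X^n)\subseteq X^{n+1}$, and a short calculation using acyclicity of $X$, the fact that $f$ is a dg map, and the inductive identity at the previous degree shows this is well-defined; graded-injectivity of $\nat J$ then extends $s$ at each step to a graded $\nat A$-linear map. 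The main technical obstacle is that the $\nat A$-action lowers degrees while $d_X$ raises them, so the brutal truncations of $X$ are never dg $A$-submodules; the inductive extension must therefore be organised on the underlying graded $\nat A$-module in a manner that delivers a homotopy $s$ which is $\nat A$-linear on the nose. This is the standard semi-injectivity criterion for bounded-below graded-injective dg modules developed in~\cite{Avramov/Foxby/Halperin:dga}.
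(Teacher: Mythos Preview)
Your construction of the filtration $\{I(n)\}$ from a homogeneous $\nat A$-basis and the verification of (1) and (2) match the paper's argument essentially verbatim, and you correctly identify that (3) and the final sentence both reduce to the single claim that any bounded-below graded-injective \dg $A$-module is semi-injective.

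The divergence is in how that claim is proved. The paper does \emph{not} attempt a degree-by-degree homotopy construction. Instead it writes a bounded-below $I$ as an inverse limit of quotients $I/I(n)$, each of which has only finitely many basis-degrees; an induction on that number reduces to a free \dg module $A\otimes_k V$ with $V$ concentrated in one degree; then the Hopf self-duality $A\cong\Sigma^d\Hom_k(A,k)$ identifies $A\otimes_k V$ with a shift of $\Hom_k(A,V)$, which is semi-injective by the adjunction $\Hom_A(-,\Hom_k(A,V))\cong\Hom_k(-,V)$. So self-duality, coming from Hypothesis~\ref{hyp:A}(3), is used in an essential way.

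Your sketch, by contrast, never invokes self-duality, and this is where it is genuinely incomplete rather than merely terse. The standard induction you describe works for bounded-below complexes of injectives over an ordinary ring precisely because each extension step involves extending a map from a \emph{submodule} using injectivity. Here $A$ has components in negative degrees, so $\operatorname{im} d_X$ is not a graded $\nat A$-submodule of $X$ (the differential is only a derivation, not $\nat A$-linear, when $d_A\ne 0$), and the brutal truncations you would induct over are not \dg submodules---a point you flag but do not resolve. If one tries to repair this by filtering $J$ by basis-degree instead, the associated graded pieces are free \dg modules $A\otimes_k V$, and one is immediately reduced to showing that $A$ itself is semi-injective; that is exactly where the paper appeals to self-duality. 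Citing \cite{Avramov/Foxby/Halperin:dga} is legitimate, but your outline of the argument is not the one that actually works in this generality.
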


\begin{proof}
Hypothesis~\ref{hyp:A} implies that the $\nat A$-module $\nat I$ is free, so $\nat I = \bigoplus_{j\in\Z} \nat A U^j$, where $U^j$ is the set of basis elements in degree $j$. Set  
\[
I(n)=\bigoplus_{j\ge-n}A U^j.
\]
Since $A^{i}=0$ for $i>0$, by Hypothesis~\ref{hyp:A}, it follows that $d(U^{j})\subseteq I(j-1)$ holds for each $j$, and hence each $I(n)$ is a \dg $A$-submodule of $I$. Conditions (1) and (2) are immediate by construction, and it is evident that each $I(n)$ is graded-injective with $I(n)^{i}=0$ for $i\ll 0$.
It thus remains to verify the last claim of the lemma, for that would also imply that the $I(n)$ are semi-injective.

Assume $I^{i}=0$ for $i\ll 0$ and let $I(n)$ be as above. There are canonical surjections $I\to I/I(n)$ for each $n$ and $I\cong \lim_{n} I/I(n)$ as \dg modules over $A$. It thus suffices to prove that each $I/I(n)$ is semi-injective. So we may assume that $\nat I = \bigoplus_{j\in\Z} \nat A U^j$ with $U^{j}=\varnothing$ for $|j|\gg 0$. By induction on the number of non-empty $U^{j}$, it suffices to consider the case when $I$ is  a free \dg $A$-module. Thus $I$ has the form $A\otimes_{k}V$ with $V$ a graded $k$-vector space with zero differential. The self-duality of $A$ then implies that $I$ is isomorphic to a shift of $\Hom_{k}(A,V)$  and hence semi-injective. 
\end{proof}

The statement below is a special case of a result from
\cite{Avramov/Foxby/Halperin:dga}. We thank the authors for allowing
us to reproduce the proof here.

\begin{lemma}
\label{le:injectives-structure}
Let $A$ be a \dg algebra satisfying Hypothesis~\emph{\ref{hyp:A}}, and let $\fm$ be the dg ideal $\Ker(A\to k)$. Each graded-injective \dg $A$-module $I$ is isomorphic in $\KInj A$ to a graded-injective \dg $A$-module $J$ whose differential satisfies $d(J)\subseteq \fm J$.
\end{lemma}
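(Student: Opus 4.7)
The plan is to exhibit a contractible graded-injective dg $A$-submodule $C \subseteq I$ that is a graded $\nat A$-direct summand; then $J := I/C$ is graded-injective, and the resulting cofiber triangle $C \to I \to J \to$ in $\KInj A$ yields $I \cong J$ there, since $C \cong 0$ in $\KInj A$ (a contractible graded-injective dg module has null-homotopic identity map). The construction will guarantee $d(J) \subseteq \fm J$.

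Under Hypothesis~\ref{hyp:A}, $\nat I$ is a free graded $\nat A$-module, so fix a homogeneous $\nat A$-basis $\tilde V$ of $I$ lifting a $k$-basis $V$ of $I/\fm I$; the differential $d$ descends to a degree-$1$ $k$-linear map $\bar d \colon V \to V$. Choose a $k$-vector space decomposition $V = H \oplus V_{\mathrm{im}} \oplus W$, with $V_{\mathrm{im}} = \mathrm{im}(\bar d)$, $W$ a complement to $\ker(\bar d)$ in $V$ (so that $\bar d|_W \colon W \xrightarrow{\sim} V_{\mathrm{im}}$), and $H$ a complement to $V_{\mathrm{im}}$ inside $\ker(\bar d)$; lift this to a partition $\tilde V = \tilde H \sqcup \tilde V_{\mathrm{im}} \sqcup \tilde W$ of the basis.

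The key step is a Nakayama-type adjustment of the basis. For each $w \in \tilde W$, the element $d(w)$ of $I$ reduces modulo $\fm I$ to $\bar d(w) \in V_{\mathrm{im}}$, so it differs from the chosen lift of $\bar d(w)$ in $\tilde V_{\mathrm{im}}$ by an element of $\fm I$. Replacing each such lift by $d(w)$ gives a new generating set $\tilde V' = \tilde H \sqcup d(\tilde W) \sqcup \tilde W$ for $\nat I$; because $A$ is finite-dimensional the graded maximal ideal $\fm$ is nilpotent, so the change-of-basis matrix — the identity plus an $\fm$-valued perturbation — is invertible, confirming that $\tilde V'$ is again an $\nat A$-basis. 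Set
\[
C := \nat A \cdot \bigl( d(\tilde W) \sqcup \tilde W \bigr).
\]
Then $C$ is a graded $\nat A$-direct summand of $I$ with complement $\nat A \cdot \tilde H$; it is a dg submodule since $d(\tilde w) = d(w) \in C$ and $d(d(w)) = 0$; and it is contractible via the $\nat A$-linear homotopy $h$ defined by $h(d(w)) = w$ for $w \in \tilde W$ and $h|_{\tilde W} = 0$.

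Finally I would verify $d(J) \subseteq \fm J$. The dg Hopf structure on $A$ supplies a counit $\epsilon \colon A \to k$ which is a morphism of dg $k$-algebras, whence $\epsilon \circ d_A = d_k \circ \epsilon = 0$ and so $d(A) \subseteq \ker \epsilon = \fm$. Combined with $d(\tilde h) \in \fm I$ for $\tilde h \in \tilde H$ (which holds because $\bar d(h) = 0$ by choice of $H \subseteq \ker \bar d$), the Leibniz rule yields $d(\nat A \cdot \tilde H) \subseteq \fm I$; passing to the quotient gives $d(J) \subseteq (\fm I + C)/C = \fm J$, as required. The main technical obstacle is the Nakayama-type change-of-basis argument, which depends crucially on the nilpotence of $\fm$ afforded by Hypothesis~\ref{hyp:A}(1); the remaining linear-algebra decomposition of $V$ and the Hopf-theoretic fact $d(A) \subseteq \fm$ are essentially formal.
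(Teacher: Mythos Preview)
Your proof is correct and follows essentially the same strategy as the paper's: both split off a contractible graded-free direct summand so that the quotient $J$ has minimal differential. The paper packages the contractible piece as $\cone(F)$ for a free \dg module $F$ and invokes the lifting property of cones against the surjection $I\to I/\fm I$ to produce the inclusion, whereas you build it explicitly as $\nat A\cdot(d(\tilde W)\sqcup\tilde W)$ via a Nakayama change of basis; these are the same submodule described in two idioms. One small imprecision: when you speak of ``the chosen lift of $\bar d(w)$ in $\tilde V_{\mathrm{im}}$'' you are tacitly taking the basis of $V_{\mathrm{im}}$ to be the $\bar d$-image of your chosen basis of $W$, which is harmless but worth stating. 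Your closing remark that $d(A)\subseteq\fm$ follows from the \dg counit is a nice touch the paper leaves implicit.
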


\begin{proof}
We repeatedly use the fact that graded-injective \dg $A$-modules are graded-free. For any \dg module $F$ we write $\cone(F)$ for the mapping cone of the identity map $F\xra{=}F$. When $F$ is graded-free, $\cone(F)$ has the following lifting property: If $\alpha \col M\to N$ is a surjective morphism of \dg $A$-modules, then for any morphism $\beta\col\cone(F)\to N$ there is a morphism $\gamma\col\cone(F)\to M$ such that $\alpha\gamma=\beta$. This is readily verified by a diagram chase. This observation is used below.

Since $A/\fm A$ is the field $k$, the complex of $k$-vector spaces $I/\fm I$ is isomorphic to $H^{*}(I/\fm I)\oplus \cone(V)$, where $V$ is a graded $k$-vector space with zero differential. Pick a surjective morphism $F\to V$ of \dg $A$-modules with $F$ a free \dg $A$-module and $F/\fm F\cong V$; here $V$ is viewed as a \dg $A$-module via the morphism $A\to k$. One thus gets a surjective morphism $\cone(F)\to\cone(V)$ of \dg $A$-modules. The composed morphism $\cone(F)\to \cone(V) \to I/\fm I$ then lifts to a morphism $\gamma\col \cone(F)\to I$ of \dg $A$-modules. It follows by construction that the map $\gamma\otimes_{A}k$ is injective, which implies that $\gamma$ itself is split-injective. Hence its cokernel, say, $J$, is graded-free. It is also not hard to verify that $d(J)\subseteq \fm J$ holds. Finally, $\cone(F)\cong 0$ in $\KInj A$ and hence $I$ is homotopy equivalent to $J$. 
\end{proof}

The result below is an analogue of \cite[Proposition~2.3]{Krause:2005a} for \dg algebras. In it $\KInjc A$ denotes the full subcategory of compact objects in $\KInj A$. As usual, $\sfD(A)$ stands for the derived category of \dg modules over $A$. We write $\sfD^{\mathsf f}(A)$ for the full subcategory whose objects are \dg modules $M$ such that the $H^*(A)$-module $H^*(M)$ is finitely generated; equivalently, $H^*(M)$ is finite dimensional over $k$.
 
\begin{theorem}
\label{th:kinj-generator}
Let $A$ be a \dg algebra satisfying Hypothesis~\emph{\ref{hyp:A}} and
$\ik$ a semi-injective resolution of the \dg $A$-module $k$. The triangulated
category $\KInj A$ is compactly generated by $\ik$ and the canonical
functor $\KInj A\to \sfD(A)$ restricts to an equivalence
\[ 
\KInjc A\xrightarrow{\, \sim \,}\sfD^{\mathsf f}(A). 
\]
In particular,  $\Thick(\ik)=\KInjc A$ and $\Loc(\ik)=\KInj A$. 
\end{theorem}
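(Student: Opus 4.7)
The plan is to prove, in order, that $\ik$ is compact in $\KInj A$, that $\ik$ generates $\KInj A$ as a localising subcategory, that the compact objects coincide with $\Thick(\ik)$, and finally that the canonical functor $\KInj A \to \sfD(A)$ identifies $\Thick(\ik)$ with $\sfD^{\mathsf f}(A)$. First I would fix $\ik$ to be a bounded-below semi-injective resolution of $k$: such a resolution exists by Lemma~\ref{le:injectives-filtration}, built iteratively from injective hulls over the finite-dimensional graded Frobenius algebra $\nat A$ (which has one-dimensional socle by Hypothesis~\ref{hyp:A}), so that each $\ik^i$ is finite-dimensional over $k$.

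The crucial step is compactness. The Hopf self-duality $\Hom_k(A,k)\cong \Sigma^d A$, combined with cocommutativity, is the dg avatar of the Poincar\'e duality available for finite-dimensional Hopf algebras, and yields a natural identification between $\Hom_A(k,-)$ and a shift of $k\otimes_A(-)$. Combining this with the fact that $\ik$ is a semi-injective resolution of $k$, I would establish a natural isomorphism
\[
\Hom^{*}_{\KInj A}(\ik, X) \cong \Sigma^{-d} H^{*}(k\lotimes_A X)
\]
for every $X\in\KInj A$. The right-hand side visibly preserves coproducts in $X$, so $\ik$ is compact.

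For generation, suppose $X\in\KInj A$ satisfies $\Hom^{*}_{\KInj A}(\ik,X)=0$. By Lemma~\ref{le:injectives-structure} we may replace $X$ by a homotopy equivalent module with $d(X)\subseteq \fm X$; then $k\lotimes_A X \cong X/\fm X$ carries zero induced differential, so the hypothesis forces $X/\fm X=0$. Since $\nat X$ is graded-free over $\nat A$, this means $\nat X=0$, and hence $X\cong 0$ in $\KInj A$. Therefore $\Loc(\ik)=\KInj A$, which together with compactness yields $\KInjc A=\Thick(\ik)$ by Neeman's theorem on compactly generated triangulated categories.

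For the final equivalence, the quotient functor $Q\col\KInj A\to\sfD(A)$ sends $\ik$ to $k$, and semi-injectivity of $\ik$ gives an isomorphism $\Hom_{\KInj A}(\ik,\Sigma^{n}\ik) \xrightarrow{\sim} \Hom_{\sfD(A)}(k,\Sigma^{n}k)$ for each $n$; this propagates to full faithfulness on $\Thick(\ik)$ by a five-lemma argument along triangles. On the target side, $\Thick_{\sfD(A)}(k)=\sfD^{\mathsf f}(A)$ follows by d\'evissage: $H^{*}(A)$ is graded-local and finite-dimensional, so every finite-dimensional $H^{*}(A)$-module has a composition series with subquotients $k$, and the corresponding filtration can be realised in $\sfD(A)$ by iterated mapping cones. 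The main obstacle is the compactness step: $k$ is typically not compact in $\sfD(A)$ when $A$ has infinite global dimension, so the argument cannot be reduced to a statement about $\sfD(A)$ and must genuinely exploit the Hopf structure to convert $\Hom_A(k,-)$ into the coproduct-preserving $k\otimes_A(-)$.
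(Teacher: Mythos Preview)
Your compactness step has a gap. The displayed isomorphism
\[
\Hom^{*}_{\KInj A}(\ik, X) \cong \Sigma^{-d} H^{*}(k\lotimes_A X)
\]
fails if $\lotimes_A$ is read as the derived tensor product: for any acyclic but non-contractible graded-injective $X$ (a Tate complex, say) one has $X\cong 0$ in $\sfD(A)$, so $k\lotimes_A X=0$, whereas $\Hom^*_{\KInj A}(\ik,X)$ must be nonzero if $\ik$ is to generate. If instead you intend the underived tensor $k\otimes_A X=X/\fm X$, then multiplication by the Hopf integral (a central cycle spanning the socle of $\nat A$) does furnish a natural chain isomorphism between $k\otimes_A X$ and a shift of $\Hom_A(k,X)$ on graded-free $X$; but that only identifies $H^*(k\otimes_A X)$ with $\Hom^*_\sfK(k,X)$, and you still owe the identification $\Hom^*_\sfK(\ik,X)\cong\Hom^*_\sfK(k,X)$. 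Invoking semi-injectivity of $\ik$ does not supply it: that property controls $\Hom_\sfK(-,\ik)$, not $\Hom_\sfK(\ik,-)$.

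This is precisely where the paper's proof does the real work. Having arranged $\ik^j=0$ for $j<0$, the cone $C$ of $k\to\ik$ is acyclic and bounded below; for an arbitrary graded-injective $I$ the paper uses Lemma~\ref{le:injectives-filtration} to produce a semi-injective submodule $J\subseteq I$ with $J^{\ges -2}=I^{\ges -2}$, so that $\Hom_\sfK(\Sigma^n C,I)\cong\Hom_\sfK(\Sigma^n C,J)=0$ for $n\in\{-1,0\}$, whence $\Hom_\sfK(\ik,I)\cong\Hom_\sfK(k,I)$. Compactness then follows because $k$ is finite over $\nat A$, and your generation argument via Lemma~\ref{le:injectives-structure} and your identification of compacts with $\sfD^{\mathsf f}(A)$ agree with the paper's. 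The proposal is on the right track, but the passage from $\ik$ to $k$ in the first variable of $\Hom_\sfK$ is missing and is not a formality.
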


\begin{proof}
The \dg $A$-module $k$ has a semi-injective resolution $I$ with
$I^{j}=0$ for $j<0$. One way to construct it is to start with a semi-free
resolution $F\to k$ with $F^{i}=0$ for $i>0$ and apply
$\Hom_{k}(-,k)$; note that $\Hom_{k}(F,k)$ is semi-injective, by
adjunction. Semi-injective resolutions of $\ik$ are isomorphic in
$\KInj A$ so we may assume that $\ik$ is concentrated in non-negative
degrees.

We have to prove that $\ik$ is compact in $\KInj A$ and that it
generates $\KInj A$.  Let $\sfK$ denote the homotopy category of \dg
$A$-modules with the usual triangulated structure. Identifying $\KInj
A$ with a full subcategory of $\sfK$ one gets an identification of $\Hom_{\KInj
A}(-,-)$ with $\Hom_{\sfK}(-,-)$.

We claim that for any graded-injective module $I$, the natural map $k\to\ik$ induces an isomorphism $\Hom_{\sfK}(\ik,I)\cong \Hom_{\sfK}(k,I)$. Indeed, the mapping cone of the canonical inclusion $k\to\ik$ gives rise to an exact triangle 
$k\to\ik\to C \to$ in $\sfK$ with $C^{j}=0$ for $j< 0$. The desired result is that 
\[ 
\Hom_{\sfK}(C,I)=0=\Hom_{\sfK}(\Sigma^{-1}C,I). 
\] 
By Lemma~\ref{le:injectives-filtration}, there exists a semi-injective \dg module $J\subseteq I$ with $J^{\ges -2}=I^{\ges -2}$. One then has isomorphisms
\[ 
0\cong \Hom_{\sfK}(\Sigma^{n}C,J) \cong \Hom_{\sfK}(\Sigma^{n}C,I)
\quad\text{for each $n\leq 0$,} 
\]
where the second one holds for degree reasons, and the first one
because $H^*(C)=0$ and $J$ is semi-injective.  This proves the claim.

If $\{I_{\alpha}\}$ is a set of graded-injective \dg modules over $A$ then the claim  yields
the first and third isomorphisms below:
\begin{align*}
\Hom_{\sfK}\big(\ik,\bigoplus_{\alpha}I_{\alpha}\big)
        &\cong \Hom_{\sfK}\big(k,\bigoplus_{\alpha}I_{\alpha}\big) \\
        &\cong \bigoplus_{\alpha}\Hom_{\sfK}(k,I_{\alpha})\\
        &\cong \bigoplus_{\alpha}\Hom_{\sfK}(\ik,I_{\alpha}).
\end{align*}
The second isomorphism holds because the $\nat A$-module $k$ is finitely generated. Therefore the \dg module $\ik$ is compact in $\KInj A$.

Suppose $I$ is a graded-injective \dg module with
$\Hom^*_{\sfK}(\ik,I)=0$. We wish to verify that $I$ is homotopy
equivalent to $0$. We may assume that $d(I)\subseteq \fm I$, by
Lemma~\ref{le:injectives-structure}, and hence that the differential
on the \dg module $\Hom_{A}(k,I)$ is zero.  This explains the first
isomorphism below:
\[ 
\Hom_{A}(k,I) \cong H^*(\Hom_{A}(k,I)) \cong \Hom^*_{\sfK}(k,I)\cong
\Hom^*_{\sfK}(\ik,I)=0.
\]
The second isomorphism is standard and the third one holds by the
claim established above. The equality is by hypothesis, and it follows that $I=0$.
\end{proof}
 
The following test for equivalence of triangulated categories is implicit in \cite[\S4.2]{Keller:1994a}.  The proof uses a standard d\'evissage argument. Recall that $\sfT^{\mathsf c}$ denotes the subcategory of compact objects in $\sfT$.

\begin{lemma}
\label{le:equivalence}
Let $F\col\sfS \to \sfT$ be an exact functor between compactly
generated triangulated categories with coproducts. If $F$ preserves coproducts
and restricts to an equivalence
$\sfS^{\sfc}\xrightarrow{\sim} \sfT^{\sfc}$, then $F$ is an
equivalence of categories.

In particular, if there exists a compact generator $C$ of $\sfS$ such
that $F(C)$ is a compact generator of $\sfT$ and the induced map
$\End_\sfS^*(C)\to\End_\sfT^*(FC)$ is an isomorphism, then $F$ is an
equivalence.
\end{lemma}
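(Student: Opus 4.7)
The plan is to prove the main assertion by constructing a right adjoint to $F$ and running a d\'evissage argument, then to deduce the second statement by identifying compact objects with the thick subcategories generated by the given compact generators.

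Since $\sfS$ is compactly generated and $F$ is exact and preserves coproducts, Brown representability produces a right adjoint $G\col\sfT\to\sfS$. Because $F|_{\sfS^\sfc}$ is an equivalence onto $\sfT^\sfc$, the functor $F$ sends compact objects to compact objects, which forces $G$ to preserve coproducts.

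To establish that $F$ is fully faithful, I would show that the unit $\eta\col \mathrm{id}_\sfS\to GF$ is a natural isomorphism. Consider the full subcategory
\[
\mathsf{U}=\{Y\in\sfS\mid \eta_Y\ \text{is an isomorphism}\}.
\]
This is a triangulated subcategory of $\sfS$; since both $\mathrm{id}_\sfS$ and $GF$ commute with coproducts, $\mathsf{U}$ is also closed under arbitrary coproducts, hence localising. For $X,Y\in\sfS^\sfc$ the adjunction identifies $\Hom^*_\sfS(X,GFY)$ with $\Hom^*_\sfT(FX,FY)$, and under this identification the map induced by $\eta_Y$ becomes the natural map $\Hom^*_\sfS(X,Y)\to\Hom^*_\sfT(FX,FY)$, which is an isomorphism by hypothesis. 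Since the compact objects generate $\sfS$ and hence detect isomorphisms, $\eta_Y$ is an isomorphism for each $Y\in\sfS^\sfc$; thus $\sfS^\sfc\subseteq \mathsf{U}$. Being localising, $\mathsf{U}$ must equal $\sfS$, so $F$ is fully faithful. Essential surjectivity is then immediate: the essential image of $F$ is a localising subcategory of $\sfT$ containing $F(\sfS^\sfc)=\sfT^\sfc$, and so coincides with $\sfT$.

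For the second assertion, compactness of the generators gives $\sfS^\sfc=\Thick(C)$ and $\sfT^\sfc=\Thick(FC)$. A standard d\'evissage on $\Thick(C)$, starting from the hypothesis that $\End^*_\sfS(C)\to\End^*_\sfT(FC)$ is an isomorphism, shows that the natural map $\Hom^*_\sfS(X,Y)\to\Hom^*_\sfT(FX,FY)$ is an isomorphism for every $X,Y\in\Thick(C)$. The essential image of $F|_{\sfS^\sfc}$ is then a thick subcategory of $\sfT^\sfc$ containing the compact generator $FC$, and so equals $\sfT^\sfc$. Hence $F$ restricts to an equivalence $\sfS^\sfc\xra{\sim}\sfT^\sfc$, and the first part applies. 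The main obstacle is the extension of fully faithfulness from compacts to arbitrary objects; the essential point is that $F$ preserves compactness, so that $G$ preserves coproducts, making $\mathsf{U}$ localising rather than merely thick.
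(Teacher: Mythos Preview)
Your argument is correct. The route differs from the paper's: instead of invoking Brown representability to produce a right adjoint and then showing the unit is an isomorphism, the paper runs a direct two-variable d\'evissage on the map $F_{X,Y}\colon\Hom_\sfS(X,Y)\to\Hom_\sfT(FX,FY)$. First one fixes a compact $D$ and shows that $\{X\in\sfS\mid F_{D,X}\text{ is bijective}\}$ is localising and contains $\sfS^\sfc$, hence equals $\sfS$; then, with the second variable now arbitrary, one repeats the d\'evissage in the first variable. Essential surjectivity is argued exactly as you do. Your approach buys a cleaner packaging (one localising subcategory $\mathsf U$ rather than two passes), at the cost of importing Brown representability and the fact that $G$ preserves coproducts; the paper's approach is more elementary and self-contained, using nothing beyond the definition of a localising subcategory and the generation hypothesis. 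For the ``in particular'' clause the paper leaves the reduction implicit, while you spell out the identification $\sfS^\sfc=\Thick(C)$ and the thick-subcategory d\'evissage; this is the same idea, just made explicit.
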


\begin{proof}
Fix a compact object $D$ of $\sfS$ and let $\sfS_D$ be the full
subcategory with objects $X$ in $\sfS$ for which the induced map
$F_{D,X}\col\Hom_\sfS(D,X)\to\Hom_\sfT(FD,FX)$ is a bijection. This is
a localising subcategory and contains $\sfS^{\sfc}$ by the assumption
on $F$.  Therefore $\sfS_D=\sfS$. Given this, a similar argument shows
that for any object $Y$ in $\sfS$ the subcategory $\{X\in\sfS\mid
F_{X,Y}\text{ is bijective}\}$ equals $\sfS$. Thus $F$ is fully
faithful. The essential image of $F$ is a localising subcategory of
$\sfT$ and contains a set of compact generators. We conclude that $F$
is an equivalence.
\end{proof}

This is used in the proof of the following result.
 
\begin{proposition}
\label{pr:kinj-quism}
Let $\vf\col A\to B$ be a morphism of \dg $k$-algebras where $A$ and
$B$ satisfy Hypothesis~\emph{\ref{hyp:A}}. One has an exact functor
$\Hom_{A}(B,-)\col\KInj A\to\KInj B$ of triangulated categories.  If
$\vf$ is a quasi-isomorphism, then this functor is an equivalence and
sends a semi-injective resolution of $k$ over $A$ to a semi-injective
resolution of $k$ over $B$.
\end{proposition}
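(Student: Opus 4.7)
The plan is to handle the two claims in turn. The first is formal: restriction along $\nat\vf\col\nat A\to\nat B$ is an exact functor of graded modules, so its right adjoint $\Hom_{\nat A}(\nat B,-)$ preserves graded-injective objects; lifting this observation to the dg level yields the triangulated functor $\Hom_A(B,-)\col\KInj A\to\KInj B$.

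Assume now $\vf$ is a quasi-isomorphism, and consider applying Lemma~\ref{le:equivalence} to this functor, with compact generator $\ik$ of $\KInj A$ supplied by Theorem~\ref{th:kinj-generator}. Coproducts are preserved because $B$ is finite dimensional over $k$, hence finitely generated as a graded $\nat A$-module. The essential task is then to show that $\Hom_A(B,\ik)$ is a semi-injective resolution of $k$ over $B$: Theorem~\ref{th:kinj-generator} will then certify that it is a compact generator of $\KInj B$, and under the natural identifications of both endomorphism rings with $\Ext^*_A(k,k)$ and $\Ext^*_B(k,k)$ respectively, the comparison $\End^*_{\KInj A}(\ik)\to\End^*_{\KInj B}(\Hom_A(B,\ik))$ becomes the isomorphism induced by the quasi-isomorphism $\vf$.

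To establish semi-injectivity, I would choose $\ik$ concentrated in non-negative degrees, as is possible by the construction in the proof of Theorem~\ref{th:kinj-generator}. Since $B$ sits in finitely many non-positive degrees, the dg $B$-module $\Hom_A(B,\ik)$ is then bounded below and graded-injective over $B$, so by the final assertion of Lemma~\ref{le:injectives-filtration} it is semi-injective. To identify its cohomology as the trivial $B$-module $k$, use that restriction $\vf^{*}\col\sfD(B)\to\sfD(A)$ is an equivalence (since $\vf$ is a quasi-isomorphism), whose right adjoint $\mathbf R\Hom_A(B,-)$ is therefore a quasi-inverse. Note that the augmentations of $A$ and $B$ are automatically compatible with $\vf$, since $A^0$ and $B^0$ are local with residue field $k$; applied to the dg module $k$ this yields $\mathbf R\Hom_A(B,k)\cong k$ in $\sfD(B)$, and hence $\Hom_A(B,\ik)\simeq k$ there.

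Lemma~\ref{le:equivalence} then delivers the desired equivalence, and the preceding construction simultaneously shows that semi-injective resolutions of $k$ over $A$ are carried to semi-injective resolutions of $k$ over $B$. I expect the main obstacle to be precisely the identification of $\Hom_A(B,\ik)$ as a resolution of $k$ with the correct $B$-module structure on cohomology; once that is in hand, the remaining verifications needed for Lemma~\ref{le:equivalence} follow easily.
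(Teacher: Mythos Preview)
Your proof is correct and follows essentially the same strategy as the paper: establish that $\Hom_A(B,-)$ is exact and coproduct-preserving, show that it sends $\ik$ to a semi-injective resolution of $k$ over $B$, and then invoke Lemma~\ref{le:equivalence}. The one minor difference is in how semi-injectivity of $\Hom_A(B,\ik)$ is verified: the paper observes directly from the adjunction $\Hom_B(-,\Hom_A(B,I))\cong\Hom_A(-,I)$ that \emph{every} semi-injective $I$ is sent to a semi-injective $B$-module, whereas you argue only for the specific $\ik$ via boundedness and Lemma~\ref{le:injectives-filtration}; both work, but the paper's route is slightly cleaner and more general.
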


\begin{proof}
When $I$ is a graded-injective \dg module over $A$, the adjunction isomorphism 
\[ 
\Hom_B(-,\Hom_{A}(B,I)) \cong \Hom_{A}(-,I) 
\]
implies that $\Hom_{A}(B,I)$ is a graded-injective over $B$.  Since
$\Hom_{A}(B,-)$ is additive it defines an exact functor at the level
of homotopy categories. The isomorphism above also implies that when
$I$ preserves quasi-isomorphisms so does $\Hom_{A}(-,I)$. Hence, when
$I$ is semi-injective so is $\Hom_{A}(B,I)$.
 
Suppose $\vf$ is a quasi-isomorphism and let $\ik$ be a semi-injective
resolution of $k$ over $A$. The \dg $B$-module $\Hom_A(B,\ik)$ is then
semi-injective and has cohomology $k$. It is hence a semi-injective
resolution of $k$ over $B$. In view of Theorem~\ref{th:kinj-generator}
one thus gets the following commutative diagram.
\[ 
\xymatrixrowsep{2pc}
\xymatrixcolsep{4.5pc}
\xymatrix{
\KInjc A \ar@{->}[r]^{\Hom_A(B,-)} \ar@{->}[d]^{\sim} & \KInjc B 
\ar@{->}[d]^{\sim} \\
\sfD^{\mathsf f}(A) \ar@{->}[r]^{\RHom_{A}(B,-)}  & \sfD^{\mathsf f}(B)} 
\]
The functor $\RHom_{A}(B,-)$ is an equivalence, because $\vf$ is a quasi-isomorphism. 
Finally, since $\nat B$ is finite dimensional over $k$, it is finite when viewed as a module over $\nat A$ via $\vf$, so the functor $\Hom_{A}(B,-)$ preserves coproducts. It remains to apply
Lemma~\ref{le:equivalence} to deduce that $\Hom_{A}(B,-)$ is an equivalence.
\end{proof}

In the remainder of this section we discuss  stratification for homotopy categories of graded-injective \dg modules.

\subsection*{$\KInj A$ is tensor triangulated}\ \medskip

\noindent
Given a \dg Hopf algebra $A$ and \dg $A$-modules $M$ and $N$, there is a \dg $A$-module structure on $M\otimes_{k}N$, obtained by restricting the natural action of $A\otimes_{k}A$ along the comultiplication $A\to A\otimes_{k}A$. This is the \emph{diagonal action} of $A$ on $M\otimes_{k}N$.

\begin{proposition}
\label{pr:dgh-tt}
Let $A$ be a \dg $k$-algebra satisfying Hypothesis~\emph{\ref{hyp:A}} and $\ik$ a semi-injective resolution of $k$ over $A$. The tensor product $\otimes_{k}$ with diagonal $A$-action endows $\KInj A$ with a structure of a tensor triangulated category with unit $\ik$.
\end{proposition}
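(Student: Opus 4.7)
The plan is to verify, one at a time, the axioms for $(\KInj A,\otimes_k,\ik)$ to be a tensor triangulated category, as listed at the beginning of this subsection: (i) bi-exactness and coproduct preservation of the bifunctor, and the fact that it restricts to $\KInj A\times\KInj A\to\KInj A$; (ii) a symmetric monoidal structure (associator, braiding, coherence) with unit $\ik$; (iii) compactness of $\ik$; and (iv) strong dualisability of compact objects. The ``single compact generator'' condition is Theorem~\ref{th:kinj-generator}.

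For (i) the starting observation is that by Hypothesis~\ref{hyp:A} the self-duality $\Hom_k(A,k)\cong\Sigma^d A$ identifies graded-injective with graded-free over $\nat A$. The standard Hopf twist $a\otimes m\mapsto \sum a_{(1)}\otimes a_{(2)}m$ yields an $A$-module isomorphism between $A\otimes_k M$ with the action on the first factor only and $A\otimes_k M$ with the diagonal action; applied to graded-free modules it shows that the diagonal tensor product of two graded-free dg $A$-modules is graded-free. Bi-exactness and preservation of coproducts are inherited from $\otimes_k$ over the field $k$, which also preserves null-homotopies, so $\otimes_k$ descends to a bifunctor of triangulated categories on $\KInj A$. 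For (ii), the associator and braiding (with the usual Koszul signs) are genuine isomorphisms of dg $A$-modules, arising from coassociativity and cocommutativity of the Hopf structure, and the coherence diagrams commute because they do in $(\Mod k,\otimes_k)$. Compactness of $\ik$ in (iii) is Theorem~\ref{th:kinj-generator}. For (iv), strong dualisability of a compact $M\in\KInjc A$ follows by taking $M^\vee=\Hom_k(M,k)$ with diagonal action: using the equivalence $\KInjc A\simeq\sfD^{\mathsf f}(A)$ one may replace $M$ by a finite-dimensional representative, and then self-duality keeps $M^\vee$ graded-injective, with the standard evaluation and coevaluation maps making it a strong dual.

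The main step, and the only one I expect to present real difficulty, is verifying the unit property of $\ik$ in (ii). The natural inclusion $k\hookrightarrow\ik$ induces, for each $M\in\KInj A$, a natural morphism
\[
\eta_M\col M\cong k\otimes_k M\longrightarrow\ik\otimes_k M,
\]
which is a quasi-isomorphism because $-\otimes_k M$ is exact over a field. To upgrade $\eta_M$ to an isomorphism in $\KInj A$, first handle the case $M=\ik$: both $\ik$ and $\ik\otimes_k\ik$ are graded-injective by (i) and are concentrated in non-negative degrees, hence semi-injective by the last statement of Lemma~\ref{le:injectives-filtration}; a quasi-isomorphism between semi-injective dg modules is a homotopy equivalence, so $\eta_{\ik}$ is an isomorphism in $\KInj A$. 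The full subcategory $\sfC=\{M\in\KInj A\mid\eta_M\text{ is an isomorphism}\}$ is triangulated and closed under coproducts by the five lemma, since both $\mathrm{id}$ and $\ik\otimes_k-$ are exact and coproduct-preserving functors. Since $\sfC$ contains the compact generator $\ik$, Theorem~\ref{th:kinj-generator} gives $\sfC=\KInj A$, so $\ik$ is a left unit; the braiding from (ii) supplies the right unit.
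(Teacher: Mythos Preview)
Your argument is correct and matches the paper's proof closely: both use the Hopf twist to show that the diagonal tensor of graded-free \dg $A$-modules is graded-free (hence $\otimes_k$ lands in $\KInj A$), and both establish the unit property by first verifying that $\eta_{\ik}$ is an isomorphism---using that $\ik$ and $\ik\otimes_k\ik$ are semi-injective via Lemma~\ref{le:injectives-filtration}---and then propagating to $\Loc(\ik)=\KInj A$ by Theorem~\ref{th:kinj-generator}. The paper simply records that the remaining axioms are ``readily verified,'' whereas you spell out (iii) and (iv).

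One minor wrinkle in your treatment of (iv): for a compact $M\in\KInjc A$, the object $\Hom_k(M,k)$ need not itself be graded-injective, since $M$ is typically an infinite-dimensional semi-injective resolution, and passing to a finite-dimensional representative in $\sfD^{\mathsf f}(A)$ does not land you back in $\KInj A$. The cleaner route is to observe that strongly dualisable objects form a thick subcategory of any symmetric monoidal triangulated category and that the unit $\ik$ is trivially strongly dualisable, so all of $\Thick(\ik)=\KInjc A$ is. Also note that invoking Lemma~\ref{le:injectives-filtration} for $\ik\otimes_k\ik$ tacitly assumes $\ik$ is chosen bounded below, as in the proof of Theorem~\ref{th:kinj-generator}; this is harmless since semi-injective resolutions are unique up to homotopy equivalence.
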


\begin{proof}
Standard arguments show that for any \dg $A$-module $M$ the graded
$\nat A$-modules underlying $M\otimes_{k}A$ and $A\otimes_{k}M$ are
free; see \cite[\S3.1]{Benson:1991a}. Graded-injective \dg $A$-modules
are graded-free as $\nat A$-modules and direct sums of
graded-injectives are graded-injectives, since $\nat A$ is
noetherian. Therefore if $I$ and $J$ are graded-injective \dg
$A$-modules, then so is the \dg $A$-module $I\otimes_{k}J$. Hence one
does get a tensor product on $\KInj A$.

We claim that the morphism $k\to \ik$ induces an isomorphism 
\[
I\cong k\otimes_{k} I \xrightarrow{\sim} \ik \otimes_{k} I 
\]
in $\KInj A$, so that $\ik$ is the unit of the tensor product on
$\KInj A$. Indeed, it is an isomorphism when $I=\ik$ because the
morphism $\ik \to \ik\otimes_{k}\ik$ is a quasi-isomorphism and both
$\ik$ and $\ik\otimes_{k}\ik$ are semi-injective \dg modules, the
first by construction and the second by
Lemma~\ref{le:injectives-filtration}. Therefore the map above is an
isomorphism for any $I$ in $\Loc(\ik)$, which is all of $\KInj A$, by
Theorem~\ref{th:kinj-generator}. For an alternative argument, see
\cite[Proposition~5.3]{Benson/Krause:2008a}.

The other requirements of a tensor triangulated structure are readily verified. 
 \end{proof}

\begin{remark}
\label{rem:dgh-action}
Let $A$ be a \dg $k$-algebra satisfying Hypothesis~\ref{hyp:A}.

The $k$-algebra $\Ext^{*}_{A}(k,k)$ is graded commutative, as $A$ is a Hopf $k$-algebra. Identifying $\Ext^{*}_A(k,k)$ with $\End^{*}_{\sfK}(\ik)$ there is a \emph{canonical action} on $\KInj A$ given by
\[ 
\Ext^{*}_A(k,k) \xrightarrow{X \otimes_k -} \Hom^{*}_{\KInj A}(X,X) 
\]
for $X$ in $\KInj{A}$. If the $k$-algebra $\Ext^{*}_{A}(k,k)$ is
finitely generated, and hence noetherian, the theory of localisation
and support described in \S\ref{se:strat} applies to $\KInj A$. Finite
generation holds, for instance, when the differential on $A$ is zero,
by a result of Friedlander and Suslin~\cite{Friedlander/Suslin:1997a}.
\end{remark}

Given a \dg $k$-algebra $A$ satisfying Hypothesis~\ref{hyp:A}, the
structure of $\KInj A$ which is most relevant for us does not depend
on the choice of a comultiplication on $A$. This is made precise in
the following proposition which is an immediate consequence of
Corollary~\ref{cor:2actions} and Lemma~\ref{le:2actions}.

\begin{proposition}
\label{pr:2tensors}
Let $A$ be a \dg $k$-algebra satisfying Hypothesis~\emph{\ref{hyp:A}}.
The following structures of $\KInj A$ do not depend on the choice of a comultiplication on $A$:
\begin{enumerate}[\quad\rm(1)]
\item the functors $\Gamma_{\mcV}$, $L_{\mcV}$, and $\Gamma_{\fp}$;
\item  the maps  $\sigma$ and $\tau$ defined in \emph{\eqref{eq:classification}};
\item stratification of $\KInj A$ via  the canonical action of $\Ext^{*}_A(k,k)$. \qed
\end{enumerate}
\end{proposition}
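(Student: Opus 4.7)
The plan is to view the three items as flowing from a single observation: regardless of which comultiplication is chosen, the canonical action of $R=\Ext^{*}_{A}(k,k)$ evaluated at the unit $\ik$ is the same map. Once this is established, Corollary~\ref{cor:2actions} takes care of (1) (and hence (2)), while Lemma~\ref{le:2actions} takes care of (3).

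First I would fix two comultiplications $\Delta,\Delta'$ on $A$ and let $\otimes,\otimes'$ denote the resulting tensor products on $\KInj A$. By Proposition~\ref{pr:dgh-tt}, both give $\KInj A$ a tensor triangulated structure whose unit is $\ik$, and Theorem~\ref{th:kinj-generator} identifies $\ik$ as a compact generator. Writing $\phi,\phi'\col R\to Z^{*}\KInj A$ for the two canonical actions, I would observe that the evaluations
\[
\phi_{\ik},\phi_{\ik}'\col R=\End^{*}_{\KInj A}(\ik)\longrightarrow \End^{*}_{\KInj A}(\ik)
\]
are, in each case, the map $r\mapsto \ik\otimes r$ composed with the unit isomorphism $\ik\otimes\ik\cong\ik$; in any symmetric monoidal category this is the identity of $\End^{*}(\one)$. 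Thus $\phi_{\ik}=\phi_{\ik}'=\operatorname{id}_{R}$, independently of $\Delta$ and $\Delta'$.

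With $\ik$ a compact generator and the actions agreeing on $\ik$, Corollary~\ref{cor:2actions} yields that the functors $\Gamma_{\mcV}$ and $L_{\mcV}$ defined through $\phi$ and through $\phi'$ coincide for every specialisation closed $\mcV\subseteq \Spec R$; the equality of $\Gamma_{\fp}=\Gamma_{\mcV}L_{\mcW}$ for suitable $\mcV,\mcW$ follows at once, giving (1). For (2), observe that $\supp_{R}$ is defined purely in terms of the $\Gamma_{\fp}$, so $\sigma$ is independent of the choice of comultiplication; and the condition $X\in\tau(\mcV)$, being a condition on $\supp_{R}X$, is equally independent. Finally, since $\ik$ generates $\KInj A$ and the two actions agree on $\ik$, Lemma~\ref{le:2actions} directly delivers (3).

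There is essentially no obstacle here: everything has been arranged in advance by Corollary~\ref{cor:2actions} and Lemma~\ref{le:2actions}. The only point that requires a moment's care is verifying that the canonical action evaluated at the monoidal unit is the identity of $\End^{*}(\one)$, which is a formal consequence of the coherence axioms of a symmetric monoidal category and is independent of the particular tensor product.
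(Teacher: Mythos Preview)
Your proposal is correct and follows exactly the approach the paper has in mind: the paper states this proposition as an immediate consequence of Corollary~\ref{cor:2actions} and Lemma~\ref{le:2actions}, and you have simply spelled out the verification that their hypotheses are met, namely that $\ik$ is a compact generator common to both tensor structures and that the canonical action evaluated at the unit is the identity of $\End^{*}_{\KInj A}(\ik)$ regardless of the comultiplication.
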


\subsection*{Transfer of stratification}\ \medskip

\noindent 
The next results deal with transfer of stratification between homotopy
categories of graded injective \dg modules of \dg algebras.

\begin{proposition}
\label{pr:strat-quism}
Let $\vf\col A\to B$ be a quasi-isomorphism of \dg $k$-algebras where
$A,B$ satisfy Hypothesis~\emph{\ref{hyp:A}}.  Then $\KInj A$ is
stratified by the canonical action of $\Ext^{*}_A(k,k)$ if and only if
$\KInj B$ is stratified by the canonical action of $\Ext^{*}_B(k,k)$.
\end{proposition}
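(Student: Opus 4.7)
The plan is to invoke Proposition~\ref{pr:kinj-quism}, which gives a triangulated equivalence $F := \Hom_A(B,-) \col \KInj A \xrightarrow{\,\sim\,} \KInj B$ sending a semi-injective resolution $\ik_A$ of $k$ over $A$ to a semi-injective resolution $\ik_B := F(\ik_A)$ of $k$ over $B$. Being an equivalence, $F$ induces a graded ring isomorphism
\[
\psi \col R_A := \End^*_{\KInj A}(\ik_A) \xrightarrow{\,\sim\,} \End^*_{\KInj B}(\ik_B) =: R_B,
\]
identifying $\Ext^{*}_{A}(k,k)$ with $\Ext^{*}_{B}(k,k)$ and hence $\Spec R_A$ with $\Spec R_B$. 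By symmetry it suffices to show that stratification of $\KInj A$ by $R_A$ implies stratification of $\KInj B$ by $R_B$.

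Next I would use $F$ to transport the canonical $R_A$-action on $\KInj A$ to an $R_A$-action $\beta$ on $\KInj B$. There is also the $R_A$-action $\alpha$ on $\KInj B$ obtained by composing $\psi$ with the canonical $R_B$-action. By construction, both $\alpha$ and $\beta$ restrict to the same ring map $\psi \col R_A \to R_B = \End^{*}_{\KInj B}(\ik_B)$ on the compact generator $\ik_B$, since in each case this is the tautological action of an endomorphism ring on itself. Corollary~\ref{cor:2actions} then yields that the functors $\Gamma_{\fp}$ and $L_{\mcV}$ defined through $\alpha$ and $\beta$ coincide, for all $\fp \in \Spec R_A$ and all specialisation closed $\mcV \subseteq \Spec R_A$. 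Since $\beta$ is obtained by transport of structure along $F$, this gives a natural isomorphism $F \circ \Gamma_{\fp} \cong \Gamma_{\psi(\fp)} \circ F$ for each $\fp \in \Spec R_A$.

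With this intertwining in hand the transfer of stratification is formal. By Theorem~\ref{th:kinj-generator}, the units $\ik_A$ and $\ik_B$ generate $\KInj A$ and $\KInj B$, so every localising subcategory of either category is automatically tensor ideal. Thus $F$ induces a bijection between tensor ideal localising subcategories on the two sides which, under $\psi$, identifies $\Gamma_{\fp}\KInj A$ with $\Gamma_{\psi(\fp)}\KInj B$. Consequently one side is zero or minimal exactly when the other is, proving the equivalence of stratifications. The only point requiring care is the verification that $\alpha$ and $\beta$ agree on the unit $\ik_B$; after that, everything follows from the machinery of \S\ref{se:strat} combined with the equivalence provided by Proposition~\ref{pr:kinj-quism}.
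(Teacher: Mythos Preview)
Your proof is correct and follows essentially the same approach as the paper's: both use the equivalence $F=\Hom_A(B,-)$ from Proposition~\ref{pr:kinj-quism}, compare the transported $R_A$-action on $\KInj B$ with the one induced by the isomorphism $\psi$ and the canonical $R_B$-action, observe that these two actions agree on the compact generator $\ik_B$, and conclude. The only cosmetic difference is that the paper invokes Lemma~\ref{le:2actions} directly (implicitly transporting the tensor structure as well), whereas you unpack that lemma by appealing to Corollary~\ref{cor:2actions} and then arguing minimality by hand using that $\ik$ generates; these are the same argument at slightly different levels of packaging.
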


Observe: $\vf$ is not required to commute with the comultiplications on $A$ and $B$.

\begin{proof}
Proposition~\ref{pr:kinj-quism} yields that $\Hom_A(B,-)\col \KInj A\rightarrow\KInj B$
is an equivalence sending  a semi-injective resolution of $k$ over $A$ to a semi-injective
resolution of $k$ over $B$. Thus $\Hom_A(B,-)$ induces an isomorphism
$\mu\col\Ext^{*}_A(k,k)\xrightarrow{\sim} \Ext^{*}_B(k,k)$.  Observe
that $\KInj B$ admits two actions of $\Ext^{*}_A(k,k)$.  The first is
the canonical action of $\Ext^{*}_B(k,k)$ composed with $\mu$ and the
other is the canonical action on $\KInj A$ composed with the equivalence $\Hom_A(B,-)$.

Suppose $\KInj A$ is stratified by the canonical action of
$\Ext^{*}_A(k,k)$. Then $\KInj B$ is stratified by $\Ext^{*}_A(k,k)$ via the
second action because $\Hom_A(B,-)$ is an equivalence.  It follows
from Lemma~\ref{le:2actions} that $\KInj B$ is stratified via the
first action, and hence the canonical action of $\Ext^{*}_B(k,k)$
stratifies $\KInj B$ since $\mu$ is an isomorphism. This argument can
be reversed by using a quasi-inverse of $\Hom_A(B,-)$.
\end{proof}

\begin{theorem}
\label{th:strat-descent}
Let $A$ be a \dg $k$-algebra satisfying Hypothesis \emph{\ref{hyp:A}}. If $\KInj A$ is stratified by the canonical action of $\Ext^{*}_A(k,k)$, then $\KInj{A^{0}}$ is stratified by the canonical action of $\Ext^{*}_{A^{0}}(k,k)$.
\end{theorem}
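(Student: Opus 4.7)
The plan is to transfer stratification from $\KInj A$ to $\KInj{A^0}$ via the restriction--induction adjunction associated to the inclusion $A^0 \hookrightarrow A$. Hypothesis~\ref{hyp:A}(1) forces the differential to vanish on $A^0$, so this inclusion is a morphism of dg Hopf algebras, and Hypothesis~\ref{hyp:A}(3) makes $A$ graded-free over $A^0$. Consequently $\res\colon \KInj A \to \KInj{A^0}$ is a well-defined exact tensor functor. Choosing $\ik_A$ concentrated in non-negative degrees (as in the proof of Theorem~\ref{th:kinj-generator}), Lemma~\ref{le:injectives-filtration} implies $\res(\ik_A) \cong \ik_{A^0}$ in $\KInj{A^0}$, so $\res$ induces the restriction homomorphism $\mu\colon \Ext^*_A(k,k) \to \Ext^*_{A^0}(k,k)$. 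By Lemma~\ref{le:2rings} it suffices to show that $\KInj{A^0}$ is stratified by the action of $\Ext^*_A(k,k)$ via $\mu$.

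The central structural tool is a finite d\'evissage. The internal-degree filtration $F_n = A^{\geq -n}$ forms a finite increasing filtration of $A$ by dg $A^0$-sub-bimodules whose successive quotients $A^{-n}$ have zero differential (because $d(A^i)\subseteq A^{i+1}$) and are graded-free over $A^0$. Tensoring this filtration with any $Y \in \KInj{A^0}$ produces a finite filtration of $\res \ind Y = A \otimes_{A^0} Y$, where $\ind = A \otimes_{A^0} -$ denotes the left adjoint of $\res$, whose successive quotients are finite direct sums of shifts of $Y$.

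Using the Hopf projection formula $\ind(\res Z \otimes_k U) \cong Z \otimes_k \ind U$ together with the isomorphism $\res(\Gamma_\fq \ik_A) \cong \Gamma_\fq \ik_{A^0}$ (a consequence of $\res$ being a tensor functor sending unit to unit; compare Corollary~\ref{cor:2actions}), one checks that $\ind$ carries the $\fq$-local $\fq$-torsion subcategory of $\KInj{A^0}$ into its counterpart in $\KInj A$ for each $\fq \in \Spec \Ext^*_A(k,k)$. Now fix $\fq$ with $\sfC = \Gamma_\fq \KInj{A^0} \ne 0$ (computed via $\mu$), and take non-zero $X,Y\in \sfC$; to establish minimality of $\sfC$ one applies Lemma~\ref{le:loc-minimal} and must exhibit a non-zero element of $\Hom^*_{\KInj{A^0}}(X,Y)$. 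The objects $\ind X, \ind Y$ lie in $\Gamma_\fq \KInj A$, and---via faithful flatness of $A$ over $A^0$---are non-zero. The stratification hypothesis for $\KInj A$ combined with Lemma~\ref{le:loc-minimal} then gives $\Hom^*_{\KInj A}(\ind X,\ind Y) \ne 0$, which by adjunction becomes $\Hom^*_{\KInj{A^0}}(X,\res \ind Y)\ne 0$. The d\'evissage of $\res \ind Y$ along $F_\bullet$ forces at least one subquotient (a direct sum of shifts of $Y$) to receive a non-zero morphism from $X$, whence $\Hom^*_{\KInj{A^0}}(X,Y)\ne 0$.

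The principal obstacle is that the bimodule splitting $A = A^0 \oplus A^{<0}$ is \emph{not} compatible with the differential on $A$, so $Y$ is not a dg direct summand of $\res \ind Y$. The finite d\'evissage sidesteps this by using only the closure of the tensor ideal localising subcategory $\sfC$ under extensions and shifts, together with the long exact $\Hom^*_{\KInj{A^0}}(X,-)$-sequences attached to the filtration. A subsidiary technical point is verifying non-vanishing of $\ind X$ and $\ind Y$; this reduces via Theorem~\ref{th:kinj-generator} and detection of zero through the compact generator $\ik_{A^0}$ to a faithful-flatness argument for $A$ over $A^0$.
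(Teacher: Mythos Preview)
Your overall architecture---reduce via Lemma~\ref{le:2rings} to the $\Ext^*_A(k,k)$-action on $\KInj{A^0}$, then use the minimality criterion of Lemma~\ref{le:loc-minimal} together with the adjunction between $\KInj{A^0}$ and $\KInj A$---matches the paper's. The difference is that you work with the \emph{left} adjoint $\ind = A\otimes_{A^0}-$ and the direct form of Lemma~\ref{le:loc-minimal}, whereas the paper uses the \emph{right} adjoint $\coind = \Hom_{A^0}(A,-)$ and the contrapositive. With $\coind$ both delicate points become one-liners: the adjunction isomorphism $\Hom^*_{\sfK(A^0)}(\ik_A{\da},X)\cong\Hom^*_{\sfK(A)}(\ik_A,\coind X)$ is compatible with the $\Ext^*_A(k,k)$-action (so $\coind$ preserves $\Gamma_\fp$), and since $\ik_A{\da}\cong \ik_{A^0}$ generates, $\coind X=0$ forces $X=0$. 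Your d\'evissage of $\res\ind Y$ via the filtration $A^{\ge -n}$ plays the same role as the paper's observation that $A{\da}\in\Thick(A^0)$, and your final step (``some subquotient receives a non-zero map'') is fine.

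There is, however, a genuine gap in your conservativity claim for $\ind$. ``Faithful flatness of $A$ over $A^0$'' is an algebraic statement about modules; it does not by itself show that $A\otimes_{A^0}Y$ is non-contractible in $\KInj A$ whenever $Y$ is non-contractible in $\KInj{A^0}$. The filtration argument does not close this either: from $\res\ind Y=0$ one gets only relations among the subquotients $(\Sigma^i Y)^{r_i}$, not $Y=0$. (Note also that $\res$ itself is \emph{not} conservative---the Tate resolution over an exterior algebra on one generator restricts to a contractible complex over $k$---so care is needed.) The clean fix is to invoke the Frobenius self-duality $\Hom_k(A,k)\cong\Sigma^d A$ and the analogous statement for $A^0$ to identify $\ind$ with a shift of $\coind$, after which conservativity follows from the adjunction as in the paper; alternatively, just run your argument with $\coind$ from the start, which also makes the ``$\ind$ preserves $\Gamma_\fq$'' step (your projection-formula detour) unnecessary.
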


\begin{proof}
We write $\sfK(A^{0})$ and $\sfK(A)$ for $\KInj{A^{0}}$ and $\KInj A$, respectively. Hypothesis~\ref{hyp:A} implies that $\nat A$, the graded module underlying $A$,  is free of finite rank over $A^{0}$. Therefore when $I$ is a graded-injective (respectively, semi-injective) \dg $A$-module the adjunction isomorphism $\Hom_{A^{0}}(-,I)\cong \Hom_{A}(A\otimes_{A^{0}}-,I)$ yields that $I$ is also graded-injective (respectively, semi-injective) as a \dg $A^{0}$-module.  In particular, the inclusion $A^{0} \to A$ gives rise to a restriction functor 
\[
(-){\da}\col \sfK(A)\to \sfK(A^{0}).
\]
Let $\ik$ be a semi-injective resolution of $k$ over $A$. The \dg $A^{0}$-module $\ik{\da}$ is  then  a semi-injective resolution of $k$ over $A^{0}$, so restriction induces a homomorphism of graded $k$-algebras $\Ext^{*}_{A}(k,k)\to \Ext^{*}_{A^{0}}(k,k)$. In view of Lemma~\ref{le:2rings} it thus suffices to prove that $\sfK(A^{0})$ is stratified by the action of $\Ext^{*}_{A}(k,k)$.

The restriction functor has a right adjoint
\[ 
\Hom_{A^{0}}(A,-) \col\sfK(A^{0})\to\sfK(A). 
\]

Fix a prime $\fp$ in $\Spec \Ext^{*}_{A}(k,k)$, and let $X$ and $Y$ be objects in $\Gamma_\fp\sfK(A^{0})$ with $\Hom^*_{\sfK(A^{0})}(X,Y)=0$. It suffices to prove that $X$ or $Y$ is zero; see  Lemma~\ref{le:loc-minimal}.

As an $A^{0}$-module $\nat A$ is free of finite rank therefore the \dg
$A^{0}$-module $A\da$ is in $\Thick(A^{0})$, the thick subcategory of
$\sfK(A^{0})$ generated by $A^{0}$. Thus the \dg $A^{0}$-module
$\Hom_{A^{0}}(A,X)\da$ is in $\Thick(X)$. This justifies the second
isomorphism below:
\[ 
\Hom^*_{\sfK(A)}(\Hom_{A^{0}}(A,X),\Hom_{A^{0}}(A,Y))\cong
\Hom^*_{\sfK(A^{0})}(\Hom_{A^{0}}(A,X)\da,Y)=0. 
\]
The first one is adjunction. Observe that the adjunction isomorphism 
\[
\Hom^{*}_{\sfK(A^{0})}(\ik\da,X) \cong \Hom^{*}_{\sfK(A)}(\ik,\Hom_{A^{0}}(A,X))
\]
is compatible with the action of $\Ext^{*}_{A}(k,k)$. It thus follows that both $\Hom_{A^{0}}(A,X)$ and $\Hom_{A^{0}}(A,Y)$ are in $\Gamma_\fp\sfK(A)$, and hence that one of them is zero, since $\sfK(A)$ is stratified by $\Ext^{*}_{A}(k,k)$. Assume, without loss of generality, that $\Hom_{A^{0}}(A,X)=0$. The isomorphism above then yields $\Hom^{*}_{\sfK(A^{0})}(\ik\da,X)=0$. By Theorem~\ref{th:kinj-generator} the \dg $A^{0}$-module $\ik\da$ generates $\sfK(A^{0})$, hence one gets that $X=0$.
\end{proof}

\section{Graded polynomial algebras}
\label{se:poly}
Let $k$ be a field and $S$ a graded polynomial $k$-algebra on finitely
many indeterminates. We assume that each variable is of even degree if
the characteristic of $k$ is not $2$, so that $S$ is strictly
commutative. The algebra $S$ is viewed as a \dg algebra with zero
differential and we write $\sfD(S)$ for the derived category of \dg
$S$-modules. The objects of this category are \dg $S$-modules and the
morphisms are obtained by inverting the quasi-isomorphisms; see for
example \cite{Keller:1994a}.
 
The main result of this section is a classification of the localising
subcategories of $\sfD(S)$. This is a graded analogue of the theorem
of Neeman \cite{Neeman:1992a}. In \cite{Benson/Iyengar/Krause:bik2},
we establish this result for general graded commutative noetherian
rings. The argument presented here for $S$ is simpler because we make
use of the fact that the Koszul complex of a regular local ring is
quasi-isomorphic to its residue field.

The category $\sfD(S)$ is tensor triangulated where the tensor product
is $\lotimes_{S}$, the derived tensor product, and the unit is
$S$. Observe that $S$ is compact and that it generates $\sfD(S)$. In
particular, localising subcategories of $\sfD(S)$ are also tensor
ideal. There is a canonical action of the ring $S$ on $\sfD(S)$, where
the homomorphism $S\to \End^{*}_{\sfD(S)}(X)$ is given by
multiplication. The theory of localisation and support described in
\S\ref{se:strat} thus applies.

In this context, one has also the following useful identification.

\begin{lemma}
\label{le:dgloc}
Let $\fp$ be a point in $\Spec S$ and set $\mcZ=\{\fq\in\Spec S\mid
\fq\not\subseteq\fp\}$. For each $M$ in $\sfD(S)$ there is a natural
isomorphism $L_{\mcZ}M\cong M_{\fp}$.
\end{lemma}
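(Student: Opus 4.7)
The plan is to exhibit the derived localisation functor $(-)_{\fp} = - \lotimes_S S_\fp$ on $\sfD(S)$ as a Bousfield localisation with kernel $\sfT_\mcZ$, thereby identifying it with $L_\mcZ$.

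First I would identify $\sfT_\mcZ$ concretely. The set $\mcZ$ is specialisation closed, since $\fq \not\subseteq \fp$ and $\fq' \supseteq \fq$ together force $\fq' \not\subseteq \fp$. Because $S$ is a compact generator of $\sfD(S)$ whose action is by multiplication, Proposition~\ref{pr:indep-of-C} together with $\Hom^*_{\sfD(S)}(S,M) \cong H^*(M)$ will yield
\[
\sfT_\mcZ = \{M \in \sfD(S) \mid H^*(M)_\fq = 0 \text{ for all } \fq \subseteq \fp\}.
\]
For $\fq \subseteq \fp$ the module $H^*(M)_\fq$ is a further localisation of $H^*(M)_\fp$, so this condition collapses to $M_\fp = 0$.

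Next I would verify that the natural map $\eta_M \col M \to M_\fp$ satisfies the two characterising properties of Bousfield localisation at $\sfT_\mcZ$. Applying $(-)_\fp$ to $\eta_M$ produces the map $M_\fp \to (M_\fp)_\fp$, which is an isomorphism since $S_\fp \lotimes_S S_\fp \cong S_\fp$; consequently $\cone(\eta_M)_\fp = 0$, placing $\cone(\eta_M)$ in $\sfT_\mcZ$. For orthogonality, given $Y \in \sfT_\mcZ$, the restriction-extension adjunction for the flat ring map $S \to S_\fp$ provides an isomorphism
\[
\Hom_{\sfD(S)}(Y, M_\fp) \cong \Hom_{\sfD(S_\fp)}(Y_\fp, M_\fp),
\]
whose right hand side vanishes since $Y_\fp = 0$. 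The standard uniqueness of Bousfield localisation then produces a natural isomorphism $L_\mcZ M \cong M_\fp$. The only step requiring any care is the displayed adjunction, which is routine once one notes that $S \to S_\fp$ is both flat and a ring epimorphism, so that the restriction functor $\sfD(S_\fp) \to \sfD(S)$ is fully faithful with left adjoint $(-)_\fp$.
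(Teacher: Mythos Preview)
Your argument is correct and follows essentially the same strategy as the paper's: both identify $(-)_{\fp}$ as a Bousfield localisation functor on $\sfD(S)$ with kernel $\sfT_{\mcZ}$, then invoke uniqueness. The paper's proof is terser, simply asserting that $M\mapsto M_{\fp}$ is a localisation functor and citing \cite[Theorem~4.7]{Benson/Iyengar/Krause:2008a} for the identification $H^{*}(L_{\mcZ}M)\cong H^{*}(M)_{\fp}$, whereas you unpack the two defining properties of the localisation by hand via the flat ring epimorphism $S\to S_{\fp}$.
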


\begin{proof}
The functor on $\sfD(S)$ defined by $M\mapsto M_{\fp}$ is a
localisation functor and has the same acyclic objects as $L_{\mcZ}$,
since $H^{*}(L_{\mcZ}M)\cong H^{*}(M)_{\fp}$, by \cite[Theorem
4.7]{Benson/Iyengar/Krause:2008a}. This implies the desired result.
\end{proof}

\begin{theorem}
\label{th:DdgS}
The category $\sfD(S)$ is stratified by the canonical $S$-action. In particular, the maps
\[ 
\xymatrix{
\{\text{localising subcategories of $\sfD(S)$}\}
\ar@<.5ex>[r]^-\sigma &
\{\text{subsets of $\Spec S$}\} \ar@<.5ex>[l]^-\tau} 
\]
described in \eqref{eq:classification} are mutually inverse bijections.
\end{theorem}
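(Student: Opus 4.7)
The second assertion is immediate from the first combined with Theorem~\ref{th:transfer-strat}, noting that $S$ generates $\sfD(S)$ (so localising subcategories are automatically tensor ideal) and that $\supp_S\sfD(S)=\Spec S$, which is verified as part of the argument below. So I focus on proving that $\sfD(S)$ is stratified by the canonical $S$-action: for each $\fp\in\Spec S$, I must show that $\Gamma_\fp\sfD(S)$ is either zero or minimal among tensor ideal localising subcategories.

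Fix $\fp\in\Spec S$. Using the identity $\Gamma_\fp=\Gamma_{\mcV(\fp)}L_\mcZ$ with $\mcZ=\{\fq\mid\fq\not\subseteq\fp\}$ together with Lemma~\ref{le:dgloc}, I identify $\Gamma_\fp\sfD(S)$ with the full subcategory of $\sfD(S_\fp)$ consisting of objects with $\fp S_\fp$-torsion cohomology. Since $S$ is a graded polynomial ring, $S_\fp$ is graded regular local, so $\fp S_\fp$ is generated by a homogeneous regular sequence $\boldsymbol{r}=r_1,\dots,r_n$; the Koszul complex $K(\boldsymbol{r};S_\fp)$ is a finite free resolution of the graded residue field $k(\fp)=S_\fp/\fp S_\fp$, and coincides up to shift with $\kos{S_\fp}{\fp S_\fp}$. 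Applying Proposition~\ref{pr:loc-pr}(2) in $\sfD(S_\fp)$ with compact generator $S_\fp$, one obtains $\Gamma_\fp\sfD(S)=\Loc_{\sfD(S_\fp)}(k(\fp))$. In particular $\Gamma_\fp S\ne 0$ for every $\fp$, which confirms $\supp_S\sfD(S)=\Spec S$.

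For minimality, I apply Lemma~\ref{le:loc-minimal}(c) with $C=S$: it suffices to show that for nonzero $X\in\Loc_{\sfD(S_\fp)}(k(\fp))$ one has $\Loc(X)=\Loc(k(\fp))$ in $\sfD(S_\fp)$, for then $\Hom^{*}_{\sfD(S)}(X,Y)\ne 0$ for any nonzero $Y$ in this subcategory. The Koszul resolution identifies $X\lotimes_{S_\fp}k(\fp)$ with the bounded complex $X\otimes_{S_\fp}K(\boldsymbol{r};S_\fp)$, obtained from $X$ by iterated mapping cones of multiplication by the $r_i$, and hence lying in $\Loc(X)$. Viewed as an object of the derived category of the graded field $k(\fp)$, this complex is a direct sum of shifts of $k(\fp)$, so it has $k(\fp)$ as a direct summand whenever it is nonzero; this places $k(\fp)$ in $\Loc(X)$ and yields the desired equality $\Loc(X)=\Loc(k(\fp))$.

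It therefore remains to show $X\lotimes_{S_\fp}k(\fp)\ne 0$ for every nonzero $X\in\Loc(k(\fp))$, a derived Nakayama statement which is the main technical point of the proof. Vanishing of $X\lotimes_{S_\fp}k(\fp)$ would mean the Koszul homology of $\boldsymbol{r}$ with values in $X$ vanishes, and by induction on the length $n$ of the regular sequence this forces each $r_i$ to act as an isomorphism on $H^{*}(X)$; combined with the fact that $H^{*}(X)$ is $\fp S_\fp$-torsion (so every element is killed by some power of each $r_i$) this forces $H^{*}(X)=0$, hence $X=0$, contradicting the hypothesis. The delicate point is that the cohomology is potentially unbounded and not finitely generated, which is handled precisely by the iterative Koszul argument rather than a naive application of the graded Nakayama lemma.
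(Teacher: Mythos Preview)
Your argument follows the paper's almost exactly: both exploit the regularity of $S_\fp$ so that the Koszul object $\kos{S_\fp}{\fp S_\fp}$ is quasi-isomorphic to $k(\fp)$, and both establish minimality by showing $\Loc(X)=\Loc(k(\fp))$ for every nonzero $X$ in $\Gamma_\fp\sfD(S)$. The only organisational difference is in how the nonvanishing of $k(\fp)\lotimes_S X$ is obtained. The paper first proves $\Loc(\Gamma_\fp S)=\Loc(k(\fp))$ and then applies $-\lotimes_S M$ to this equality, yielding $\Loc(M)=\Loc(k(\fp)\lotimes_S M)$; since $M\ne 0$ the right side is nonzero, so the nonvanishing drops out with no further work. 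You instead argue the nonvanishing directly via a derived Nakayama statement.

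That Nakayama step, as written, contains a slip. From acyclicity of $K(\boldsymbol r;X)$ one does \emph{not} conclude by induction on $n$ alone that each $r_i$ acts invertibly on $H^*(X)$: take $\boldsymbol r=(x,y)$ and $X=S_\fp[x^{-1}]$, where the Koszul complex is acyclic already after the first cone yet $y$ is not invertible on $H^*(X)$. The torsion hypothesis must be woven into the induction, not applied afterwards. The correct argument is: acyclicity of $K(\boldsymbol r;X)=K(r_n;Y)$ with $Y=K(r_1,\dots,r_{n-1};X)$ shows $r_n$ is invertible on $H^*(Y)$; since $H^*(Y)$ inherits $\fp S_\fp$-torsion from $H^*(X)$ through the long exact sequences, this forces $H^*(Y)=0$, and then induction on $n$ gives $H^*(X)=0$. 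Your conclusion is right, only the intermediate claim and the order of the two ingredients need this adjustment; alternatively, adopting the paper's tensoring trick sidesteps the issue entirely.
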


\begin{proof}
By Theorem~\ref{th:transfer-strat}, the second part of the statement follows from the first since localising subcategories of $\sfD(S)$  are tensor ideal.

Fix a point $\fp\in\Spec S$. Since $\Hom_{\sfD(S)}^{*}(S,X)$ equals
$H^{*}(X)$, the subcategory $\Gamma_{\fp}\sfD(S)$ consists of \dg
modules whose cohomology is $\fp$-local and $\fp$-torsion.  Let
$k(\fp)$ be the homogeneous localisation of $S/\fp$ at $\fp$; it is a
graded field. Evidently $k(\fp)$ is in $\Gamma_{\fp}\sfD(S)$, so for
the desired result it suffices to prove that there is an equality
\[
\Loc(M) = \Loc (k(\fp)).
\] 
for any non-zero \dg module $M$ in $\Gamma_{\fp}\sfD(S)$. We verify this first for $M=\Gamma_{\fp}S$. 

Let $\bss=s_{1},\dots,s_{d}$ be a sequence of elements in $S$ whose
images in $S_{\fp}$ are a minimal set of generators for the ideal $\fp
S_{\fp}$ in the ring $S_{\fp}$. Let $\mcV$ denote the set of primes in
$\Spec S$ containing $\bss$ and let $\mcZ=\{\fq\in\Spec S\mid
\fq\not\subseteq\fp\}$; these are specialisation closed
subsets. Observe that $\mcV\setminus\mcZ=\{\fp\}$, so one gets the
first equality below:
\[
\Loc(\Gamma_{\fp}S) 
= \Loc(L_{\mcZ}\Gamma_{\mcV}S) 
=  \Loc(L_{\mcZ}(\kos S{\bss})) 
= \Loc((\kos S{\bss})_{\fp})
=\Loc(k(\fp)).
\]
The second equality is a consequence of
Proposition~\ref{pr:loc-pr}(2), since the functor $L_{\mcZ}$ preserves
arbitrary coproducts, while the third one follows from
Lemma~\ref{le:dgloc}. The last equality holds because the \dg module
$(\kos S{\bss})_{\fp}$ is quasi-isomorphic to $k(\fp)$ by the choice
of $\bss$, since it is a regular sequence in $S_{\fp}$; see
\cite[Corollary~1.6.14]{Bruns/Herzog:1998a}.

We now know that $\Loc(\Gamma_{\fp}S)=\Loc(k(\fp))$ holds. Applying
the functor $-\lotimes_{S} M$ to it yields the second equality below:
\[
\Loc(M) = \Loc(\Gamma_{\fp}M) =\Loc(k(\fp)\lotimes_{S} M).
\]
The first one holds because $M$ is in $\Gamma_{\fp}\sfD(S)$. The action of $S$ on $k(\fp)\lotimes_{S} M$ factors through the graded field $k(\fp)$, as $S$ is commutative. The equality above implies that $H^{*}(k(\fp)\lotimes_{S} M)$ is non-zero, so one deduces that $k(\fp)\lotimes_{S} M$ is quasi-isomorphic to a direct sum of shifts of $k(\fp)$. Hence $\Loc(k(\fp)\lotimes_{S} M)=\Loc(k(\fp))$. Combining this equality with the one above yields the desired result.
 \end{proof}

\section{Exterior algebras}
\label{se:bgg}

Let $k$ be a field and let $\Lambda$ be the graded exterior algebra
over $k$ on indeterminates $\xi_{1},\dots,\xi_{c}$ of negative odd
degree. We view $\Lambda$ as a \dg algebra with zero differential. The
main result of this section is a classification of the localising
subcategories of the homotopy category of graded-injective \dg
$\Lambda$-modules. It will be deduced from Theorem~\ref{th:DdgS}, via
a \dg Bernstein-Gelfand-Gelfand correspondence from
\cite[\S7]{Avramov/Buchweitz/Iyengar/Miller:hffc}.

\begin{defn}
\label{defn:traceform}
Let $S$ be a graded polynomial algebra over $k$ on indeterminates $x_{1},\dots,x_{c}$ with $|x_{i}|=-|\xi_{i}|+1$ for each $i$. The $k$-algebra $\Lambda\otimes_{k} S$ is graded commutative; view it as a \dg algebra with zero differential. In it consider the element
\[ 
\delta = \sum_{i=1}^{c}\xi_{i}\otimes_{k} x_{i} 
\]
of degree $1$. It is easy to verify that $\delta^{2}=0$ holds. In what follows $J$ denotes the \dg module over $\Lambda\otimes_{k}S$ with underlying graded module and differential given by
\[
\nat J = \Hom_{k}(\Lambda,k)\otimes_{k} S\quad\text{and}\quad d(e)=\delta e.
\]
Observe that since $J$ is a \dg module over $\Lambda\otimes_{k}S$, for each \dg module $M$ over 
$\Lambda$ there is an induced structure of a \dg $S$-module on $\Hom_{\Lambda}(J,M)$. 
\end{defn}

The result below builds on \cite[Theorem
7.4]{Avramov/Buchweitz/Iyengar/Miller:hffc}; see Remark~\ref{rem:bgg}
below. Recall that $\KInj \Lambda$ is the homotopy category of
graded-injective \dg modules over $\Lambda$ and $\sfD(S)$ is the
derived category of \dg modules over $S$.

\begin{theorem}
\label{th:bgg}
The \dg $(\Lambda\otimes_{k}S)$-module $J$ in Definition \emph{\ref{defn:traceform}} has these properties:
\begin{enumerate}[\quad\rm(1)]
\item There is a quasi-isomorphism $k\to J$ of \dg $\Lambda$-modules and $J$ is semi-injective.
\item The map $S\to \Hom_{\Lambda}(J,J)$ induced by right multiplication is a morphism of \dg $k$-algebras and a quasi-isomorphism.
\end{enumerate}
For any \dg $(\Lambda\otimes_{k}S)$-module $J$ satisfying these conditions the functor
\[
\Hom_{\Lambda}(J,-)\col \KInj{\Lambda} \to \sfD(S) 
\]
is an equivalence of triangulated categories.
\end{theorem}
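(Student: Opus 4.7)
The plan is to recognise $J$ as a shift of the classical Koszul resolution of $k$ over $\Lambda$; this delivers both (1) and (2) directly, after which the equivalence follows by applying Lemma~\ref{le:equivalence} with $J$ as a common compact generator of the two sides.

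For (1), Hypothesis~\ref{hyp:A} provides a self-duality $\Hom_k(\Lambda,k)\cong \Sigma^{-d}\Lambda$ of graded $\Lambda$-modules, where $d=\sum_{i=1}^c|\xi_i|$. Under this identification $\nat J$ becomes the free $\nat\Lambda$-module $\Sigma^{-d}(\Lambda\otimes_k S)$, so $J$ is graded-injective, and the differential $d(e)=\delta e$ transports to the Koszul differential $\delta=\sum \xi_i\otimes x_i$ on $\Sigma^{-d}(\Lambda\otimes_k S)$. The Koszul complex $(\Lambda\otimes_k S,\delta)$ is the standard free $\Lambda$-resolution of $k$: a direct inspection, which can be streamlined by filtering via the polynomial degree in $S$, shows that its cohomology is concentrated in the single degree $d$, generated by the class of $\xi_1\cdots\xi_c\otimes 1$. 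After the shift one obtains $H^*(J)=k$ in degree zero, generated by $\epsilon\otimes 1$; this produces the quasi-isomorphism $k\to J$. Since $J^i=0$ for $i\ll 0$, semi-injectivity then follows from the final clause of Lemma~\ref{le:injectives-filtration}.

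For (2), graded commutativity of $\Lambda\otimes_k S$ together with the vanishing of the differential on $S$ makes $s\mapsto(e\mapsto es)$ a morphism of dg $k$-algebras $S\to\Hom_\Lambda(J,J)$. By (1), $J$ is a semi-injective resolution of $k$, so $H^*\Hom_\Lambda(J,J)\cong \Ext^*_\Lambda(k,k)$. Applying the $\Lambda$-Koszul resolution above to compute this Ext algebra gives the classical identification $\Ext^*_\Lambda(k,k)\cong S$, and the isomorphism is induced precisely by the right $S$-action carried by the Koszul complex. Hence the map is a quasi-isomorphism.

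For the final statement, let $J$ be any dg $(\Lambda\otimes_k S)$-module satisfying (1) and (2), and set $F=\Hom_\Lambda(J,-)\colon\KInj\Lambda\to\sfD(S)$, viewed as landing in dg $S$-modules via the right $S$-action on $J$. By (1), $J$ is a semi-injective resolution of $k$, so Theorem~\ref{th:kinj-generator} makes $J$ a compact generator of $\KInj\Lambda$; likewise $S$ is a compact generator of $\sfD(S)$. By (2), $F(J)\simeq S$ and the induced map $\End^*_{\KInj\Lambda}(J)\to\End^*_{\sfD(S)}(S)$ is an isomorphism. Compactness of $J$ in $\KInj\Lambda$ implies that $F$ preserves coproducts, which is verified by passing to cohomology and invoking the fact that $\Hom^*_{\KInj\Lambda}(J,-)$ commutes with coproducts. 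Lemma~\ref{le:equivalence} then yields the equivalence. The main hurdle is the degree bookkeeping in (1): identifying $J$ with a shift of the Koszul resolution via the self-duality of $\Lambda$ requires care, but once it is in place the rest of the argument proceeds routinely.
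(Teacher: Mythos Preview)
Your argument is correct and, for the final equivalence, follows exactly the paper's route: $J$ is a compact generator of $\KInj\Lambda$ by Theorem~\ref{th:kinj-generator}, $S$ is a compact generator of $\sfD(S)$, condition~(2) matches the endomorphism rings, and Lemma~\ref{le:equivalence} finishes.

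The difference lies in how (1) and (2) are handled. The paper simply identifies $J$ with the module $X$ of \cite[\S7.3]{Avramov/Buchweitz/Iyengar/Miller:hffc} and quotes \cite[\S\S7.6.2, 7.6.5 and Theorem~7.4]{Avramov/Buchweitz/Iyengar/Miller:hffc} for the quasi-isomorphisms and semi-injectivity. You instead give a self-contained argument: use the Frobenius isomorphism $\Hom_k(\Lambda,k)\cong\Sigma^{-d}\Lambda$ to recognise $J$ as a shift of the Koszul complex $(\Lambda\otimes_k S,\delta)$, compute its cohomology directly, and invoke Lemma~\ref{le:injectives-filtration} for semi-injectivity; then read off $\Ext^*_\Lambda(k,k)\cong S$ from the same resolution. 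This buys independence from the external reference at the cost of the degree bookkeeping you flag. One point worth tightening is the last step of (2): you should check explicitly that the composite $S\to H^*\Hom_\Lambda(J,J)\cong\Ext^*_\Lambda(k,k)$ coming from right multiplication agrees with the Koszul identification, not merely that both sides are abstractly isomorphic to $S$; this is routine but is where the argument actually lives.
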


\begin{proof}
The surjection $\Lambda\to k$ is a morphism of \dg $\Lambda$-modules
and hence so is its dual $k\to \Hom_{k}(\Lambda,k)$. Combined with the
map of $k$-vector spaces $k\to S$ one gets a morphism $k\to
\Hom_{k}(\Lambda,k)\otimes_{k}S=J$ of \dg $\Lambda$-modules. The
module $J$ is precisely the \dg module $X$ from
\cite[\S7.3]{Avramov/Buchweitz/Iyengar/Miller:hffc}. It thus follows
from \cite[\S\S7.6.2,7.6.5]{Avramov/Buchweitz/Iyengar/Miller:hffc}
that $k\to J$ is a quasi-isomorphism and that $J$ is semi-injective
as a \dg $\Lambda$-module. Moreover, the map $S\to
\Hom_{\Lambda}(J,J)$ is a quasi-isomorphism by \cite[Theorem
7.4]{Avramov/Buchweitz/Iyengar/Miller:hffc}. The module $J$ thus has
the stated properties.

Let $J$ be any \dg $(\Lambda\otimes_{k}S)$-module satisfying conditions (1) and (2) in the statement of the theorem. It is easy to verify that the functor $\Hom_{\Lambda}(J,-)$ from $\KInj{\Lambda}$ to $\sfD(S)$ is exact. We claim that $J$ is compact and generates the triangulated category $\KInj{\Lambda}$. One way to prove this is to note that $\Lambda$ satisfies Hypothesis~\ref{hyp:A}, with comultiplication defined by $\xi_{i}\mapsto \xi_{i}\otimes 1 + 1 \otimes \xi_{i}$, so that Theorem~\ref{th:kinj-generator} applies. Compactness yields that the functor $\Hom_{\Lambda}(J,-)$ preserves coproducts, since a quasi-isomorphism between \dg
$S$-modules is an isomorphism in $\sfD(S)$. Furthermore, as $S$ is a compact generator for $\sfD(S)$, condition (2) provides the hypotheses required to apply Lemma~\ref{le:equivalence}, which yields that $\Hom_\Lambda(J,-)$ is an equivalence.
\end{proof}

\begin{remark}
\label{rem:bgg}
Let $F \col \sfD(S)\to \sfK(S)$ be a left adjoint to the canonical localisation functor $\sfK(S)\to \sfD(S)$, and $-\lotimes_{S}J$ the composite functor 
\[ 
\sfD(S)\xrightarrow{F}\sfK(S) \xrightarrow{-\otimes_{S}J}\KInj \Lambda.
\]
The proof of Theorem~\ref{th:bgg} shows that $-\lotimes_{S}J$ is left
adjoint to $\Hom_{\Lambda}(J,-)$, so restricting the equivalence in
Theorem~\ref{th:bgg} to compact objects yields the equivalence
$\sfD^{\mathsf f}(\Lambda)\xra{\sim}\sfD^{\mathsf f}(S)$ contained in
\cite[Theorem 7.4]{Avramov/Buchweitz/Iyengar/Miller:hffc}.
\end{remark}

As in the proof of Theorem~\ref{th:bgg} consider $\Lambda$ as \dg Hopf $k$-algebra with
\[ 
\Delta(\xi_i)=\xi_i\otimes 1 + 1 \otimes \xi_i. 
\]
Observe that $\Lambda$ satisfies Hypothesis~\ref{hyp:A}, so the triangulated category $\KInj \Lambda$ has a canonical tensor triangulated structure, by Proposition~\ref{pr:dgh-tt}, and hence a canonical action of $\Ext^{*}_{\Lambda}(k,k)$; see Remark~\ref{rem:dgh-action}. Moreover, the $k$-algebra $\Ext^{*}_{\Lambda}(k,k)$ is noetherian, by Theorem~\ref{th:bgg}(2).

\begin{theorem}
\label{th:Lambda-strat}
The tensor triangulated category $\KInj{\Lambda}$ is stratified by the canonical action of $\Ext^{*}_{\Lambda}(k,k)$. Therefore the maps
\[ 
\left\{
\begin{gathered}
\text{localising subcategories of $\KInj{\Lambda}$}
\end{gathered}
\right\}
\xymatrix{
\ar@<1ex>[r]^\sigma & \ar@<1ex>[l]^\tau}
\{\text{subsets of $\Spec \Ext^{*}_{\Lambda}(k,k)$}\} 
\]
described in \eqref{eq:classification} are mutually inverse bijections.
\end{theorem}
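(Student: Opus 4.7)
The strategy is to transfer the stratification of $\sfD(S)$ established in Theorem~\ref{th:DdgS} across the equivalence $\Phi=\Hom_{\Lambda}(J,-)\col\KInj\Lambda\xra{\sim}\sfD(S)$ provided by Theorem~\ref{th:bgg}.

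First I would observe that, by Theorem~\ref{th:bgg}(1), $J$ is a semi-injective resolution of $k$ over $\Lambda$, so one may take $\ik=J$; then $\Phi(\ik)=\Hom_{\Lambda}(J,J)$, which by Theorem~\ref{th:bgg}(2) is isomorphic to $S$ in $\sfD(S)$. In particular, $\Phi$ sends the unit of the canonical tensor on $\KInj\Lambda$ to the unit of the canonical tensor on $\sfD(S)$, and it induces an isomorphism of graded rings
\[
\mu\col\Ext^{*}_{\Lambda}(k,k)\xra{\sim}\End^{*}_{\KInj\Lambda}(\ik)\xra{\sim}\End^{*}_{\sfD(S)}(S)=S.
\]

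Next I would push the canonical tensor structure on $\KInj\Lambda$ forward along $\Phi$ to obtain a second tensor triangulated structure on $\sfD(S)$, again with unit $S$, and likewise push forward the canonical action $\psi$ of $\Ext^{*}_{\Lambda}(k,k)$. Via $\mu$ this push-forward becomes a second action $\phi'\col S\to Z^{*}\sfD(S)$, alongside the canonical $S$-action $\phi$ coming from multiplication. On the unit $S$, both $\phi$ and $\phi'$ factor through the identity map $S=\End^{*}_{\sfD(S)}(S)$, so they agree there; this is tautological for $\phi$ and is essentially the definition of $\mu$ for $\phi'$. Since $S$ is a compact generator of $\sfD(S)$, Lemma~\ref{le:2actions} combined with Theorem~\ref{th:DdgS} implies that $\sfD(S)$ is stratified through $\phi'$ as well, which through $\mu$ says exactly that $\sfD(S)$ is stratified through the push-forward of $\psi$. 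Transporting back along the equivalence $\Phi$ yields that $\KInj\Lambda$ is stratified by the canonical action of $\Ext^{*}_{\Lambda}(k,k)$.

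The bijection in the second half of the statement is then immediate from Theorem~\ref{th:transfer-strat}: since $\ik$ generates $\KInj\Lambda$ by Theorem~\ref{th:kinj-generator}, every localising subcategory is automatically tensor ideal, and since $\Phi$ preserves supports after the identification $\mu$, the support $\supp_{R}\KInj\Lambda$ (for $R=\Ext^{*}_{\Lambda}(k,k)$) equals $\supp_{S}\sfD(S)=\Spec S=\Spec R$. The main obstacle in this plan is the bookkeeping required to check that $\phi$ and $\phi'$ really do agree on the unit after the various identifications; this boils down to unwinding how the unit-isomorphism $\ik\otimes_{k}X\xra{\sim}X$ on $\KInj\Lambda$ is transported across $\Phi$ and matches the corresponding isomorphism $S\lotimes_{S}Y\xra{\sim}Y$ on $\sfD(S)$.
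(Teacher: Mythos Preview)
Your proposal is correct and follows essentially the same route as the paper: transfer the stratification of $\sfD(S)$ from Theorem~\ref{th:DdgS} across the equivalence of Theorem~\ref{th:bgg}, using the isomorphism $\mu\col\Ext^{*}_{\Lambda}(k,k)\cong S$ and Lemma~\ref{le:2actions} to reconcile the two actions on the compact generator, then invoke Theorem~\ref{th:transfer-strat}. The only cosmetic difference is that you push the tensor structure and action forward to $\sfD(S)$ before applying Lemma~\ref{le:2actions}, whereas the paper phrases the same step as transporting the stratification back to $\KInj\Lambda$; the bookkeeping you flag at the end is exactly what Lemma~\ref{le:2actions} absorbs in either direction.
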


\begin{proof}
Let $S$ and $J$ be as in Definition~\ref{defn:traceform}. In view of
Theorem~\ref{th:bgg}(1) we identify $\Ext^{*}_{\Lambda}(k,k)$ and
$\Hom_{\KInj \Lambda}(J,J)$. Theorem~\ref{th:DdgS} applies to $S$ and
yields that $\sfD(S)$ is stratified by the canonical $S$-action on it,
and hence also by an action of $\Ext^{*}_{\Lambda}(k,k)$ obtained from
the isomorphism $S\cong \Ext^{*}_{\Lambda}(k,k)$ in
Theorem~\ref{th:bgg}(2).  Theorem~\ref{th:bgg} provides an equivalence
of triangulated categories $\KInj{\Lambda} \to \sfD(S)$ that sends
$J$, the compact generator of $\KInj \Lambda$, to $S$, the compact generator
of $\sfD(S)$.  Hence $\KInj{\Lambda}$ is stratified by the canonical action of
$\Ext^{*}_{\Lambda}(k,k)$, by Lemma~\ref{le:2actions}.

By Theorem~\ref{th:transfer-strat} the stated bijection is a consequence of the stratification.
\end{proof}

\section{A Koszul \dg algebra}
\label{se:Lambda-to-A}

In this section we classify the localising subcategories of the homotopy category of graded-injective \dg modules over the Koszul \dg algebra of the group algebra of an elementary abelian group. To this end we establish an equivalence with the corresponding homotopy category over an exterior algebra, covered by Theorem~\ref{th:Lambda-strat}.

Let $E$ be an elementary abelian $p$-group of rank $r$ and let $k$ be a field of characteristic $p$. The group algebra $kE$ is thus of the form
\[ 
kE=k[z_1,\dots,z_r]/(z_1^p,\dots,z_r^p). 
\]
Let $A$ be the \dg algebra with $\nat A$ an exterior algebra over $kE$ on generators $y_{1},\dots,y_{r}$ each of degree $-1$, and differential defined by $d(y_{i})=z_{i}$, so that $d(z_{i})=0$. This is the Koszul complex of $A$, viewed as a \dg algebra; see \cite[\S1.6]{Bruns/Herzog:1998a}.

The group algebra $kE$ is an example of a complete intersection, and
the Koszul \dg algebra of such a ring is formal, with cohomology an
exterior algebra; see \cite[\S2.3]{Bruns/Herzog:1998a}. This
computation is straightforward for the case of $kE$ and is given
below.

\begin{lemma}
\label{le:qi}
Let $\Lambda$ be an exterior algebra over $k$ on indeterminates $\xi_{1},\dots,\xi_{r}$ of degree $-1$, viewed as a \dg algebra with zero differential. The morphism $\vf\col \Lambda\to A$ of \dg $k$-algebras defined by $\vf(\xi_{i})=z_i^{p-1}y_i$ is a quasi-isomorphism. In particular, the $k$-algebra $\Ext^{*}_{A}(k,k)$ is a polynomial ring in $r$ indeterminates of degree $2$.
\end{lemma}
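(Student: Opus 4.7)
The plan is to first verify that $\varphi$ is a well-defined morphism of dg $k$-algebras, then compute $H^*(A)$ directly by a Künneth decomposition, and finally deduce the structure of $\Ext^*_A(k,k)$ by transport along $\varphi$.

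For well-definedness, note that each $z_i^{p-1}y_i$ is a cycle in $A$, since $d(z_i^{p-1}y_i) = z_i^{p-1}\,d(y_i) = z_i^p = 0$, and has (cohomological) degree $-1$. Moreover $(z_i^{p-1}y_i)^2 = 0$ because $y_i^2 = 0$, and the elements $z_i^{p-1}y_i$ pairwise anticommute since the $y_i$ do and the $z_j$ are central in $\nat A$. These relations are exactly the relations of $\Lambda$, so $\varphi$ extends to a well-defined morphism of dg $k$-algebras.

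To show $\varphi$ is a quasi-isomorphism, decompose $A$ as a tensor product of dg $k$-algebras
\[
A \;\cong\; A_1 \otimes_k \cdots \otimes_k A_r, \qquad A_i \;=\; k[z_i]/(z_i^p) \otimes_k \Lambda(y_i),
\]
with $d(y_i) = z_i$ in the $i$-th factor. For each $i$, a direct computation of the tiny complex $A_i$ shows that in degree $0$ the cycles are $k\cdot 1$ and the boundaries are $(z_i,\dots,z_i^{p-1})$, while in degree $-1$ the only cycle is $z_i^{p-1}y_i$ and there are no boundaries; in all other degrees $A_i$ is zero. Thus $H^*(A_i)$ is an exterior algebra on the class $[z_i^{p-1}y_i]$ of degree $-1$. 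Since we are over a field, the Künneth formula gives an isomorphism of graded $k$-algebras
\[
H^*(A) \;\cong\; \bigotimes_{i=1}^r H^*(A_i),
\]
which is precisely the exterior algebra on the classes $[z_i^{p-1}y_i]$. As $\Lambda$ has zero differential, $\varphi$ sends the generators $\xi_i$ of $\Lambda$ to these cohomology classes, and the induced map $\Lambda \to H^*(A)$ is an isomorphism. Hence $\varphi$ is a quasi-isomorphism.

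For the last sentence, observe that $\Lambda$ and $A$ both satisfy Hypothesis~\ref{hyp:A} (finite dimensional over $k$, concentrated in non-positive degrees, with local degree-zero part $k$ and $kE$ respectively, and cocommutative Hopf structures given on $\Lambda$ by $\xi_i \mapsto \xi_i\otimes 1 + 1\otimes\xi_i$ and on $A$ coming from the standard Hopf structure on $kE$ together with the same primitive rule on the $y_i$). Applying Proposition~\ref{pr:kinj-quism} to the quasi-isomorphism $\varphi$ produces an equivalence $\KInj{A}\xrightarrow{\sim}\KInj{\Lambda}$ sending a semi-injective resolution of $k$ to a semi-injective resolution of $k$, so it induces an isomorphism of graded $k$-algebras $\Ext^*_A(k,k)\cong\Ext^*_\Lambda(k,k)$. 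The right-hand side is already identified in \S\ref{se:bgg} with the polynomial ring $k[x_1,\dots,x_r]$ on $r$ generators of degree $2$, which gives the second statement.

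The only non-routine step is the cycle-and-boundary bookkeeping in the Künneth computation; everything else (well-definedness, transport of $\Ext$, and the known $\Ext$ computation for $\Lambda$) is a direct appeal to prior results in the paper.
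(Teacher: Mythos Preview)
Your proof is correct and follows essentially the same approach as the paper: both reduce to the rank-one case via the tensor decomposition $A \cong A_1 \otimes_k \cdots \otimes_k A_r$ (the paper phrases this as $\vf=\vf(1)\otimes_k\cdots\otimes_k\vf(r)$ with each $\vf(i)$ a quasi-isomorphism), and then invoke Theorem~\ref{th:bgg}(2) for the structure of $\Ext^*_\Lambda(k,k)$. Two cosmetic slips worth fixing: in degree $0$ of $A_i$ \emph{all} elements are cycles (not just $k\cdot 1$), though your $H^0$ is correct; and Proposition~\ref{pr:kinj-quism} applied to $\vf\colon\Lambda\to A$ yields the equivalence in the direction $\KInj{\Lambda}\to\KInj{A}$, not the one you wrote.
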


\begin{proof}
A routine calculation shows that $\vf$ is a morphism of \dg
$k$-algebras. What needs to be verified is that it is a
quasi-isomorphism, and this is determined only by the structure of
$\Lambda$ and $A$ as complexes of $k$-vector spaces.

Let $\Lambda(i)$ be the exterior algebra on the variable $\xi_{i}$ and $A(i)$ the Koszul \dg algebra over $k[z_{i}]/(z_{i}^{p})$, with exterior generator $y_{i}$. Observe that $\vf=\vf(1)\otimes_{k}\cdots\otimes_{k}\vf(r)$ where $\vf(i)\col \Lambda(i)\to A(i)$ is the morphism of complexes mapping $\xi_{i}$ to $z_{i}^{p-1}y_{i}$. Each $\vf(i)$ is a quasi-isomorphism, by inspection, and hence so is $\vf$.

Since $\vf$ is a quasi-isomorphism the $k$-algebras $\Ext^{*}_{\Lambda}(k,k)$ and $\Ext^{*}_{A}(k,k)$ are isomorphic.  Theorem~\ref{th:bgg}(2) implies $\Ext^{*}_{A}(k,k)$ has the stated structure.
\end{proof}

We endow the  \dg algebra $A$ with a comultiplication
\[ 
\Delta(z_i)=z_i\otimes 1 + 1 \otimes z_i\quad\text{and}\quad
\Delta(y_i)=y_i\otimes 1 + 1 \otimes y_i.
\]
With this structure $A$ satisfies Hypothesis~\ref{hyp:A}. The homotopy
category of graded-injective \dg $A$-modules $\KInj A$ has thus a
tensor triangulated structure and a canonical action of
$\Ext^{*}_{A}(k,k)$; see Proposition~\ref{pr:dgh-tt} and
Remark~\ref{rem:dgh-action}.

\begin{theorem}
\label{th:strat-A}
The tensor triangulated category $\KInj{A}$ is stratified by the canonical action of  $\Ext^{*}_{A}(k,k)$.
Therefore the maps
\[ 
\left\{
\begin{gathered}
\text{localising subcategories of $\KInj{A}$}
\end{gathered}
\right\}
\xymatrix{
\ar@<1ex>[r]^\sigma & \ar@<1ex>[l]^\tau}
\{\text{subsets of $\Spec \Ext^{*}_{A}(k,k)$}\} 
\]
described in \eqref{eq:classification} are mutually inverse bijections.
\end{theorem}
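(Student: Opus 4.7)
The plan is to deduce this from Theorem~\ref{th:Lambda-strat} by transferring the stratification along the quasi-isomorphism $\vf\col \Lambda \to A$ constructed in Lemma~\ref{le:qi}. The key tool is Proposition~\ref{pr:strat-quism}, which is formulated precisely so as not to require $\vf$ to commute with the comultiplications on $\Lambda$ and $A$. This flexibility is essential here, since the element $\vf(\xi_i) = z_i^{p-1}y_i$ has no reason to be primitive for the Hopf structure on $A$ specified above Theorem~\ref{th:strat-A}, whereas $\xi_i$ is primitive in $\Lambda$.

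To invoke Proposition~\ref{pr:strat-quism} I would first verify Hypothesis~\ref{hyp:A} for $A$. Conditions (1) and (2) are immediate: $A$ is concentrated in non-positive degrees with $\dim_k A = 2^r p^r$, and $A^0 = kE$ is local with residue field $k$. For condition (3), the specified $\Delta$ is coassociative and cocommutative by inspection, and compatibility with the differential follows from
\[
d(\Delta(y_i)) = d(y_i) \otimes 1 + 1 \otimes d(y_i) = z_i \otimes 1 + 1 \otimes z_i = \Delta(d(y_i)).
\]
The analogous verification for $\Lambda$ has already been recorded during the proof of Theorem~\ref{th:Lambda-strat}.

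Given this, Proposition~\ref{pr:strat-quism} applied to $\vf$ transfers the stratification of $\KInj{\Lambda}$ by $\Ext^{*}_{\Lambda}(k,k)$ supplied by Theorem~\ref{th:Lambda-strat} to a stratification of $\KInj A$ by $\Ext^{*}_{A}(k,k)$, completing the first assertion. For the bijection, Theorem~\ref{th:kinj-generator} gives $\Loc(\ik) = \KInj A$, so the unit generates $\KInj A$ and therefore every localising subcategory is automatically tensor ideal; the bijection $\sigma, \tau$ is then delivered by Theorem~\ref{th:transfer-strat}. I do not expect a serious obstacle here, since the genuinely hard work was already done in earlier sections: the \dg BGG correspondence of Theorem~\ref{th:bgg}, the Koszul-based stratification of $\sfD(S)$ in Theorem~\ref{th:DdgS}, and the careful treatment of actions versus comultiplications encoded in Lemma~\ref{le:2actions} and Proposition~\ref{pr:strat-quism}.
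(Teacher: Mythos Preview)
Your proposal is correct and follows essentially the same route as the paper: invoke Lemma~\ref{le:qi} for the quasi-isomorphism $\vf\colon\Lambda\to A$, verify Hypothesis~\ref{hyp:A} for both algebras, and then apply Proposition~\ref{pr:strat-quism} together with Theorem~\ref{th:Lambda-strat} to obtain the stratification, with the bijection coming from Theorem~\ref{th:transfer-strat} once one notes that $\ik$ generates so that every localising subcategory is tensor ideal. Your additional remarks (that $\vf$ need not respect the comultiplications, and the explicit check that $\Delta$ is a chain map on $A$) are helpful elaborations but do not alter the structure of the argument.
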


\begin{proof}
Let $\Lambda$ be the exterior algebra from Lemma~\ref{le:qi}. With
comultiplication defined by $\xi_{i}\mapsto \xi_{i}\otimes 1 +
1\otimes \xi_{i}$ this \dg algebra satisfies Hypothesis~\ref{hyp:A}.
The desired result thus follows from Lemma~\ref{le:qi},
Theorem~\ref{th:Lambda-strat} and Proposition~\ref{pr:strat-quism}.
\end{proof}

\section{Elementary abelian groups}
\label{se:A-to-kE}

We classify the localising subcategories of the
homotopy category of injective modules over the group algebra of an
elementary abelian group, by deducing it from the corresponding
statement for its Koszul \dg algebra in Theorem~\ref{th:strat-A}.

Let $E$ be an elementary abelian $p$-group of rank $r$ and $k$ a field
of characteristic $p$.  In this section we view its group algebra
$kE$, which is isomorphic to
\[
k[z_{1},\dots,z_{r}]/(z_{1}^{p},\dots,z_{r}^{p})
\]
as a \dg algebra over $k$ with zero differential. The diagonal map of
$E$ endows $kE$ with a structure of a \dg Hopf $k$-algebra with
comultiplication
\[ 
\Delta(z_i) = z_i \otimes 1 +  z_i \otimes z_i + 1 \otimes z_i.
\]
We call this the \emph{group Hopf structure} on $kE$; it is evidently cocommutative. This Hopf structure is needed in \S\ref{se:kE-to-kG} for the passage from $E$ to a general finite group.

We also have to consider a different cocommutative \dg Hopf algebra
structure on $kE$, where the comultiplication is defined by
\[ 
\Delta(z_i) = z_i\otimes 1 + 1 \otimes z_i. 
\]
We call this the \emph{Lie Hopf structure} on $kE$. It comes from regarding $kE$ as the restricted universal enveloping algebra of an abelian $p$-restricted Lie algebra with zero $p$-power map. The Lie Hopf structure has the advantage that the inclusion of $kE$ into its Koszul \dg algebra is a map of \dg Hopf algebras; this is exploited in the proof of Theorem~\ref{th:strat-kE} below.

The group Hopf structure and the Lie Hopf structure on $kE$ both
satisfy Hypothesis~\ref{hyp:A}, and thus give rise to two tensor
triangulated structures on $\KInj {kE}$. Thus there are two actions of
$\Ext^{*}_{kE}(k,k)$ on $\KInj {kE}$, which we call the group action and
the Lie action, respectively; see Proposition~\ref{pr:dgh-tt} and
Remark~\ref{rem:dgh-action}. The $k$-algebra
$\Ext^{*}_{kE}(k,k)$ is finitely generated and hence noetherian.  Thus the theory of localisation
and support described in \S\ref{se:strat} applies.

\begin{theorem}
\label{th:strat-kE}
The triangulated category $\KInj{kE}$ is stratified by both the group action and the Lie action of $\Ext^{*}_{kE}(k,k)$. The maps $\sigma$ and $\tau$ described in \eqref{eq:classification} do not depend on the action used, and give mutually inverse bijections: 
\[ 
\xymatrix{\{\text{localising subcategories of $\KInj{kE}$}\}
\ar@<.5ex>[r]^-\sigma &
\{\text{subsets of $\Spec \Ext^{*}_{kE}(k,k)$}\}.\ar@<.5ex>[l]^-\tau} 
\]
\end{theorem}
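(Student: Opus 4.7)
The plan is to derive this theorem from the Koszul \dg algebra result Theorem~\ref{th:strat-A} by applying the descent machinery of \S\ref{se:gr-inj}, and then use the independence of stratification from the choice of comultiplication to conclude.

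First I would observe the key structural fact: the Koszul \dg algebra $A$ of the previous section has $A^{0}=kE$, and the comultiplication $\Delta(z_{i})=z_{i}\otimes 1+1\otimes z_{i}$ fixed on $A$ in \S\ref{se:Lambda-to-A} restricts on $A^{0}$ to precisely the \emph{Lie} Hopf structure on $kE$. Thus with the Lie Hopf structure, the inclusion $kE\hookrightarrow A$ is a morphism of \dg Hopf algebras, and $A$ satisfies Hypothesis~\ref{hyp:A} over this $A^{0}$. By Theorem~\ref{th:strat-A}, $\KInj A$ is stratified by the canonical action of $\Ext^{*}_{A}(k,k)$, so Theorem~\ref{th:strat-descent} applies and yields that $\KInj{kE}$ (equipped with the Lie Hopf tensor structure) is stratified by the canonical action of $\Ext^{*}_{kE}(k,k)$.

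Next I would pass from the Lie action to the group action. Since $A^{0}=kE$ satisfies Hypothesis~\ref{hyp:A} under both Hopf structures, and since $\ik$ is a generator of $\KInj{kE}$ by Theorem~\ref{th:kinj-generator}, the hypotheses of Lemma~\ref{le:2actions} apply: the induced maps $\Ext^{*}_{kE}(k,k)\to\End^{*}_{\KInj{kE}}(\ik)$ coincide for the two Hopf structures, as both are the canonical identification. Consequently, by Proposition~\ref{pr:2tensors}(3) the stratification of $\KInj{kE}$ via the canonical action of $\Ext^{*}_{kE}(k,k)$ is independent of the choice of comultiplication, and part~(1) of that proposition gives that the functors $\Gamma_{\mcV}$, $L_{\mcV}$, $\Gamma_{\fp}$ and the maps $\sigma,\tau$ agree under the two actions.

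Finally, since $\KInj{kE}$ is stratified by (either) canonical action, Theorem~\ref{th:transfer-strat} furnishes the bijection between tensor ideal localising subcategories and subsets of $\supp_{\Ext^{*}_{kE}(k,k)}\KInj{kE}$. Because the unit $\ik$ generates $\KInj{kE}$, every localising subcategory is automatically tensor ideal, and the support coincides with all of $\Spec\Ext^{*}_{kE}(k,k)$ (this last point is not strictly needed for the bijection, but identifies the target cleanly). This yields the stated one-to-one correspondence. I do not expect any genuine obstacle here: the substantive content is already encoded in Theorems~\ref{th:strat-A} and~\ref{th:strat-descent} together with Proposition~\ref{pr:2tensors}; the only subtle point is keeping straight which Hopf structure is compatible with the embedding $kE\hookrightarrow A$, which is what forces the first step to proceed via the Lie structure rather than the group structure.
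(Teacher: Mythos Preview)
Your proposal is correct and follows essentially the same route as the paper: invoke Proposition~\ref{pr:2tensors} for independence of the two Hopf structures, then use that $kE=A^{0}$ with the Lie comultiplication compatible with the inclusion into the Koszul \dg algebra $A$, and apply Theorems~\ref{th:strat-A} and~\ref{th:strat-descent}. The paper's proof is terser but the logical skeleton is identical; your explicit invocation of Theorem~\ref{th:transfer-strat} and the remark that the unit generates (so that all localising subcategories are tensor ideal) spell out what the paper leaves implicit.
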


\begin{proof}
Proposition~\ref{pr:2tensors} justifies the statement about independence of actions.  Hence it suffices to consider the Lie Hopf structure on $kE$.

Let $A$ be the Koszul \dg algebra of $kE$ with structure of \dg Hopf $k$-algebra introduced in \S\ref{se:Lambda-to-A}. Observe that $kE=A^{0}$ and that the inclusion $kE \to A$ is compatible with the Lie Hopf structure; this is the reason for working with this Hopf structure on $kE$. It now remains to apply Theorems~\ref{th:strat-A} and \ref{th:strat-descent}.
\end{proof}

\section{Finite groups}
\label{se:kE-to-kG}

In this section we prove that the tensor ideal localising subcategories of the homotopy category of complexes of injective modules over the group algebra of a finite group are stratified by the cohomology of the group.
This is achieved by descending to the case of an elementary abelian group, covered by Theorem~\ref{th:strat-kE}. The crucial new input required here is Quillen's stratification theorem~\cite{Quillen:1971b,Quillen:1971c}.

Let $G$ be a finite group and $k$ a field of characteristic dividing
the order of $G$. The group algebra $kG$ is a Hopf algebra where the
comultiplication is defined by $\Delta(g)=g\otimes g$ for each $g\in
G$.  We consider the homotopy category of complexes of injective
$kG$-modules $\KInj{kG}$.  The diagonal action of $G$ induces a tensor
triangulated structure with unit the injective resolution $ik$ of $k$;
see \cite[\S5]{Benson/Krause:2008a} and also Proposition
\ref{pr:dgh-tt}. As is customary, $H^*(G,k)$ denotes the cohomology of
$G$, which is the $k$-algebra $\Ext^{*}_{kG}(k,k)$. This algebra is
finitely generated, and hence noetherian, by a result of Evens and Venkov \cite{Evens:1961a,Venkov:1959a}; see also Golod \cite{Golod:1959a}. It acts on
$\KInj{kG}$ via the canonical action, and the theory described in
\S\ref{se:strat} applies. We write $\mcV_G$ for
$\Spec H^*(G,k)$ and $\mcV_G(X)$ for the support of any complex $X$ in
$\KInj{kG}$.

For each subgroup $H$ of $G$ restriction yields a homomorphism of graded rings $\res_{G,H}\col H^*(G,k) \to H^*(H,k)$, and hence a map on $\Spec$:
\[ 
\res_{G,H}^*\col \mcV_H \to \mcV_G.
\] 

Part of Quillen's theorem\footnote{See the discussion following
Proposition 11.2 of \cite{Quillen:1971c}.} is that for each $\fp$ in
$\mcV_G$ there exists an elementary abelian subgroup $E$ of $G$ such
that $\fp$ is in the image of $\res_{G,E}^*$. We say that $\fp$
\emph{originates} in such an $E$ if there does not exist a proper
subgroup $E'$ of $E$ such that $\fp$ is in the image of
$\res_{G,E'}^*$. In this language, \cite[Theorem~10.2]{Quillen:1971c}
reads:

\begin{theorem}
\label{th:quillen}
For each $\fp\in\mcV_G$, the pairs $(E,\fq)$ where $\fp=\res_{G,E}^*(\fq)$ and such that $\fp$ originates in $E$ are all $G$-conjugate. This sets up a one to one correspondence between primes $\fp$ in $\mcV_G$ and 
$G$-conjugacy classes of such pairs $(E,\fq)$. \qed
\end{theorem}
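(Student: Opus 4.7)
My plan is to deduce both assertions from Quillen's F-isomorphism theorem, which asserts that the natural ring map
\[
H^*(G,k) \longrightarrow \lim_{E} H^*(E,k)
\]
over the category of elementary abelian $p$-subgroups of $G$ with morphisms generated by inclusions and $G$-conjugations is an F-isomorphism: its kernel is nilpotent and every element of the target has some $p^n$-th power in the image. Such a map is invisible to $\Spec$, so applying $\Spec$ identifies $\mcV_G$ with the colimit of the spectra $\mcV_E$ along the induced diagram of restriction and conjugation maps.

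First I would address existence. By Quillen's surjectivity theorem, a direct corollary of the F-isomorphism, every $\fp \in \mcV_G$ lies in the image of some $\res^*_{G,E}$. Picking such an $E$ of minimal rank and any lift $\fq \in \mcV_E$ of $\fp$, the prime $\fp$ originates in $E$, for a factorisation through any proper subgroup of $E$ would immediately violate the minimality of $\operatorname{rank} E$.

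For uniqueness up to $G$-conjugacy, I would use the colimit description together with the refinement that $\mcV_G$ is stratified as $\bigsqcup_{[E]} \mcV_E^+ / W_G(E)$, where $\mcV_E^+ \subseteq \mcV_E$ consists of primes not lifting to any proper subgroup, $W_G(E) = N_G(E)/C_G(E)$ is the Weyl group, and the union runs over $G$-conjugacy classes of elementary abelian subgroups. Origination of $\fp$ in $E$ is exactly the statement $\fq \in \mcV_E^+$. The bijection above, with $N_G(E)$ realising the Weyl group action, then forces any two originating pairs $(E, \fq)$ and $(E', \fq')$ over $\fp$ to be $G$-conjugate, since any zigzag of Quillen-category morphisms connecting $\fq$ and $\fq'$ and covering $\fp$ must, by origination at both ends, reduce to a chain of conjugation isomorphisms rather than proper inclusions.

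The principal obstacle is the F-isomorphism theorem and its stratified refinement. Their proofs invoke Evens--Venkov finite generation of $H^*(G,k)$, Serre spectral sequence computations for the fibration $BH \to BG$ and its relatives, Chern class and transfer techniques, together with Quillen's core lemma that for an elementary abelian $p$-group $E$ acting on a finite set, the kernel of the product of restriction maps to the stabiliser cohomologies is nilpotent. Once these ingredients are in place, the bijection in the theorem drops out formally from the colimit/stratification description of $\mcV_G$.
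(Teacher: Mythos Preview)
The paper does not prove this theorem at all: it is stated with a \qed and attributed directly to Quillen's original paper \cite[Theorem~10.2]{Quillen:1971c}. So there is no ``paper's own proof'' to compare against; the authors are simply importing a known result.

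Your proposal is a reasonable high-level roadmap of Quillen's argument, but be aware of one circularity. In the uniqueness step you invoke the decomposition $\mcV_G = \bigsqcup_{[E]} \mcV_E^+ / W_G(E)$ as a ``refinement'' of the F-isomorphism theorem and then read off the conjugacy statement from it. But that decomposition \emph{is} the content of the theorem you are asked to prove: the bijection between primes of $\mcV_G$ and $G$-conjugacy classes of pairs $(E,\fq)$ with $\fq\in\mcV_E^+$ is exactly the stratification. So as written, your uniqueness argument assumes its own conclusion. The honest version of your sketch is: existence follows from the F-isomorphism (fine), and uniqueness is the hard part of Quillen's stratification theorem, whose proof requires the ingredients you list in your final paragraph (Evens--Venkov, Serre spectral sequences, the nilpotence lemma for elementary abelian actions, etc.). That is an accurate summary of what goes into Quillen's proof, but it is not itself a proof, and the paper does not expect one.
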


To make use of this we need a subgroup theorem for elementary abelian
groups. As usual given a subgroup $H\le G$ there are exact functors
\[
(-)\da_{H}\col \KInj{kG} \to \KInj{kH} \quad\text{and} \quad
(-)\ua^{G}\col \KInj{kH} \to \KInj{kG}
\]
defined by restriction and induction, $-\otimes_{kH}kG$, respectively. The functor $(-)\ua^{G}$ is faithful and left adjoint to $(-)\da_{H}$. For each object $X$ in $\KInj{kG}$ and object $Y$ in $\KInj{kH}$ there is a natural isomorphism
\begin{equation}
\label{eq:indres}
(X\da_{H}\otimes_{k}Y)\ua^{G}\cong X\otimes_{k} Y\ua^{G}
\end{equation}
in $\KInj{kG}$, where an element $(x\otimes y)\otimes g$ is mapped to $xg\otimes (y\otimes g)$.

\begin{lemma}
\label{le:kappa-res}
Let $H$ be a subgroup of $ G$. Fix $\fp\in\mcV_G$ and set $\mathcal U=(\res_{G,H}^*)^{-1}\{\fp\}$.  
\begin{enumerate}[\quad\rm(1)]
\item 
For any $X\in\KInj{kG}$ there is an isomorphism $(\Gamma_\fp X)\da_H\cong\bigoplus_{\fq\in\mathcal U} \Gamma_\fq (X\da_H)$.
\item
For any $Y\in\KInj{kH}$ there is an isomorphism $\Gamma_\fp(Y\ua^G)\cong \bigoplus_{\fq\in\mathcal U} (\Gamma_\fq Y)\ua^G$.
\end{enumerate}
\end{lemma}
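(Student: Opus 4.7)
The plan is to derive both parts from a restriction-compatibility statement for the local cohomology functors, and then to deduce Part (2) from Part (1) via the projection formula~\eqref{eq:indres}. For any specialisation closed $\mcV\subseteq\mcV_G$, set $\mcV':=(\res_{G,H}^*)^{-1}\mcV$, which is specialisation closed in $\mcV_H$. The first step is to establish natural isomorphisms
\[
(\Gamma_\mcV X)\da_H \cong \Gamma_{\mcV'}(X\da_H) \qquad\text{and}\qquad (L_\mcV X)\da_H \cong L_{\mcV'}(X\da_H).
\]
The functor $(-)\da_H$ is $H^*(G,k)$-linear if $H^*(G,k)$ acts on $\KInj{kH}$ through $\res_{G,H}$, and because $H^*(H,k)$ is module-finite over $H^*(G,k)$, an $H^*(H,k)$-module is $\fp$-torsion or $\fp$-local for the induced $H^*(G,k)$-action precisely when it is $\fq$-torsion or $\fq$-local as an $H^*(H,k)$-module for every $\fq$ lying over $\fp$. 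Hence the $\mcV$-supported subcategory of $\KInj{kH}$ for the $H^*(G,k)$-action coincides with the $\mcV'$-supported subcategory for the $H^*(H,k)$-action, and the associated $\Gamma$ and $L$ functors agree by Proposition~\ref{pr:indep-of-C}.

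For Part (1), choose $\mcV=\mcV(\fp)$ and $\mcW=\mcV\setminus\{\fp\}$ so that $\Gamma_\fp=\Gamma_\mcV L_\mcW$. The compatibility step then gives
\[
(\Gamma_\fp X)\da_H \cong \Gamma_{\mcV'}L_{\mcW'}(X\da_H), \qquad \mcV'\setminus\mcW'=\mathcal U.
\]
Because $H^*(H,k)$ is finite over $H^*(G,k)$, the fibre $\mathcal U$ is finite and consists of pairwise incomparable primes. Using the antichain property one verifies $\mcV(\fq)\subseteq\mcV'$ and $\mcV(\fq)\setminus\{\fq\}\subseteq\mcW'$ for every $\fq\in\mathcal U$, which yield natural transformations $\Gamma_\fq\to\Gamma_{\mcV'}L_{\mcW'}$. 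They assemble into a morphism
\[
\Phi\col \bigoplus_{\fq\in\mathcal U}\Gamma_\fq(X\da_H) \longrightarrow \Gamma_{\mcV'}L_{\mcW'}(X\da_H).
\]
Applying $\Gamma_{\fq'}$ for any $\fq'\in\mathcal U$: the right-hand side yields $\Gamma_{\fq'}(X\da_H)$ because $\Gamma_{\fq'}\Gamma_{\mcV'}L_{\mcW'}=\Gamma_{\fq'}$ (since $\fq'\in\mcV'$ and $\fq'\notin\mcW'$); on the left, $\supp_R\Gamma_\fq(X\da_H)\subseteq\{\fq\}$ forces $\Gamma_{\fq'}\Gamma_\fq(X\da_H)=0$ for $\fq\neq\fq'$, so only the summand indexed by $\fq'$ contributes, and the induced map on $\Gamma_{\fq'}$ is the identity. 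Thus the cone of $\Phi$ has empty support, and so is zero by the local-global principle (Theorem~\ref{th:local-global}); hence $\Phi$ is an isomorphism.

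For Part (2), the projection formula~\eqref{eq:indres} combined with Part (1) applied to $X=\one$ yields
\[
\Gamma_\fp(Y\ua^G) \cong Y\ua^G\otimes_k\Gamma_\fp\one \cong \bigl(Y\otimes_k(\Gamma_\fp\one)\da_H\bigr)\ua^G \cong \bigoplus_{\fq\in\mathcal U}\bigl(Y\otimes_k\Gamma_\fq\one\bigr)\ua^G \cong \bigoplus_{\fq\in\mathcal U}(\Gamma_\fq Y)\ua^G,
\]
using that $(-)\ua^G$ and $-\otimes_k Y$ preserve coproducts and that $\Gamma_\fq Y\cong\Gamma_\fq\one\otimes_k Y$. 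The main obstacle lies in Part (1): establishing the compatibility of restriction with $\Gamma_\mcV$ and $L_\mcV$, and then separating the finite antichain $\mathcal U$ into its individual prime contributions. Part (2) is then a formal consequence of the projection formula.
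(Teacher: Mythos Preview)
Your Part~(2) is identical to the paper's. For Part~(1) the paper reduces at once to $X=ik$ via $\Gamma_\fp X\cong\Gamma_\fp ik\otimes_k X$ and cites \cite[Lemma~8.2, Proposition~8.4(iv)]{Benson/Carlson/Rickard:1996a} for the isomorphism $(\Gamma_\fp ik)\da_H\cong\bigoplus_{\fq\in\mathcal U}\Gamma_\fq ik$; you instead work with general $X$, first establishing $(\Gamma_\mcV X)\da_H\cong\Gamma_{\mcV'}(X\da_H)$ and $(L_\mcV X)\da_H\cong L_{\mcV'}(X\da_H)$ via module-finiteness of $H^*(H,k)$ over $H^*(G,k)$, and then decomposing over the finite antichain $\mathcal U=\mcV'\setminus\mcW'$ by assembling a comparison map whose cone has empty support. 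Your route is more self-contained---it avoids the external citation and makes the mechanism (finite fibre, incomparable primes, local-global principle) explicit. One step deserves a little more care, however: your paragraph on the compatibility isomorphisms only shows that, \emph{within} $\KInj{kH}$, the functor $\Gamma_\mcV$ for the induced $H^*(G,k)$-action and the functor $\Gamma_{\mcV'}$ for the $H^*(H,k)$-action coincide; it does not by itself explain why restriction from $\KInj{kG}$ to $\KInj{kH}$ intertwines with $\Gamma_\mcV$ and $L_\mcV$. For that one must also check that $(-)\da_H$ sends $\mcV$-torsion objects to $\mcV$-torsion objects and $\mcV$-local objects to $\mcV$-local objects, which follows by pushing these conditions across the $H^*(G,k)$-linear adjunction with induction.
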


\begin{proof}
(1) For $X=ik$, this follows as in \cite[Lemma 8.2 and Proposition 8.4
  (iv)]{Benson/Carlson/Rickard:1996a}.  The general case is deduced as
follows:
\begin{align*} 
(\Gamma_\fp X)\da_H &\cong (\Gamma_\fp ik\otimes_k
X)\da_H\cong(\Gamma_\fp ik)\da_H\otimes_k X\da_H\\
&\cong\bigoplus_{\fq\in\mathcal U} \Gamma_\fq ik\otimes_k
X\da_H\cong\bigoplus_{\fq\in\mathcal U} \Gamma_\fq (X\da_H).
\end{align*}

(2) Using part (1) and \eqref{eq:indres}, one gets isomorphisms
\begin{align*} 
\Gamma_\fp (Y\ua^G)&\cong\Gamma_\fp ik \otimes_k Y\ua^G 
\cong((\Gamma_\fp ik)\da_H \otimes_k Y)\ua^G \\
&\cong\bigoplus_{\fq\in\mathcal U}(\Gamma_\fq ik \otimes_k Y)\ua^G 
\cong\bigoplus_{\fq\in\mathcal U}(\Gamma_\fq Y)\ua^G.
\qedhere
\end{align*}
\end{proof}

\begin{proposition}
\label{pr:V-ind}
Let $H$ be a subgroup of $G$. For any object $X$ in $\KInj{kG}$ and object $Y$ in $\KInj{kH}$ one has
\[
\mcV_G(X\da_H\ua^G)\subseteq\mcV_G(X)\quad\text{and}\quad
\mcV_G(Y\ua^G)=\res_{G,H}^*\mcV_H(Y).
\]
\end{proposition}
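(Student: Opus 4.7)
The plan is to deduce both assertions directly from Lemma~\ref{le:kappa-res}, using that the induction functor $(-)\ua^{G}$ is faithful (in particular, it detects zero objects).

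For the second assertion, which is the more substantial one, I would argue as follows. A prime $\fp$ lies in $\mcV_G(Y\ua^G)$ if and only if $\Gamma_\fp(Y\ua^G)\ne 0$. By Lemma~\ref{le:kappa-res}(2), one has
\[
\Gamma_\fp(Y\ua^G)\cong \bigoplus_{\fq\in\mathcal U}(\Gamma_\fq Y)\ua^G,
\]
where $\mathcal U=(\res_{G,H}^*)^{-1}\{\fp\}$. Since $(-)\ua^G$ is faithful, a summand $(\Gamma_\fq Y)\ua^G$ vanishes precisely when $\Gamma_\fq Y$ vanishes. Therefore $\Gamma_\fp(Y\ua^G)\ne 0$ if and only if there exists some $\fq\in\mathcal U$ with $\Gamma_\fq Y\ne 0$, i.e.\ some $\fq\in\mcV_H(Y)$ with $\res_{G,H}^*(\fq)=\fp$. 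This is exactly the statement $\fp\in\res_{G,H}^*\mcV_H(Y)$.

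For the first assertion, suppose $\fp\in\mcV_G(X\da_H\ua^G)$. Applying the equality just proved (with $Y=X\da_H$) gives some $\fq$ with $\res_{G,H}^*(\fq)=\fp$ and $\Gamma_\fq(X\da_H)\ne 0$. But by Lemma~\ref{le:kappa-res}(1) this nonzero summand sits inside
\[
(\Gamma_\fp X)\da_H\cong\bigoplus_{\fq'\in\mathcal U}\Gamma_{\fq'}(X\da_H),
\]
so $(\Gamma_\fp X)\da_H\ne 0$, and in particular $\Gamma_\fp X\ne 0$, showing $\fp\in\mcV_G(X)$.

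I do not foresee a real obstacle here: both statements reduce to a two-line bookkeeping argument once Lemma~\ref{le:kappa-res} is in hand. The only point requiring care is justifying that $(-)\ua^G$ reflects zero objects; this is immediate from its being faithful (it was noted as such just before \eqref{eq:indres}), or alternatively from the fact that $Y\ua^G=Y\otimes_{kH}kG$ has $Y$ as a direct summand of its restriction back to $H$.
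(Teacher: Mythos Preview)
Your argument is correct. For the second assertion you do exactly what the paper does: invoke Lemma~\ref{le:kappa-res}(2) and the faithfulness of induction.

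For the first assertion you take a different route from the paper. The paper proves an isomorphism $X\da_H\ua^G\cong X\otimes_k W$ in $\KInj{kG}$, where $W$ is an injective resolution of the permutation module $k(G/H)$; this is checked first on compact objects $iM$ and then propagated. The inclusion $\mcV_G(X\da_H\ua^G)\subseteq\mcV_G(X)$ is then read off from $\Gamma_\fp(X\otimes_k W)\cong\Gamma_\fp X\otimes_k W$. Your approach instead recycles the second assertion (applied to $Y=X\da_H$) together with Lemma~\ref{le:kappa-res}(1), which is shorter and avoids the permutation-module isomorphism entirely. The trade-off is that the paper's isomorphism $X\da_H\ua^G\cong X\otimes_k W$ is not merely a device for this proposition: it is invoked again in the proof of Theorem~\ref{th:KInj-min}, so the paper is establishing it here for later use. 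If you adopt your argument, that isomorphism would need to be recorded separately before Theorem~\ref{th:KInj-min}.
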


\begin{proof}
Let $W$ be an injective resolution of the permutation module, $k(G/H)$, on the cosets of $H$ in $G$. We claim that the natural map $k(G/H)\to W$ of complexes of $kG$-modules induces an isomorphism $X\da_H\ua^G \cong X\otimes_k W$ in $\KInj{kG}$.

Indeed, this is easy to check when $X=iM$, the injective resolution of a finite dimensional $kG$-module $M$, for then both $iM\da_{H}\ua^{G}$ and $iM\otimes_{k}W$ are injective resolutions of $M\otimes_{k}k(G/H)$. It then remains to note that complexes of the form $iM$ are a set of compactly generators for $\KInj{kG$}; see \cite[Lemmas 2.1 and 2.2]{Krause:2005a}.

When $\fp\in\mcV_G(X \otimes_k W)$ holds, since $\Gamma_\fp(X \otimes_k W)\cong \Gamma_\fp X \otimes_k W$ one gets $\Gamma_\fp X \ne 0$, that is to say, $\fp \in \mcV_G(X)$, as desired.

By Lemma~\ref{le:kappa-res}(2) the condition $\fp \in \mcV_G(Y\ua^G)$  is equivalent to: there exists $\fq\in\mcV_H$ such that $\res_{G,H}^*(\fq)=\fp$ and $\Gamma_\fq Y \ne 0$. Hence $\mcV_G(Y\ua^G)=\res_{G,H}^*\mcV_H(Y)$.
\end{proof}

The result below is an analogue of the subgroup theorem for an elementary abelian group $E$. Its proof is based on  the classification of localising subcategories of $\KInj{kE}$. The full version of the subgroup theorem, Theorem~\ref{th:subgp-full}, will be a consequence of the classification theorem for $\KInj{kG}$.

\begin{theorem}
\label{th:subgp-elem} 
Let $E'\le E$ be elementary abelian $p$-groups. For any object $X$ in $\KInj{kE}$ there is an equality
\[ 
\mcV_{E'}(X\da_{E'})=(\res_{E,E'}^*)^{-1}\mcV_E(X). 
\]
\end{theorem}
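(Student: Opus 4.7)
The plan is to prove the two inclusions separately. The inclusion $\subseteq$ is essentially immediate from Lemma~\ref{le:kappa-res}(1). If $\fq$ lies in $\mcV_{E'}(X\da_{E'})$, set $\fp=\res_{E,E'}^{*}(\fq)$ and $\mathcal U=(\res_{E,E'}^{*})^{-1}\{\fp\}$. Then Lemma~\ref{le:kappa-res}(1) exhibits $\Gamma_\fq(X\da_{E'})$ as a summand of $(\Gamma_\fp X)\da_{E'}$, so non-vanishing of the summand forces $\Gamma_\fp X\ne0$, i.e.\ $\fp\in\mcV_E(X)$.

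For the reverse inclusion I fix $\fq$ with $\fp:=\res_{E,E'}^{*}(\fq)$ in $\mcV_E(X)$ and try to produce $\Gamma_\fq(X\da_{E'})\ne 0$. The decomposition of Lemma~\ref{le:kappa-res}(1) alone is not enough, because it only tells us that the direct sum over the fibre $\mathcal U$ is non-zero. The plan is to replace $X$ by $\ik$ near $\fp$ using the stratification of $\KInj{kE}$ given by Theorem~\ref{th:strat-kE}. Namely, $\Gamma_\fp X$ is a non-zero object of the minimal tensor ideal localising subcategory $\Gamma_\fp\KInj{kE}$, and $\Gamma_\fp \ik$ is also a non-zero object of this subcategory, the latter because $\ik$ is a compact generator of $\KInj{kE}$ with $\End^*(\ik)=H^*(E,k)$, so $\supp\ik=\mcV_E$. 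Minimality of $\Gamma_\fp\KInj{kE}$ then gives
\[
\Gamma_\fp \ik \;\in\; \Loc^{\otimes}(\Gamma_\fp X).
\]

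Now I apply the restriction functor $(-)\da_{E'}$; it is exact, preserves coproducts, and is symmetric monoidal, so it sends tensor ideal localising subcategories to tensor ideal localising subcategories. This yields $(\Gamma_\fp \ik)\da_{E'}\in\Loc^{\otimes}\bigl((\Gamma_\fp X)\da_{E'}\bigr)$. Finally I apply $\Gamma_\fq$, which is also exact, preserves coproducts, and is of the form $\Gamma_\fq\ik'\otimes_k-$, hence commutes with the tensor ideal closure. Expanding the two sides by Lemma~\ref{le:kappa-res}(1) together with the identity $\Gamma_\fq\Gamma_{\fq'}=0$ for $\fq\ne\fq'$ (which follows from Theorem~\ref{th:local-global} since $\Gamma_\fq\Gamma_{\fq'}Y$ has empty support), both $(\Gamma_\fp \ik)\da_{E'}$ and $(\Gamma_\fp X)\da_{E'}$ collapse to their $\fq$-summands, giving
\[
\Gamma_\fq \ik' \;\in\; \Loc^{\otimes}\bigl(\Gamma_\fq(X\da_{E'})\bigr).
\]
Since $\ik'$ is a compact generator of $\KInj{kE'}$ one has $\Gamma_\fq\ik'\ne 0$, forcing $\Gamma_\fq(X\da_{E'})\ne 0$, as required.

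The step I expect to need the most care is the $\supseteq$ inclusion: specifically, checking that restriction really preserves $\Loc^{\otimes}$ (compatibility of $(-)\da_{E'}$ with the diagonal tensor product), and justifying the orthogonality $\Gamma_\fq\Gamma_{\fq'}=0$ that allows one to pick out a single prime from the fibre $\mathcal U$. Both points are formal consequences of the machinery in \S\ref{se:strat}, but they are what makes the stratification of $\KInj{kE}$ genuinely useful here rather than the weaker information provided by Lemma~\ref{le:kappa-res} alone.
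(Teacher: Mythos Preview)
Your argument is correct, and the route differs from the paper's in an instructive way. The paper does not separate the two inclusions; instead it goes \emph{up} via induction. Fixing $\fq$ with $\fp=\res_{E,E'}^{*}(\fq)$, one computes $\mcV_E(\Gamma_\fq ik\ua^{E})=\{\fp\}=\mcV_E(\Gamma_\fp ik)$ using Proposition~\ref{pr:V-ind}, and then the stratification of $\KInj{kE}$ gives $\Loc(\Gamma_\fq ik\ua^{E})=\Loc(\Gamma_\fp ik)$. This yields a single chain of equivalences
\[
\Gamma_\fq(X\da_{E'})\ne 0 \iff (\Gamma_\fq ik\otimes_k X\da_{E'})\ua^{E}\ne 0 \iff \Gamma_\fq ik\ua^{E}\otimes_k X\ne 0 \iff \Gamma_\fp ik\otimes_k X\ne 0,
\]
using faithfulness of induction and the projection formula~\eqref{eq:indres}. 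So the paper invokes stratification once to identify two localising subcategories at level $E$, whereas you invoke minimality to place $\Gamma_\fp ik$ inside $\Loc^{\otimes}(\Gamma_\fp X)$ and then push down via restriction. Your approach avoids induction and the projection formula entirely but trades them for the orthogonality $\Gamma_\fq\Gamma_{\fq'}=0$ and the check that restriction preserves $\Loc^{\otimes}$; the paper's approach packages both inclusions into one symmetric chain of ``iff''s. Both are clean, and both make essential use of Theorem~\ref{th:strat-kE}.
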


\begin{proof}
Fix a prime $\fq$ in $\mcV_{E'}$ and set
$\fp=\res_{E,E'}^*(\fq)$. Proposition~\ref{pr:V-ind} yields an
equality $\mcV_E(\Gamma_\fq ik\ua^E)=\{\fp\}=\mcV_E(\Gamma_\fp
ik)$. It thus follows from the classification of localising
subcategories for $kE$ in Theorem~\ref{th:strat-kE} that there is an
equality
\[
\Loc(\Gamma_\fq ik\ua^E)=\Loc(\Gamma_\fp ik).
\]
This implies that $\Gamma_\fq ik\ua^E \otimes_k X\ne 0$ holds if and only if $\Gamma_\fp ik \otimes_k X \ne 0$. The desired result follows from the chain of implications:
\begin{align*}
\Gamma_\fq(X\da_{E'})\ne 0 &\iff \Gamma_\fq ik \otimes_k X\da_{E'} \ne 0  \iff \\ 
 &\Gamma_\fq ik\ua^E \otimes_k X \ne 0 
\iff \Gamma_\fp ik \otimes_k X \ne 0  \iff \Gamma_\fp X \ne 0. 
\end{align*}
The second implication follows from \eqref{eq:indres} and the fact
that $(-)\ua^{E}$ is faithful.
\end{proof}

Next we formulate a version of Chouinard's theorem for $\KInj{kG}$. Recall
that for any ring $A$, a complex $P$ of projective $A$-modules is
\emph{semi-projective} if the functor $\Hom_\sfK(P,-)$, where $\sfK$
is the homotopy category of complexes of $A$-modules, takes
quasi-isomorphisms to isomorphisms.

\begin{proposition}
\label{pr:Chou}
Let $G$ be a finite group and $\fm$ the ideal of positive degree elements of $H^*(G,k)$.
The following statements hold for each  $X$ in $\KInj{kG}$.
\begin{enumerate}[\quad\rm(1)]
\item $\mcV_G(X)\subseteq\Spec H^*(G,k)\setminus \{\fm\}$ if and only if $X$ is acyclic.
\item $\mcV_G(X)\subseteq\{\fm\}$ if and only if $X$ is semi-projective. 
\item $X= 0$ if and only if $X\da_E= 0$ for every elementary abelian subgroup $E\le G$.
\end{enumerate}
\end{proposition}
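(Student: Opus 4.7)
My plan is to exploit the recollement
\[
\StMod(kG) \xrightarrow{\sim} \KacInj{kG} \to \KInj{kG} \to \sfD(\Mod kG)
\]
and the resulting exact triangle $pk \otimes_{k} X \to X \to tk \otimes_{k} X \to$, whose outer terms are semi-projective and acyclic respectively. The first step is to identify the supports of the two tensor idempotents appearing here: I expect $\mcV_G(tk) \subseteq \Proj H^*(G,k)$ and $\mcV_G(pk) = \{\fm\}$. These should be consequences of the extension of the BCR support theory to $\KInj{kG}$ recorded in \cite{Benson/Krause:2008a}: acyclic complexes inherit their support from $\StMod(kG)$, which sees only $\Proj H^*(G,k)$; while $pk$ corresponds to the residue field $k$ in $\sfD(\Mod kG)$, whose annihilator in $H^*(G,k)$ is exactly $\fm$. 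In this identification the triangle above is just the localisation triangle $\Gamma_{\fm}X \to X \to L_{\fm}X \to$.

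Once these identifications are in place, parts (1) and (2) follow from the standard containment $\supp(A \otimes B) \subseteq \supp A \cap \supp B$ together with the local-global principle (Theorem~\ref{th:local-global}). For (1): if $\fm \notin \mcV_G(X)$ then $\mcV_G(pk \otimes X) \subseteq \{\fm\} \cap \mcV_G(X) = \varnothing$, so $pk \otimes X = 0$ by Theorem~\ref{th:local-global} and the recollement triangle yields $X \cong tk \otimes X$, which is acyclic; conversely, any acyclic $X$ lies in the tensor ideal generated by $tk$, so $\mcV_G(X) \subseteq \mcV_G(tk) \subseteq \Proj H^*(G,k)$. Part (2) is symmetric: $\mcV_G(X) \subseteq \{\fm\}$ forces $\mcV_G(tk \otimes X) = \varnothing$, hence $tk \otimes X = 0$, whence $X$ is isomorphic to the semi-projective complex $pk \otimes X$.

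For (3), the forward direction is immediate. For the converse, I will suppose $X\da_E = 0$ in $\KInj{kE}$ for every elementary abelian subgroup $E \le G$; at least one such $E$ exists since $p$ divides $|G|$. The assumption that $X\da_E$ is null-homotopic as a $kE$-complex makes it in particular acyclic, and since restriction does not alter the underlying complex of $k$-vector spaces one obtains $H^*(X) = H^*(X\da_E) = 0$. Hence $X$ is acyclic, so lies in $\KacInj{kG}$ and, under the equivalence with $\StMod(kG)$, corresponds to a $kG$-module $M$. The hypothesis then translates to: $M\da_E$ is projective over $kE$ for every elementary abelian $E \le G$. Classical Chouinard's theorem, valid for arbitrary (possibly infinite-dimensional) modules, then forces $M$ itself to be projective over $kG$; equivalently $X = 0$ in $\StMod(kG)$, and hence also in $\KInj{kG}$.

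The main obstacle I anticipate lies in parts (1) and (2), namely the support computations $\mcV_G(pk) = \{\fm\}$ and $\mcV_G(tk) \subseteq \Proj H^*(G,k)$. Although both facts are morally transparent from the recollement, verifying them rigorously requires a careful unpacking of the support theory for $\KInj{kG}$ from \cite{Benson/Krause:2008a}; everything else is then essentially formal.
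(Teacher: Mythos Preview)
Your proposal is correct and follows essentially the same approach as the paper: both identify the triangle $pk\otimes_k X\to X\to tk\otimes_k X\to$ as the localisation triangle for the closed set $\{\fm\}$, deduce (1) and (2) from this identification, and invoke Chouinard's theorem for (3). The paper packages the identification slightly differently---it states directly that $pk\otimes_k X\to X$ is a semi-projective resolution (proving semi-projectivity via the adjunction $\Hom_\sfK(pk\otimes_k X,-)\cong\Hom_\sfK(pk,\Hom_k(X,-))$) and reads off (1) and (2) without explicitly invoking the local-global principle---but this is the same content as your support computation $\mcV_G(pk)=\{\fm\}$, just phrased differently.

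For (3) your organisation is actually a bit cleaner than the paper's. The paper splits into the cases $pk\otimes_k X\ne 0$ and $tk\otimes_k X\ne 0$, handling the first by observing that restriction preserves semi-projectivity (hence non-acyclicity) and the second via Chouinard. You instead note at once that the hypothesis forces $H^*(X)=0$, so $X$ is acyclic, and then Chouinard finishes the job in one stroke; this avoids the case split entirely. Both arguments rely on the same two ingredients (compatibility of restriction with the equivalence $\KacInj{}\simeq\StMod$, and Chouinard for arbitrary modules), so the difference is purely expository.
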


\begin{proof}
We write $pk$ for a projective resolution and $tk$ for a Tate resolution of the $kG$-module $k$. These fit into an exact triangle $pk\to ik\to tk\to$ which is the localisation triangle \eqref{eq:localisation} in $\KInj{kG}$ for $ik$ with respect to the closed subset $\{\fm\}$. In particular $\Gamma_\fm ik=pk$, so that
$\Gamma_\fm X\cong pk\otimes_k X$.

\begin{Claim}
The induced map $pk\otimes_{k}X\to X$ is a semi-projective resolution of $X$.
\end{Claim}
Indeed, since $pk$ is a complex of projectives, so is $pk\otimes_{k}X$; semi-projectivity follows from the isomorphism $\Hom_{\sfK}(pk\otimes_{k}X,-)\cong \Hom_{\sfK}(pk,\Hom_{k}(X,-))$.

Statements (1) and (2) are immediate from the preceding claim.

(3) Suppose $X$ is non-zero in $\KInj{kG}$.  In view of the localisation triangle $pk\otimes_k X\to X\to tk\otimes_k X\to$ we may assume that $pk\otimes_k X\neq 0$ or $tk\otimes_k X\neq 0$. 

Assume that $pk\otimes_k X$ is non-zero; equivalently, that it is not acyclic, by the preceding claim. Observe that for each subgroup $H\le G$ the restriction functor $(-)\da_H$ sends semi-projectives to semi-projectives. Therefore $(pk\otimes_k X)\da_H$, and hence $X\da_{H}$, is non-zero.  

Now assume that $tk\otimes_k X$ is non-zero. Chouinard's theorem \cite{Chouinard:1976a} applies, because one can identify the subcategory of acyclic complexes in $\KInj{kG}$ with $\StMod(kG)$, and this identification is compatible with restrictions. This yields an elementary abelian subgroup $E\leq G$ with $(tk\otimes_k X)\da_E$, and hence also $X\da_E$, non-zero.
\end{proof}

The following result is a culmination of the development in
\S\S4--9. Its applications are deferred to ensuing sections.

\begin{theorem}
\label{th:KInj-min}
Let $G$ be a finite group. The triangulated category $\KInj{kG}$ is stratified by the canonical action of the cohomology algebra $H^{*}(G,k)$.
\end{theorem}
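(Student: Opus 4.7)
The goal is to show that for each $\fp \in \mcV_G$, the tensor ideal localising subcategory $\Gamma_\fp \KInj{kG}$ is either zero or minimal. The strategy is to transfer minimality from an elementary abelian subgroup $E \le G$, where it is supplied by Theorem~\ref{th:strat-kE}, using Quillen's stratification theorem (Theorem~\ref{th:quillen}) to pin down the correct subgroup and prime.

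Fix $\fp$ with $\Gamma_\fp \KInj{kG} \ne 0$, and invoke Theorem~\ref{th:quillen} to select an elementary abelian $E \le G$ and $\fq \in \mcV_E$ with $\res_{G,E}^*(\fq) = \fp$ and $\fp$ originating in $E$. By Lemma~\ref{le:loc-minimal}(c), minimality of $\Gamma_\fp \KInj{kG}$ is equivalent to the existence, for any nonzero $X, Y$ in it, of some $Z \in \KInj{kG}$ making $\Hom_{\KInj{kG}}^*(X \otimes Z, Y) \ne 0$. I would look for such $Z$ of the form $W\ua^G$ with $W \in \KInj{kE}$. The Frobenius isomorphism \eqref{eq:indres} and the induction-restriction adjunction give
\[
\Hom_{\KInj{kG}}^*(X \otimes W\ua^G, Y) \cong \Hom_{\KInj{kE}}^*(X\da_E \otimes W, Y\da_E).
\]

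Lemma~\ref{le:kappa-res}(1) decomposes $X\da_E \cong \bigoplus_{\fq' \in \mathcal U} \Gamma_{\fq'}(X\da_E)$ and similarly for $Y\da_E$, where $\mathcal U = (\res_{G,E}^*)^{-1}(\fp)$. For distinct $\fq', \fq'' \in \mathcal U$ the supports of $\Gamma_{\fq'}\one$ and $\Gamma_{\fq''}\one$ are disjoint, so their tensor product vanishes; choosing $W \in \Gamma_\fq \KInj{kE}$ therefore kills all summands of $X\da_E$ except $\Gamma_\fq(X\da_E)$ upon tensoring with $W$, and projecting onto the $\Gamma_\fq$-summand of $Y\da_E$ exhibits $\Hom_{\KInj{kE}}^*(\Gamma_\fq(X\da_E) \otimes W, \Gamma_\fq(Y\da_E))$ as a direct summand of the displayed Hom. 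Granted that both $\Gamma_\fq(X\da_E)$ and $\Gamma_\fq(Y\da_E)$ are nonzero, Theorem~\ref{th:strat-kE} asserts that $\Gamma_\fq \KInj{kE}$ is minimal, so Lemma~\ref{le:loc-minimal} furnishes $W \in \Gamma_\fq \KInj{kE}$ realising the required non-vanishing.

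The main obstacle is therefore to prove $\Gamma_\fq(X\da_E) \ne 0$ for every nonzero $X \in \Gamma_\fp \KInj{kG}$, with $(E, \fq)$ the distinguished pair produced by Quillen. Arguing by contradiction, Proposition~\ref{pr:Chou}(3) provides some elementary abelian $E' \le G$ with $X\da_{E'} \ne 0$, and Lemma~\ref{le:kappa-res}(1) applied to $E'$ yields $\fq' \in \mcV_{E'}$ with $\res_{G,E'}^*(\fq') = \fp$ and $\Gamma_{\fq'}(X\da_{E'}) \ne 0$. Applying Quillen's theorem internally to $E'$ locates the subgroup $F \le E'$ in which $\fq'$ (and hence $\fp$) originates; Quillen's uniqueness forces $F$ to be a $G$-conjugate of $E$, so after replacing $E$ by that conjugate one may assume $E \le E'$. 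The subgroup theorem for elementary abelian groups (Theorem~\ref{th:subgp-elem}), applied to $E \le E'$ and the $kE'$-object $X\da_{E'}$, then gives $\mcV_E(X\da_E) = (\res_{E',E}^*)^{-1} \mcV_{E'}(X\da_{E'})$. Since $\fq'$ originates in $E$, it lies in the image of $\res_{E',E}^*$; pulling a suitable preimage back yields $\fq \in \mcV_E(X\da_E)$, i.e.\ $\Gamma_\fq(X\da_E) \ne 0$, the desired contradiction.
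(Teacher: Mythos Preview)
Your proof is correct and follows essentially the same strategy as the paper: reduce minimality at $\fp$ to minimality at an originating prime $\fq$ over an elementary abelian $E$ via Chouinard (Proposition~\ref{pr:Chou}), the decomposition of Lemma~\ref{le:kappa-res}, Quillen's theorem, and the elementary abelian subgroup theorem (Theorem~\ref{th:subgp-elem}), then invoke Theorem~\ref{th:strat-kE} and Lemma~\ref{le:loc-minimal}.

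Two minor points where the paper is tighter. First, the paper takes $Z$ to be the injective resolution of the permutation module $k(G/E)$, so that $X\otimes_k Z\cong X\da_E\ua^G$ and Frobenius reciprocity gives $\Hom^*_{\sfK(kE)}(X\da_E,Y\da_E)$ directly; since $ik$ is both unit and compact generator in $\KInj{kE}$, Lemma~\ref{le:loc-minimal}(c) yields $\Hom^*(\Gamma_\fq X\da_E,\Gamma_\fq Y\da_E)\ne 0$ without having to manufacture a $W\in\Gamma_\fq\KInj{kE}$. Your route via $Z=W\ua^G$ works, but the extra step of confining $W$ to $\Gamma_\fq\KInj{kE}$ is unnecessary once you realise the $\Gamma_\fq$-summands of $X\da_E$ and $Y\da_E$ already contribute a nonzero direct summand to the Hom. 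Second, your ``arguing by contradiction'' framing is spurious: the paragraph is a direct argument, and ``the desired contradiction'' at the end should simply read ``as desired''. The paper also handles the conjugacy bookkeeping slightly more cleanly by first producing, for each $X$, some originating pair $(E_X,\fq_X)$ with $\Gamma_{\fq_X}(X\da_{E_X})\ne 0$ and then invoking the $G$-conjugacy of all originating pairs once, rather than replacing the fixed $(E,\fq)$ mid-argument.
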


\begin{proof}
For any subgroup $H\le G$ we abbreviate $\KInj{kH}$ to $\sfK(kH)$. We
have to prove that for each $\fp\in\mcV_G$, the subcategory
$\Gamma_\fp\sfK(kG)$ is minimal among tensor ideal localising
subcategories of $\sfK(kG)$.

Let $X$ be a non-zero object in $\Gamma_\fp\sfK(kG)$. Proposition~\ref{pr:Chou} provides an elementary abelian subgroup $E_0$ of $G$ such that $X\da_{E_0}$ is non-zero. Choose a prime $\fq_0$ in $\mcV_{E_0}(X\da_{E_0})$. Using Proposition~\ref{pr:V-ind} one thus obtains
\[
\res_{G,E_0}^*(\fq_0)\in \mcV_G(X\da_{E_0}\ua^G)\subseteq \mcV_G(X) =\{\fp\}.
\]
Hence $\res^{*}_{G,E_0}(\fq_0)=\fp$, so that $E_0 \ge E$ and
$\fq_0=\res_{E_0,E}^*(\fq)$ for some pair $(E,\fq)$ corresponding to
$\fp$ as in Theorem~\ref{th:quillen}. Thus $\fq\in\mcV_E(X\da_E)$, by
Theorem~\ref{th:subgp-elem}.

By Theorem~\ref{th:quillen} all pairs $(E,\fq)$ where $\fp$ originates
in $E$ are conjugate, hence if we choose one, then each non-zero
$X\in\Gamma_\fp\sfK(kG)$ has $\Gamma_\fq X\da_E\ne 0$.

Now let $Y$ be another non-zero object in $\Gamma_\fp\sfK(kG)$ and set $Z$ to be the injective resolution of $k(G/E)$, the permutation module. As noted in the proof of Proposition~\ref{pr:V-ind}, the complexes $X\da_{E}\ua^{G}$  and $X\otimes_{k}Z$ are isomorphic in $\KInj{kG}$. This gives the first isomorphism below:
\[
\Hom^{*}_{\sfK(kG)}(X\otimes_k Z,Y) 
	\cong \Hom^{*}_{\sfK(kG)}(X\da_{E}\ua^{G},Y)
	\cong \Hom^{*}_{\sfK(kE)}(X\da_E,Y\da_E).
\]
The second isomorphism is by Frobenius reciprocity. Lemma~\ref{le:kappa-res} implies that $\Gamma_\fq X\da_E$ and $\Gamma_\fq Y\da_E$ are non-zero direct summands of $X\da_E$ and $Y\da_E$, respectively. It follows from Theorem~\ref{th:strat-kE} and the isomorphism above that $\Hom^{*}_{\sfK(kG)}(X\otimes_k Z,Y)$ is non-zero, so $\Gamma_{\fp}\sfK(kG)$ is minimal, by Lemma~\ref{le:loc-minimal}.
\end{proof}

\section{The main theorems}
\label{se:main-theorem}

Let $k$ be a field of characteristic $p$ and $G$ be a finite group,
where $p$ divides the order of $G$. The following result implies
Theorem~\ref{th:loc}.

\begin{theorem}
\label{th:classify-KInj}
The tensor triangulated category $\KInj{kG}$ is stratified by the canonical action of $H^*(G,k)$.
The maps
\[ 
\left\{\begin{gathered}\text{tensor ideal localising}\\
\text{subcategories of $\KInj{kG}$}\end{gathered}\right\}
\xymatrix{\ar@<1ex>[r]^\sigma & \ar@<1ex>[l]^\tau}
\{\text{subsets of $\Spec H^*(G,k)$}\} 
\]
described in \eqref{eq:classification} are mutually inverse bijections.
\end{theorem}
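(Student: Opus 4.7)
The first assertion—that $\KInj{kG}$ is stratified by the canonical action of $H^{*}(G,k)$—is exactly Theorem~\ref{th:KInj-min}. The plan is then to deduce the bijection by invoking Theorem~\ref{th:transfer-strat}, which for a stratified tensor triangulated category produces mutually inverse bijections between tensor ideal localising subcategories and subsets of $\supp_{H^{*}(G,k)}\KInj{kG}$. The only remaining task is to verify that this support set is the whole of $\Spec H^{*}(G,k)$. Since $\Gamma_{\fp}X\cong \Gamma_{\fp}\ik\otimes_{k} X$, the support set in question equals $\mcV_{G}(\ik)$, so it suffices to prove $\mcV_{G}(\ik)=\Spec H^{*}(G,k)$.

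Given $\fp\in\Spec H^{*}(G,k)$, I would apply Quillen's stratification (Theorem~\ref{th:quillen}) to produce an elementary abelian subgroup $E\le G$ and a prime $\fq\in\mcV_{E}$ with $\res^{*}_{G,E}(\fq)=\fp$. Assuming $\Gamma_{\fq}\ik\ne 0$ holds in $\KInj{kE}$, Proposition~\ref{pr:V-ind} yields
\[
\mcV_{G}\bigl((\Gamma_{\fq}\ik)\ua^{G}\bigr)=\res^{*}_{G,E}\bigl(\mcV_{E}(\Gamma_{\fq}\ik)\bigr)=\{\fp\},
\]
and faithfulness of $(-)\ua^{G}$ ensures the induced object is non-zero. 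Thus $\fp\in\mcV_{G}(\ik)$, which completes the reduction.

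It remains to verify the non-vanishing of $\Gamma_{\fq}\ik$ in $\KInj{kE}$ for each $\fq\in\Spec H^{*}(E,k)$. Viewed as a \dg algebra with zero differential, $kE$ satisfies Hypothesis~\ref{hyp:A}, so by Theorem~\ref{th:kinj-generator} the complex $\ik$ is a compact generator of $\KInj{kE}$ with $\End^{*}(\ik)=H^{*}(E,k)$. Since the module $\Hom^{*}(\ik,\ik)$ is finitely generated over $H^{*}(E,k)$ with annihilator zero, the criterion of \cite[Theorem~5.5(1)]{Benson/Iyengar/Krause:2008a} delivers $\mcV_{E}(\ik)=\mcV(0)=\Spec H^{*}(E,k)$, as required. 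The main substance of the theorem is the stratification already established in Theorem~\ref{th:KInj-min}; there is no genuine obstacle beyond this, since the passage to the bijective classification reduces, by formal arguments and Quillen's theorem, to the elementary abelian base case just handled.
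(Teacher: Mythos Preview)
Your proof is correct and follows the paper's approach exactly: stratification is Theorem~\ref{th:KInj-min}, the bijection then follows from Theorem~\ref{th:transfer-strat}, and one must check that $\supp_{H^*(G,k)}\KInj{kG}=\Spec H^*(G,k)$. Your detour through Quillen stratification and elementary abelian subgroups for this last point is correct but unnecessary---the argument you give for $E$ works directly for $G$ once you replace $\ik$ (which need not generate) by a compact generator $C$, say the injective resolution of the sum of the simple $kG$-modules: then $\Hom^*_{\sfK(kG)}(C,\ik)$ is finitely generated over $H^*(G,k)$ and contains $H^*(G,k)=\Ext^*_{kG}(k,k)$ as a direct summand (since $k$ is one of the simples), so its annihilator is zero and \cite[Theorem~5.5(1)]{Benson/Iyengar/Krause:2008a} gives $\mcV_G(\ik)=\mcV(0)=\Spec H^*(G,k)$ immediately.
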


\begin{proof}
This follows from Theorems~\ref{th:transfer-strat} and
\ref{th:KInj-min}.  Note that the support of $\KInj{kG}$ equals $\Spec
H^*(G,k)$ since $\mcV_G(k)=\Spec H^*(G,k)$.
\end{proof}

Next we prove our main result about the stable module category $\StMod(kG)$. We identify it with $\KacInj{kG}$, the full subcategory of $\KInj{kG}$ consisting of acyclic complexes. Observe that $\KacInj{kG}$ is the tensor ideal localising subcategory of $\KInj{kG}$ corresponding to the subset $\Proj H^*(G,k)$ of non-maximal primes; see Proposition \ref{pr:Chou}.  This provides one way of classifying all tensor ideal localising subcategories of $\StMod(kG)$ in terms of subsets of $\Proj H^*(G,k)$; see Proposition~\ref{pr:StMod-to-KInj}.

Now we define an action of the cohomology algebra $H^*(G,k)$ on
$\StMod(kG)$. The equivalence $\KacInj{kG}\xrightarrow\sim\StMod(kG)$
sends a complex $X$ to its cycles $Z^0X$ in degree zero. Restriction
to the category of acyclic complexes induces therefore the following
$H^*(G,k)$-action
\begin{equation*}
\label{eq:ZStmod1}
H^*(G,k)\to Z^*\KInj{kG}\xrightarrow{\mathrm{res}}Z^*\KacInj{kG}\xrightarrow{\sim}Z^*\StMod(kG),
\end{equation*}
where the first map is given by the canonical action of $H^*(G,k)$ on
$\KInj{kG}$. For each $kG$-module $M$, this action induces the map
\begin{equation}
\label{eq:ZStmod} 
H^*(G,k)\hookrightarrow \widehat\Ext^*_{kG}(k,k)=
\End^*_{\StMod(kG)}(k) \xrightarrow{M \otimes_k -}
\End^*_{\StMod(kG)}(M).
\end{equation}

Thus we are in the setting of \S\ref{se:strat}. The support of any module $M$ in $\StMod(kG)$ is denoted $\mcV_G(M)$; it is a subset of $\Spec H^*(G,k)$.  By construction, this coincides with the support of a Tate resolution of $M$ in $\KInj{kG}$. Moreover, $\mcV_{G}(M)$ is the support defined in \cite{Benson/Carlson/Rickard:1996a}. Indeed if $\fp$ is a non-maximal prime $\fp$ in $H^*(G,k)$, then $\Gamma_\fp k$ equals the kappa module for $\fp$ defined in \cite{Benson/Carlson/Rickard:1996a}; see \cite[\S10]{Benson/Iyengar/Krause:2008a}.

\begin{theorem}
\label{th:StMod}
The tensor triangulated category $\StMod(kG)$ is stratified by the action of
$H^*(G,k)$ given by \eqref{eq:ZStmod}. The maps
\[ 
\left\{\begin{gathered}\text{tensor ideal localising}\\
\text{subcategories of $\StMod(kG)$}\end{gathered}\right\}
\xymatrix{\ar@<1ex>[r]^\sigma & \ar@<1ex>[l]^\tau}
\{\text{subsets of $\Proj H^*(G,k)$}\} 
\]
described in \eqref{eq:classification} are mutually inverse bijections.
\end{theorem}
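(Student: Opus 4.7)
The plan is to deduce the theorem directly from Theorem~\ref{th:classify-KInj} through the tensor equivalence $\StMod(kG)\xra{\sim}\KacInj{kG}$, viewing the target as a tensor ideal localising subcategory of $\KInj{kG}$. By Proposition~\ref{pr:Chou}(1), the acyclic objects of $\KInj{kG}$ are precisely those with support contained in $\Proj H^*(G,k)$, so under the bijection of Theorem~\ref{th:classify-KInj}, the subcategory $\KacInj{kG}$ corresponds to the subset $\Proj H^*(G,k)\subseteq\Spec H^*(G,k)$. In particular, for each $\fp\in\Proj H^*(G,k)$ the subcategory $\Gamma_\fp\KInj{kG}$ already lies in $\KacInj{kG}$, since its support is $\{\fp\}$.

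Next, I would observe that the action \eqref{eq:ZStmod} is, by its very construction in the paragraph preceding the theorem, the canonical $H^*(G,k)$-action on $\KInj{kG}$ restricted to $\KacInj{kG}$ and then transported along the equivalence $\KacInj{kG}\simeq\StMod(kG)$. Therefore, under this equivalence, $\Gamma_\fp\StMod(kG)$ corresponds to $\Gamma_\fp\KInj{kG}$. By Theorem~\ref{th:KInj-min} the latter is minimal as a tensor ideal localising subcategory of $\KInj{kG}$; since $\KacInj{kG}\subseteq\KInj{kG}$ is itself a tensor ideal localising subcategory, minimality inside $\KInj{kG}$ forces minimality inside $\KacInj{kG}$ (any tensor ideal localising subcategory of $\KacInj{kG}$ is automatically one of $\KInj{kG}$, using that the latter is generated by $\ik$ and $X\otimes_k\ik\cong X$), and hence minimality of $\Gamma_\fp\StMod(kG)$ inside $\StMod(kG)$. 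This establishes stratification of $\StMod(kG)$ by $H^*(G,k)$.

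To finish via Theorem~\ref{th:transfer-strat}, I would identify $\supp_{H^*(G,k)}\StMod(kG)$ with $\Proj H^*(G,k)$. The prime $\fm$ is excluded because $\Gamma_\fm\ik\cong pk$ in $\KInj{kG}$ and $pk\otimes_k X\cong 0$ for every $X\in\KacInj{kG}$; this follows from $tk\otimes_k tk\cong tk$ together with the localisation triangle $pk\to\ik\to tk\to$ by tensoring with any $X$ in $\KacInj{kG}$. The remaining primes $\fp\in\Proj H^*(G,k)$ all appear because $\Gamma_\fp\KInj{kG}$ is non-zero by Theorem~\ref{th:KInj-min} and lies in $\KacInj{kG}$ by the first paragraph, so it corresponds under the equivalence to a non-zero object of $\Gamma_\fp\StMod(kG)$.

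The main delicate point is the action-matching step in the second paragraph: one must be sure that \eqref{eq:ZStmod} is literally the same action one gets by transporting the canonical $H^*(G,k)$-action on $\KInj{kG}$, with no twist arising from the shift of unit from $\ik$ to $k=Z^0(tk)$. Because the paper defines \eqref{eq:ZStmod} exactly as this restricted and transported action, the compatibility is built in, and the argument then reduces to pure bookkeeping on top of Theorems~\ref{th:KInj-min} and~\ref{th:transfer-strat}.
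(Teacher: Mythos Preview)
Your approach is essentially the paper's own: reduce to Theorem~\ref{th:classify-KInj} via the identification $\StMod(kG)\simeq\KacInj{kG}$. The paper's proof is a single sentence invoking Theorem~\ref{th:classify-KInj} and Proposition~\ref{pr:StMod-to-KInj}; you have unpacked the bookkeeping behind that sentence, and most of your unpacking is correct.

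There is, however, one genuine misstep. In the second paragraph you justify the passage ``tensor ideal localising in $\KacInj{kG}$ $\Rightarrow$ tensor ideal localising in $\KInj{kG}$'' by asserting that $\KInj{kG}$ is generated by $\ik$. That is false in general: $\ik$ generates $\KInj{kG}$ only when $G$ is a $p$-group (see the remark following the definition of tensor triangulated categories in \S\ref{se:strat}). For arbitrary $G$ the unit need not be a generator, and your argument that $X\otimes_k Y\in\Loc(X\otimes_k\ik)=\Loc(X)$ for all $Y\in\KInj{kG}$ collapses. The statement you need is nonetheless true and is exactly the content of Proposition~\ref{pr:StMod-to-KInj}: the image in $\KInj{kG}$ of any tensor ideal localising subcategory of $\StMod(kG)$ is again tensor ideal localising. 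The correct justification, implicit there, is that ``tensor ideal'' means closed under tensoring with simple $kG$-modules, and this condition reads identically whether imposed inside $\KacInj{kG}$ or inside $\KInj{kG}$ (tensoring an acyclic complex with a simple module keeps it acyclic). Replace your parenthetical with a citation of Proposition~\ref{pr:StMod-to-KInj} and the argument goes through.
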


\begin{proof}
Using the identification of $\StMod(kG)$ with $\KacInj{kG}$, the assertion follows from Theorem~\ref{th:classify-KInj} and Proposition~\ref{pr:StMod-to-KInj}.
\end{proof}

The result below implies Theorem~\ref{th:main} from the introduction.

\begin{theorem}
\label{th:Mod}
The maps
\[ 
\left\{\begin{gathered}\text{non-zero tensor ideal localising}\\
\text{subcategories of $\Mod{kG}$}\end{gathered}\right\}
\xymatrix{\ar@<1ex>[r]^\sigma & \ar@<1ex>[l]^\tau}
\{\text{subsets of $\Proj H^*(G,k)$}\} 
\]
given by the obvious analogue of \eqref{eq:classification} are mutually inverse bijections.
\end{theorem}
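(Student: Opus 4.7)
The plan is to deduce Theorem~\ref{th:Mod} by composing the bijections furnished by Proposition~\ref{pr:Mod-to-StMod} and Theorem~\ref{th:StMod}. Write $q\col \Mod(kG)\to \StMod(kG)$ for the canonical functor.

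The first step is to check that the $\mcV_G$-support of a $kG$-module $M$, as defined in the introduction, coincides with the support of its image $q(M)$ in $\StMod(kG)$, both viewed as subsets of $\Proj H^*(G,k)$. This is essentially built into the definition: $\mcV_G(M)$ is extracted from the Tate resolution $tM=M\otimes_k tk$, and $tM$ represents $q(M)$ under the equivalence $\StMod(kG)\xrightarrow{\sim}\KacInj{kG}$. Given this identification, the maps in the statement of Theorem~\ref{th:Mod} are related to those for $\StMod(kG)$ by
\[
\sigma_{\Mod}(\sfC)=\sigma_{\StMod}(q(\sfC))
\qquad\text{and}\qquad
\tau_{\Mod}(\mcV)=q^{-1}(\tau_{\StMod}(\mcV)).
\]
In particular $\tau_{\Mod}(\mcV)$ is always non-zero, since every projective $kG$-module has empty support and is therefore contained in it.

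The second step is to chain the two correspondences. Proposition~\ref{pr:Mod-to-StMod} gives a bijection $\sfC\mapsto q(\sfC)$ from non-zero tensor ideal localising subcategories of $\Mod(kG)$ to tensor ideal localising subcategories of $\StMod(kG)$, with inverse $\sfD\mapsto q^{-1}(\sfD)$. Theorem~\ref{th:StMod} identifies the target with subsets of $\Proj H^*(G,k)$ via mutually inverse maps $\sigma_{\StMod}$ and $\tau_{\StMod}$. Composing these bijections and using the compatibilities from the first step shows that $\sigma_{\Mod}$ and $\tau_{\Mod}$ are mutually inverse bijections, which is exactly the assertion of Theorem~\ref{th:Mod}.

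There is no real obstacle here; all the work has been done by the time one arrives at Theorem~\ref{th:StMod}, which itself rests on the entire chain $\sfD(S)\leadsto\KInj\Lambda\leadsto\KInj A\leadsto\KInj{kE}\leadsto\KInj{kG}\leadsto\StMod(kG)$ and on the stratification Theorem~\ref{th:KInj-min}. The only points that require attention are the bookkeeping identification of $\mcV_G(M)$ with the support of $q(M)$, and the observation that the preimage under $q$ of any tensor ideal localising subcategory of $\StMod(kG)$ is automatically non-zero, both of which are routine.
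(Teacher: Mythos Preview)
Your proposal is correct and follows exactly the same route as the paper: the paper's proof is a single sentence citing Theorem~\ref{th:StMod} and Proposition~\ref{pr:Mod-to-StMod}, and you have simply spelled out the bookkeeping that makes the composition work.
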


\begin{proof}
This follows from Theorem~\ref{th:StMod} and Proposition~\ref{pr:Mod-to-StMod}.
\end{proof}

In \cite{Benson/Krause:2008a}, it was proved that if $G$ is a finite $p$-group then there is an equivalence of triangulated categories
\[ 
\KInj{kG} \xrightarrow\sim \sfD(C^*(BG;k))
\]
where $C^*(BG;k)$ denotes the \dg algebra of cochains on the classifying space $BG$ of $G$. This gives $\sfD(C^*(BG;k))$ the structure of a tensor triangulated category. Composing with the canonical map one gets an action
\begin{equation}
\label{eq:ZDdgCBGk}
 H^*(G,k) \to Z^*\KInj{kG} \xrightarrow\sim Z^*\sfD(C^*(BG;k)).
\end{equation}

\begin{theorem}
If $G$ is a finite $p$-group then the tensor triangulated category
$\sfD(C^*(BG;k))$ is stratified by the action of $H^*(G,k)$ given by
\eqref{eq:ZDdgCBGk}.  The maps
\[ 
\left\{\text{localising subcategories of $\sfD(C^*(BG;k))$}\right\}
\xymatrix{\ar@<1ex>[r]^\sigma & \ar@<1ex>[l]^\tau}
\{\text{subsets of $\Spec H^*(G,k)$}\} 
\]
described in \eqref{eq:classification} are mutually inverse bijections.
\end{theorem}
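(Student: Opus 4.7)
The strategy is to transport the stratification from $\KInj{kG}$, which is given by Theorem~\ref{th:classify-KInj}, across the equivalence $\KInj{kG}\xrightarrow{\sim}\sfD(C^*(BG;k))$ of \cite{Benson/Krause:2008a}. Since the action of $H^*(G,k)$ on $\sfD(C^*(BG;k))$ is \emph{defined} in \eqref{eq:ZDdgCBGk} to be the composition of the canonical action on $\KInj{kG}$ with this equivalence, the equivalence is compatible with the $H^*(G,k)$-actions on both sides, and in particular identifies the localisation and torsion functors $\Gamma_\mcV$, $L_\mcV$ and $\Gamma_\fp$ on the two categories (this is the content of Corollary~\ref{cor:2actions}, or simply of the naturality of these constructions under equivalences of $R$-linear categories). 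Consequently it identifies the subcategories $\Gamma_\fp\KInj{kG}$ and $\Gamma_\fp\sfD(C^*(BG;k))$ for every $\fp$, and the minimality of the former (Theorem~\ref{th:KInj-min}) transfers to the minimality of the latter among tensor ideal localising subcategories.

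Next I would dispense with the qualifier ``tensor ideal'' in the statement. For a finite $p$-group the unit $ik$ of $\KInj{kG}$ is a compact generator, as recorded in the paragraph following Proposition~\ref{pr:dgh-tt}; hence its image under the equivalence, which is the unit of $\sfD(C^*(BG;k))$, is also a compact generator. Therefore every localising subcategory of $\sfD(C^*(BG;k))$ is automatically tensor ideal, so the maps $\sigma$ and $\tau$ in \eqref{eq:classification} may be taken to range over all localising subcategories as the statement claims.

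The bijection then follows from Theorem~\ref{th:transfer-strat}, which converts the stratification into the classification in terms of subsets of $\supp_{H^*(G,k)}\sfD(C^*(BG;k))$. It remains to identify this support set with $\Spec H^*(G,k)$: since the equivalence preserves support and $\mcV_G(k)=\Spec H^*(G,k)$ (as used in the proof of Theorem~\ref{th:classify-KInj}), the support of the unit, and hence of the whole category, equals $\Spec H^*(G,k)$.

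The only mildly delicate point is verifying that the equivalence from \cite{Benson/Krause:2008a} really is one of tensor triangulated categories, so that the transport of the structure from $\KInj{kG}$ is justified; but this is exactly what is asserted in loc.\ cit.\ and recalled here, so the argument is essentially a matter of chasing definitions once Theorem~\ref{th:classify-KInj} is in hand.
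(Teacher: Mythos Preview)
Your proposal is correct and follows essentially the same approach as the paper's own proof, which is the two-line observation that the result ``follows from Theorem~\ref{th:classify-KInj} and the observation that every localising subcategory is tensor ideal.'' Your write-up simply unpacks both halves of that sentence: the transport across the equivalence (which is automatic here because the tensor structure and $H^*(G,k)$-action on $\sfD(C^*(BG;k))$ are \emph{defined} via the equivalence, so there is in fact nothing delicate to verify), and the fact that for a $p$-group the unit generates, forcing every localising subcategory to be tensor ideal.
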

\begin{proof}
This follows from Theorem~\ref{th:classify-KInj} and the observation that every
localising subcategory is tensor ideal.
\end{proof}

\section{Applications}
\label{se:applics}

In this section we deduce the principal theorems of Benson, Carlson,
and Rickard
\cite{Benson/Carlson/Rickard:1996a,Benson/Carlson/Rickard:1997a} from
Theorem \ref{th:classify-KInj} without using shifted subgroups, any
form of Dade's lemma, or algebraic closure of the field.  Then we make
various other deductions from the main theorem. We classify localising
subcategories closed under products, and show that these are the same
as those closed under Brown-Comenetz duality.  We classify the
smashing subcategories, and show that the telescope conjecture holds
for $\StMod(kG)$ and $\KInj{kG}$. Finally, we find the left
perpendicular categories to localising subcategories. Note that
similar applications can be formulated for \dg modules over graded
polynomial algebras and graded exterior algebras; this is left to the
interested reader; see \cite{Benson/Iyengar/Krause:bik2}.

Throughout this section, we abbreviate $\Hom_{\KInj{kG}}(-,-)$ to $\Hom_{\sfK(kG)}(-,-)$.

\subsection*{The tensor product theorem}\ \medskip

\noindent
Part (2) of the result below was proved by Benson, Carlson, and
Rickard \cite{Benson/Carlson/Rickard:1996a} under the additional
hypothesis that $k$ is algebraically closed.

\begin{theorem}
Let $G$ be a finite group and $k$ be a field of characteristic $p$.
\begin{enumerate}[\quad\rm(1)]
\item If $X,Y$ are objects in $\KInj{kG}$, then $\mcV_G(X \otimes_k Y)=\mcV_G(X)\cap\mcV_G(Y)$.
\item If $M,N$ are objects in $\StMod(kG)$, then $\mcV_G(M\otimes_k N)=\mcV_G(M)\cap\mcV_G(N)$.
\end{enumerate}
\end{theorem}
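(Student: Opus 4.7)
My plan is to deduce part (2) from part (1) and then prove part (1) as a direct application of the stratification theorem \ref{th:KInj-min}. The Tate resolution $M\mapsto tM=M\otimes_k tk$ gives a tensor equivalence $\StMod(kG)\xrightarrow\sim\KacInj{kG}\hookrightarrow\KInj{kG}$. Since the $H^*(G,k)$-action on $\StMod(kG)$ is the restriction of the canonical action on $\KInj{kG}$, the two notions of support agree: $\mcV_G(M)=\mcV_G(tM)$. Using the identity $tk\otimes_k tk\cong tk$ recorded in \S\ref{se:strategy} I would produce a natural isomorphism $t(M\otimes_k N)\cong tM\otimes_k tN$ in $\KInj{kG}$, from which (2) is immediate once (1) is known.

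For the inclusion $\mcV_G(X\otimes_k Y)\subseteq\mcV_G(X)\cap\mcV_G(Y)$ in part (1), I would invoke the standard isomorphisms
\[
\Gamma_\fp(X\otimes_k Y)\cong\Gamma_\fp\one\otimes_k X\otimes_k Y\cong\Gamma_\fp X\otimes_k Y\cong X\otimes_k\Gamma_\fp Y,
\]
valid because $R$ acts canonically on $\KInj{kG}$; either $\Gamma_\fp X=0$ or $\Gamma_\fp Y=0$ then forces $\Gamma_\fp(X\otimes_k Y)=0$.

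The content lies in the reverse inclusion. Fix $\fp\in\mcV_G(X)\cap\mcV_G(Y)$ and introduce
\[
\sfC=\{W\in\KInj{kG}\mid \Gamma_\fp X\otimes_k W=0\}.
\]
This is evidently closed under coproducts, triangles, and tensoring with arbitrary objects, hence is a tensor ideal localising subcategory of $\KInj{kG}$. The key observation is that $\Gamma_\fp\one\notin\sfC$: idempotence of $\Gamma_\fp$ yields $\Gamma_\fp X\otimes_k\Gamma_\fp\one\cong\Gamma_\fp(\Gamma_\fp X)\cong\Gamma_\fp X$, which is non-zero by hypothesis. Therefore $\sfC\cap\Gamma_\fp\KInj{kG}$ is a tensor ideal localising subcategory of $\KInj{kG}$ that is strictly smaller than $\Gamma_\fp\KInj{kG}$. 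By Theorem \ref{th:KInj-min} the subcategory $\Gamma_\fp\KInj{kG}$ is minimal among non-zero tensor ideal localising subcategories, so this intersection must be zero. Since $\Gamma_\fp Y$ is a non-zero object of $\Gamma_\fp\KInj{kG}$, it cannot lie in $\sfC$, giving $\Gamma_\fp X\otimes_k Y\cong \Gamma_\fp X\otimes_k\Gamma_\fp Y\neq 0$, i.e.\ $\fp\in\mcV_G(X\otimes_k Y)$.

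The hard step is not in the argument above but in Theorem \ref{th:KInj-min} itself, which was established in the preceding section. Once minimality of $\Gamma_\fp\KInj{kG}$ is in hand, the tensor product theorem is reduced to the slick observation that $\Gamma_\fp\one$ witnesses the non-triviality of tensoring with $\Gamma_\fp X$; no further input (Dade's lemma, rank varieties, shifted subgroups) is required.
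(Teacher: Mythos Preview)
Your proof is correct and follows essentially the same route as the paper's. The only difference is cosmetic: where the paper invokes the classification to say $\Gamma_\fp ik\in\Loc^\otimes(\Gamma_\fp X)$ and then tensors with $Y$ to get $\Gamma_\fp Y\in\Loc^\otimes(\Gamma_\fp(X\otimes_k Y))$, you phrase the same use of minimality via the annihilator subcategory $\sfC=\{W\mid\Gamma_\fp X\otimes_k W=0\}$; these are equivalent formulations of the single input, namely Theorem~\ref{th:KInj-min}.
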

\begin{proof}
Part (2) follows from (1) via the identification of $\StMod{kG}$ with $\KacInj{kG}$.
 
(1) One has an isomorphism $\Gamma_{\fp}(X\otimes_{k}Y)\cong \Gamma_{\fp}X\otimes_{k}\Gamma_{\fp}Y$, which yields an inclusion $\mcV_G(X \otimes_k Y)\subseteq\mcV_G(X)\cap\mcV_G(Y)$.  Conversely, if $\fp\in\mcV_G(X)\cap\mcV_G(Y)$ then $\Gamma_\fp X\ne 0$ and $\Gamma_\fp Y\ne 0$. Theorem \ref{th:classify-KInj} implies that $\Gamma_\fp ik$ is in $\Loc^\otimes(\Gamma_\fp X)$, hence that $\Gamma_\fp Y$ is in $\Loc^\otimes(\Gamma_{\fp}(X\otimes_k Y))$. Since $\Gamma_\fp Y\ne 0$ it follows that $\Gamma_\fp (X \otimes_k Y) \ne 0$.
\end{proof}

\subsection*{The subgroup theorem}\ \medskip

\noindent
The theorem below strengthens Theorem~\ref{th:subgp-elem} from
elementary abelian $p$-groups to all finite groups. For $\StMod(kG)$
and $k$ algebraically closed it was proved in
\cite{Benson/Carlson/Rickard:1996a}.

\begin{theorem}
\label{th:subgp-full}
Let $H\le G$ be a subgroup. For any complex $X$ in $\KInj{kG}$ there is an equality
\[ 
\mcV_{H}(X\da_{H})=(\res_{G,H}^*)^{-1}\mcV_G(X). 
\]
The analogous statement where $\KInj{kG}$ is replaced by $\StMod(kG)$ also holds.
\end{theorem}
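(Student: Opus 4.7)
The plan is to establish the two inclusions of the claimed equality by separate arguments, treating the $\KInj{kG}$ statement first and deducing the $\StMod(kG)$ statement at the end. The containment
\[
\mcV_H(X{\da}_H) \subseteq (\res_{G,H}^*)^{-1}\mcV_G(X)
\]
is formal from work already done: given $\fq\in\mcV_H(X{\da}_H)$, Proposition~\ref{pr:V-ind}(2) places $\res_{G,H}^*(\fq)$ into $\mcV_G((X{\da}_H){\ua}^G)$, and Proposition~\ref{pr:V-ind}(1) contains the latter in $\mcV_G(X)$. I would dispose of this direction first.

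The substantive content lies in the reverse inclusion, where one starts with $\fq\in\mcV_H$ whose restriction $\fp=\res_{G,H}^*(\fq)$ lies in $\mcV_G(X)$ and must produce the non-vanishing of $\Gamma_\fq(X{\da}_H)$. My key input will be the minimality proved in Theorem~\ref{th:KInj-min}: since $\Gamma_\fp\KInj{kG}$ is a minimal tensor ideal localising subcategory and $\Gamma_\fp X$ is a non-zero object in it, one has
\[
\Gamma_\fp\, ik \in \Loc^\otimes(\Gamma_\fp X).
\]
I would then push this containment down to $\KInj{kH}$ via the restriction functor $(-){\da}_H$. This functor is exact, preserves coproducts, and respects tensor products because restricting the diagonal action of $G$ along $H\subseteq G$ yields the diagonal action of $H$. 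Applying it gives
\[
(\Gamma_\fp\, ik){\da}_H \in \Loc^\otimes\bigl((\Gamma_\fp X){\da}_H\bigr),
\]
and Lemma~\ref{le:kappa-res}(1) identifies both sides as coproducts indexed by $\mathcal{U}=(\res_{G,H}^*)^{-1}\{\fp\}$ (applied once with $X=ik$ and once with the given $X$).

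Next I would extract the $\fq$-isotypic component by applying the exact tensor-preserving functor $\Gamma_\fq$, using orthogonality of $\Gamma_{\fq'}$ for distinct primes: the tensor product $\Gamma_\fq\, ik\otimes_k\Gamma_{\fq'}ik$ has empty support and hence vanishes by the stratification of $\KInj{kH}$. This collapses the two coproducts to their single $\fq$-summands and yields
\[
\Gamma_\fq\, ik \in \Loc^\otimes\bigl(\Gamma_\fq(X{\da}_H)\bigr).
\]
Since $\mcV_H(k)=\Spec H^*(H,k)$ contains $\fq$, the left-hand object is non-zero, which forces $\Gamma_\fq(X{\da}_H)\ne 0$; that is, $\fq\in\mcV_H(X{\da}_H)$, completing the second inclusion. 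The main obstacle I anticipate is verifying rigorously that $(-){\da}_H$ transports tensor ideal containments and that the orthogonality $\Gamma_\fq\Gamma_{\fq'}=0$ can be pulled inside the $\Loc^\otimes$ operator; both are delicate but follow directly from Theorem~\ref{th:KInj-min} and the formula $\Gamma_\fp Y\cong \Gamma_\fp\,ik\otimes_k Y$.

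Finally, the analogous statement for $\StMod(kG)$ I would deduce by transporting along the triangulated equivalence $\StMod(kG)\xrightarrow\sim\KacInj{kG}$ and its counterpart for $H$, noting that restriction commutes with Tate resolutions and that $\mcV_H$, $\mcV_G$ on stable categories are computed from the same cohomological supports as on $\KInj$ with the maximal ideal omitted.
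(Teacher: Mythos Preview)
Your argument is correct, but it takes a different route from the paper's. Both proofs hinge on the stratification of $\KInj{kG}$ established in Theorem~\ref{th:KInj-min}, so neither is more elementary; they simply deploy that input in opposite directions across the restriction--induction adjunction.

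The paper mimics the proof of Theorem~\ref{th:subgp-elem} verbatim, replacing the appeal to Theorem~\ref{th:strat-kE} by Theorem~\ref{th:classify-KInj}. Concretely, for a fixed $\fq\in\mcV_H$ with $\fp=\res_{G,H}^*(\fq)$, one observes via Proposition~\ref{pr:V-ind} that $\mcV_G(\Gamma_\fq ik{\ua}^G)=\{\fp\}=\mcV_G(\Gamma_\fp ik)$, so the classification gives $\Loc^\otimes(\Gamma_\fq ik{\ua}^G)=\Loc^\otimes(\Gamma_\fp ik)$. This yields in one stroke the chain of equivalences
\[
\Gamma_\fq(X{\da}_H)\ne 0 \iff \Gamma_\fq ik{\ua}^G\otimes_k X\ne 0 \iff \Gamma_\fp ik\otimes_k X\ne 0 \iff \Gamma_\fp X\ne 0,
\]
using \eqref{eq:indres} and faithfulness of induction for the first step. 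Both inclusions fall out at once.

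Your approach instead uses minimality to place $\Gamma_\fp ik$ inside $\Loc^\otimes(\Gamma_\fp X)$, then \emph{restricts} to $H$ and applies $\Gamma_\fq$ to isolate the desired summand via Lemma~\ref{le:kappa-res}. This works, but it obliges you to check that $(-){\da}_H$ and $\Gamma_\fq(-)$ carry tensor-ideal localising containments forward (which they do, as you note), and it treats the two inclusions separately. One small point: the orthogonality $\Gamma_\fq\Gamma_{\fq'}=0$ for $\fq\ne\fq'$ does not require stratification of $\KInj{kH}$; it follows already from the local-global principle (Theorem~\ref{th:local-global}) or directly from the description of $\Gamma_\fp\sfT$ in terms of $\fp$-local $\fp$-torsion objects. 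Invoking stratification there is harmless overkill. The paper's route is a little shorter and more symmetric; yours makes the descent to $H$ more explicit.
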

\begin{proof}
The argument for $\KInj{kG}$ is the same as the proof of Theorem~\ref{th:subgp-elem} except that one uses Theorem~\ref{th:classify-KInj} instead of Theorem~\ref{th:strat-kE}. The statement about $\StMod(kG)$ is deduced by identifying  it with the acyclic complexes in $\KInj{kG}$.
\end{proof}

\subsection*{The thick subcategory theorem}\ \medskip

\noindent
Recall from \cite[Proposition 2.3]{Krause:2005a} that the compact objects in $\KInj{kG}$ are the semi-injective resolutions of bounded complexes of finitely generated $kG$-modules, and that the canonical functor $\KInj{kG}\to\sfD(\Mod kG)$ induces an equivalence of triangulated categories $\KInjc{kG}\xrightarrow{\sim}\sfD^{\mathsf b}(\mmod kG)$; we view this as an identification.

\begin{lemma}\label{le:fg}
Let $C$ be a direct sum of the simple $kG$-modules. For any complex $X$ in $\sfD^{\mathsf b}(\mmod kG)$ one has $\mcV_G(X)=\mcV(\fa)$ where $\fa$ is the annihilator of the $H^{*}(G,k)$-module $H^*(G,C \otimes_k X)$.
\end{lemma}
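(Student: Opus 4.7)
The strategy is to invoke the general principle recorded in Section~\ref{se:strat}, following \cite[Theorem 5.5(1)]{Benson/Iyengar/Krause:2008a}: if $D$ is a compact generator of a triangulated category $\sfT$ acted on by a noetherian graded ring $R$ and $\Hom^{*}_{\sfT}(D,X)$ is finitely generated over $R$, then $\supp_R X = \mcV(\operatorname{Ann}_R \Hom^{*}_{\sfT}(D,X))$. Applied with $\sfT=\KInj{kG}$, $R=H^{*}(G,k)$, and $D=C$, this is exactly the statement of the lemma, provided I can (a) verify that $C$ is a compact generator, (b) identify $\Hom^{*}_{\sfK(kG)}(C,X)$ with $H^{*}(G,C\otimes_{k}X)$ as a module over $H^{*}(G,k)$, and (c) check that $H^{*}(G,C\otimes_{k}X)$ is finitely generated over $H^{*}(G,k)$.

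For (a), note that $kG$ being Artinian means there are only finitely many isomorphism classes of simples, so $C$ is a finite-dimensional $kG$-module; its semi-injective resolution therefore lies in $\KInjc{kG}=\sfD^{\mathsf b}(\mmod kG)$, so $C$ is compact. Every finitely generated $kG$-module admits a composition series with simple subquotients, hence belongs to $\Thick(C)$; thus $\Thick(C)=\KInjc{kG}$ and so $\Loc(C)=\KInj{kG}$. For (b), the standard adjunction gives an $H^{*}(G,k)$-linear isomorphism
\[
\Hom^{*}_{\sfK(kG)}(C,X)\cong \Hom^{*}_{\sfK(kG)}(\ik,\,C^{*}\otimes_{k}X) = H^{*}(G,\,C^{*}\otimes_{k}X),
\]
and since the collection of isomorphism classes of simple $kG$-modules is closed under $k$-linear duality, one has $C^{*}\cong C$, so this equals $H^{*}(G,C\otimes_{k}X)$. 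The action on the left is the canonical one induced through $C\otimes -$; on the right it is the canonical action of $H^{*}(G,k)=\End^{*}_{\sfK(kG)}(\ik)$, and these agree under the adjunction.

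For (c), the complex $C\otimes_{k}X$ is a bounded complex of finitely generated $kG$-modules, and by the Evens--Venkov--Golod theorem each $H^{*}(G,M)$ is finitely generated over $H^{*}(G,k)$ for $M$ a finitely generated $kG$-module; an induction on the length of the complex (using long exact sequences coming from brutal truncations) then shows that $H^{*}(G,C\otimes_{k}X)$ is finitely generated over $H^{*}(G,k)$. Combining (a), (b), (c) with \cite[Theorem~5.5(1)]{Benson/Iyengar/Krause:2008a} yields $\mcV_{G}(X)=\mcV(\fa)$. The only delicate point is the compatibility of $H^{*}(G,k)$-actions in step (b); everything else is routine.
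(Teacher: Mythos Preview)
Your argument is correct and follows exactly the approach of the paper, which simply records that the lemma is a restatement of \cite[Theorem~5.5(1)]{Benson/Iyengar/Krause:2008a} in this context. You have filled in the verification of the hypotheses (compact generation by $C$, the adjunction identifying $\Hom^{*}_{\sfK(kG)}(C,X)$ with $H^{*}(G,C\otimes_{k}X)$ via self-duality of $C$, and finite generation via Evens--Venkov) that the paper leaves implicit; the only cosmetic point is that the paper reserves $(-)^{*}$ for the Brown--Comenetz dual, so writing $C^{\vee}=\Hom_{k}(C,k)$ would match the ambient notation.
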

\begin{proof}
This is a restatement of \cite[Theorem 5.5 (1)]{Benson/Iyengar/Krause:2008a} in this context.
\end{proof}

In view of the equivalence of tensor triangulated categories
\[
\sfD^{\mathsf b}(\mmod kG)/\sfD^{\mathsf b}(\proj kG)\xrightarrow\sim\stmod (kG)
\]
given by \cite[Theorem 2.1]{Rickard:1989a}, the following theorem  generalises, with a new proof, a classification of the tensor ideal thick subcategories of $\stmod (kG)$ from  \cite{Benson/Carlson/Rickard:1997a}.

\begin{theorem}
\label{th:thick}
There is a one to one correspondence between tensor
ideal thick subcategories of $\sfD^{\mathsf b}(\mmod kG)$ and specialisation
closed subsets $\mcV$ of $\mcV_G$.  

The thick subcategory corresponding to
$\mcV$ is the full subcategory of objects $X$ in $\sfD^{\mathsf b}(\mmod kG)$ such
that $\mcV_G(X)\subseteq\mcV$.
\end{theorem}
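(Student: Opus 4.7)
The plan is to construct mutually inverse maps
\[ \sigma(\sfC) = \bigcup_{X \in \sfC} \mcV_G(X), \qquad \tau(\mcV) = \{X \in \KInjc{kG} \mid \mcV_G(X) \subseteq \mcV\} \]
between tensor ideal thick subcategories of $\KInjc{kG}=\sfD^{\mathsf b}(\mmod kG)$ and specialisation closed subsets of $\mcV_G$, and then reduce the bijection to the classification theorem for $\KInj{kG}$ (Theorem~\ref{th:classify-KInj}), following the standard Hopkins--Neeman template. Lemma~\ref{le:fg} ensures that $\mcV_G(X)$ is Zariski-closed when $X$ is compact, so $\sigma(\sfC)$ is a union of closed sets, hence specialisation closed; the tensor product theorem together with the usual behaviour of $\mcV_G$ under triangles and summands shows that $\tau(\mcV)$ is a tensor ideal thick subcategory.

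The inclusions $\sigma\tau(\mcV)\subseteq\mcV$ and $\sfC\subseteq\tau\sigma(\sfC)$ are immediate from the definitions. For $\mcV\subseteq\sigma\tau(\mcV)$, given $\fp\in\mcV$ I pick a finite homogeneous generating sequence for $\fp$ and form the Koszul object $K_\fp=\kos{\ik}{\fp}$, which lies in $\KInjc{kG}$. Combining Lemma~\ref{le:koszul-pr}, Proposition~\ref{pr:loc-pr}(2) and Theorem~\ref{th:classify-KInj}, one identifies $\mcV_G(K_\fp)=\mcV(\fp)$, which is contained in $\mcV$ by specialisation closure; hence $K_\fp\in\tau(\mcV)$ and $\fp\in\mcV_G(K_\fp)\subseteq\sigma\tau(\mcV)$.

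The crux of the proof is $\tau\sigma(\sfC)\subseteq\sfC$. Fix a compact $X$ with $\mcV_G(X)\subseteq\sigma(\sfC)$. I first verify that $\Loc(\sfC)$, taken inside $\KInj{kG}$, is already tensor ideal: for each fixed $Y\in\sfC$ the category $\{M\in\KInj{kG}:Y\otimes_k M\in\Loc(\sfC)\}$ is localising and contains every compact object of $\KInj{kG}$ (since $\sfC$ is tensor ideal in $\KInjc{kG}$), so it equals $\KInj{kG}$; by symmetry, $\Loc^{\otimes}(\sfC)=\Loc(\sfC)$. Since support is additive over coproducts and triangles, this localising subcategory has support exactly $\sigma(\sfC)$, and Theorem~\ref{th:classify-KInj} then gives $X\in\Loc(\sfC)$. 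The Neeman--Thomason theorem \cite{Neeman:1992a} that compacts of a compactly generated localising subcategory form the thick subcategory they generate now yields $\Loc(\sfC)\cap\KInjc{kG}=\Thick(\sfC)=\sfC$, the last equality because $\sfC$ is already thick. Hence $X\in\sfC$, as required.

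The main obstacle is the final step: one must check that $\Loc^{\otimes}(\sfC)$ is already generated as a plain localising subcategory by the compact objects of $\sfC$, which is what legitimises invoking the Neeman--Thomason identification of the compact part of a compactly generated localising subcategory. Once this is in place the rest is bookkeeping starting from Theorem~\ref{th:classify-KInj}.
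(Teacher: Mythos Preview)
Your proof is correct and follows essentially the same approach as the paper's: pass from a tensor ideal thick subcategory $\sfC$ to the tensor ideal localising subcategory it generates in $\KInj{kG}$, invoke Theorem~\ref{th:classify-KInj}, and recover $\sfC$ as the intersection with the compact objects; for surjectivity both you and the paper use Koszul objects $\kos{ik}{\fp}$. The only real differences are in packaging: the paper cites Rickard \cite[\S5]{Rickard:1997a} for the identification $\Loc^{\otimes}(\sfC)\cap\KInjc{kG}=\sfC$, whereas you cite the Neeman--Thomason theorem and, to make it applicable, first show explicitly that $\Loc(\sfC)$ is already tensor ideal (so that $\Loc(\sfC)=\Loc^{\otimes}(\sfC)$) --- a step the paper absorbs into the Rickard citation.
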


\begin{proof}
Let $\sfC$ be a tensor ideal thick subcategory of
$\sfD^{\mathsf b}(\mmod kG)$, and let $\sfC'$ be the tensor ideal
localising subcategory of $\KInj{kG}$ generated by $\sfC$. It
follows using the arguments described in \cite[\S5]{Rickard:1997a}
that $\sfC' \cap \sfD^{\mathsf b}(\mmod kG)=\sfC$. The supports of
$\sfC$ and $\sfC'$ coincide, so the map sending $\sfC$ to its
support is injective by Theorem~\ref{th:classify-KInj}.

It follows from Lemma~\ref{le:fg} that the support of a tensor ideal
thick subcategory is specialisation closed. On the other hand, if
$\mcV$ is a specialisation closed subset of $\mcV_G$, then the support
of the tensor ideal thick subcategory generated by $\{\kos
{ik}{\fp}\mid\fp\in\mcV\}$ equals $\mcV$, by
Proposition~\ref{pr:loc-pr}(2).  So every specialisation closed subset
of $\mcV_G$ occurs as the support of some tensor ideal thick
subcategory $\sfC$ of $\sfD^{\mathsf b}(\mmod kG)$.
\end{proof}

\subsection*{Localising subcategories closed under products and duality}\ \medskip

\noindent
Let $k$ be a field and $(\sfT,\otimes,\one)$ a $k$-linear tensor triangulated category.  There are two notions of duality in $\sfT$. The \emph{Spanier-Whitehead dual} $X^\vee$ of an object $X$ is defined as the function object $\funct(X,\one)$ as in \cite[\S8]{Benson/Iyengar/Krause:2008a} by the adjunction
\[
\Hom_\sfT(-\otimes X,\one)\cong\Hom_\sfT(-,\funct(X,\one)).
\]
The \emph{Brown-Comenetz dual} $X^*$ of an object $X$ is defined  by the isomorphism
\[
\Hom_k(\Hom_\sfT(\one,-\otimes X),k)\cong\Hom_\sfT(-,X^*).
\]
The language and notation is borrowed from stable homotopy theory. The commutativity of the tensor product implies that both dualities are self adjoint. Thus we have a natural isomorphism
$\Hom_\sfT(X,Y^*)\cong \Hom_\sfT(Y,X^*)$ which gives rise to a natural
\emph{biduality morphism} $ X\to X^{**}$.

\begin{lemma}
\label{lem:bc}
The following statements hold for each $X$ in $\sfT$.
\begin{enumerate}[\quad\rm(1)]
\item There is a natural isomorphism $X^*\cong\funct(X,\one^*)$.
\item For each compact object $C$, applying $\Hom_{\sfT}(C,-)$ to the morphism $X\to X^{**}$ yields the biduality map
\[
\Hom_{\sfT}(C,X)\to \Hom_{k}(\Hom_{k}(\Hom_{\sfT}(C,X),k),k).
\]
\item If $X^{*}=0$, then $X=0$.
\end{enumerate}
\end{lemma}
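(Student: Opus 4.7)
The plan is to reduce each statement to a chain of canonical adjunction isomorphisms, using the definitions of $(-)^{\vee}$ and $(-)^{*}$, and the strong dualisability of compact objects.

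For (1), I would invoke the Yoneda lemma. For any $Y\in\sfT$, the tensor-hom adjunction defining $\funct$ gives
\[
\Hom_{\sfT}(Y,\funct(X,\one^{*}))\cong\Hom_{\sfT}(Y\otimes X,\one^{*}),
\]
and the defining adjunction for $\one^{*}$ rewrites this as $\Hom_{k}(\Hom_{\sfT}(\one,Y\otimes X),k)$, which equals $\Hom_{\sfT}(Y,X^{*})$ by definition of $X^{*}$. All isomorphisms are natural in $Y$, so Yoneda yields $X^{*}\cong\funct(X,\one^{*})$.

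For (2), the idea is to rewrite both $\Hom_{\sfT}(C,X)$ and $\Hom_{\sfT}(C,X^{**})$ in terms of $\Hom_{\sfT}(\one,-\otimes-)$, then dualise. Using the definition of $X^{**}$ followed by strong dualisability of $C$ (so that $C\otimes -\cong\funct(C^{\vee},-)$ and $\Hom_{\sfT}(\one,C\otimes -)\cong\Hom_{\sfT}(C^{\vee},-)$), I would chain the natural isomorphisms
\[
\Hom_{\sfT}(C,X^{**})\cong\Hom_{k}(\Hom_{\sfT}(\one,C\otimes X^{*}),k)\cong\Hom_{k}(\Hom_{\sfT}(C^{\vee},X^{*}),k),
\]
and then apply the definition of $X^{*}$ once more, together with $\Hom_{\sfT}(\one,C^{\vee}\otimes X)\cong\Hom_{\sfT}(C,X)$, to arrive at $\Hom_{k}(\Hom_{k}(\Hom_{\sfT}(C,X),k),k)$. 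The remaining task is to verify that this composite of canonical isomorphisms carries $\Hom_{\sfT}(C,-)$ applied to the unit of the self-adjoint pair $((-)^{*},(-)^{*})$ to the $k$-linear biduality map; this is a naturality check: the biduality $X\to X^{**}$ corresponds under the adjunction $\Hom_{\sfT}(Y,X^{*})\cong\Hom_{\sfT}(X,Y^{*})$ (with $Y=X^{*}$) to the identity, and unravelling the isomorphisms above in this case reduces it precisely to the identity on $\Hom_{\sfT}(C,X)$ on one side and the canonical evaluation map on the other. This naturality bookkeeping is the main obstacle, but it is a formal diagram chase.

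For (3), suppose $X^{*}=0$. Let $C$ be any compact object of $\sfT$. By the definition of $X^{*}$ we have
\[
0=\Hom_{\sfT}(C,X^{*})\cong\Hom_{k}(\Hom_{\sfT}(\one,C\otimes X),k),
\]
so the $k$-vector space $\Hom_{\sfT}(\one,C\otimes X)$ is zero. By strong dualisability of $C$ this space is naturally isomorphic to $\Hom_{\sfT}(C^{\vee},X)$, and $C^{\vee}$ is again compact. As $C$ ranges over a set of compact generators of $\sfT$, so does $C^{\vee}$, so $\Hom_{\sfT}^{*}(D,X)=0$ for every compact $D$ (using shifts as well), whence $X=0$ because $\sfT$ is compactly generated.
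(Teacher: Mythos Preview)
Your proposal is correct and follows essentially the same route as the paper. For (1) and (2) the arguments are virtually identical: the paper also reduces to Yoneda plus the defining adjunctions, and for (2) it packages the naturality check you describe into an explicit commutative square relating $\Hom_\sfT(X^*,X^*)$ to $\Hom_k(\Hom_\sfT(C,X),\Hom_\sfT(C,X)^{\dagger\dagger})$ via the isomorphism $\eta\colon\Hom_\sfT(C,X)^\dagger\xrightarrow{\sim}\Hom_\sfT(C^\vee,X^*)$. For (3) the paper takes a shorter path: from $X^*=0$ one has $X^{**}=0$, so by (2) the $k$-linear biduality map on $\Hom_\sfT(C,X)$ has zero target; since this map is injective over a field, $\Hom_\sfT(C,X)=0$. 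Your direct argument via $\Hom_\sfT(C^\vee,X)$ also works, but the step ``as $C$ ranges over a set of compact generators, so does $C^\vee$'' is cleanest if you let $C$ range over \emph{all} compact objects and use that $(-)^\vee$ is an involution on them.
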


\begin{proof}
The first claim is a consequence of the isomorphisms
\begin{align*}
\Hom_\sfT(-,X^*)&\cong\Hom_\sfT(-\otimes\one,X^*)\\
&\cong\Hom_\sfT(-,\funct(\one,X^*))\\
&\cong\Hom_\sfT(-,\funct(X,\one^*))
\end{align*}
where the last one is immediate from the definition of the Brown-Comenetz dual.

(2) Set $(-)^\dagger=\Hom_k(-,k)$.  There is a natural isomorphism
$\eta\colon\Hom_\sfT(C,X)^\dagger\xra{\sim} \Hom_\sfT(C^\vee,X^*)$
which is compatible with the adjunction isomorphisms for $(-)^*$ and
$(-)^\dagger$. This follows from the defining isomorphism of the
Brown-Comenetz dual and the fact that $C$ is a strongly dualising
object.  Thus the following diagram commutes.
\[
\xymatrix{
\Hom_\sfT(X^*,X^*)\ar[dd]_\sim\ar[rr]^-{\Hom_\sfT(C^\vee,-)}&&
\Hom_k(\Hom_\sfT(C^\vee,X^*),\Hom_\sfT(C^\vee,X^*))\ar[d]_\sim^{\Hom_k(\eta,\eta^{-1})}\\
&&\Hom_k(\Hom_\sfT(C,X)^\dagger,\Hom_\sfT(C,X)^\dagger)\ar[d]_\sim\\
\Hom_\sfT(X,X^{**})\ar[rr]^-{\Hom_\sfT(C,-)}&&
\Hom_k(\Hom_\sfT(C,X),\Hom_\sfT(C,X)^{\dagger\dagger})
}
\]
This justifies the claim.

(3) When $X^{*}=0$ it follows from (2) that $\Hom_{\sfT}(C,X)=0$ for any compact object $C$, since $k$ is a field. Thus $X=0$ as claimed.
\end{proof}

In $\StMod{kG}$ the function object is $\Hom_{k}(M,N)$, with diagonal action. The Spanier-Whitehead dual and the Brown-Comenetz dual of $kG$-modules are closely related. As usual for a $kG$-module $N$ we write $\Omega N$ and $\Omega^{-1}N$ for the kernel of a projective cover of $N$ and the cokernel of an injective envelope of $N$, respectively.

\begin{proposition}
\label{pr:stmodduals}
In $\StMod(kG)$ for each $kG$-module $M$ there are isomorphisms
\[
M^\vee\cong\Hom_k(M,k)\quad\text{and}\quad
M^*\cong\Omega\Hom_k(M,k).
\]
Hence $M^\vee=0$ if and only if $M^{*}=0$ if and only if $M$ is projective.
\end{proposition}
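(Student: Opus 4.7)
First, I would establish $M^\vee \cong \Hom_k(M,k)$ in $\StMod(kG)$. The Frobenius adjunction in $\Mod(kG)$,
\[
\Hom_{kG}(L\otimes_k M, k)\cong \Hom_{kG}(L, \Hom_k(M,k)),
\]
descends to $\StMod(kG)$: for any projective $kG$-module $P$, both $P\otimes_k M$ and $\Hom_k(M,P)\cong P\otimes_k \Hom_k(M,k)$ are projective, since tensoring with a projective over a group algebra yields a projective. Hence $\Hom_k(M,k)$ represents the functor $\Hom_{\StMod(kG)}(-\otimes_k M, k)$, proving $M^\vee\cong \Hom_k(M,k)$.

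Next, by Lemma~\ref{lem:bc}(1) applied to $\sfT=\StMod(kG)$, whose function object is $\Hom_k(-,-)$ with diagonal action and whose unit is $k$, one has $M^*\cong \Hom_k(M, k^*)$. Granting the identification $k^*\cong \Omega k$ (the crux, treated below), the isomorphism $M^*\cong \Omega\Hom_k(M,k)$ follows by applying the exact functor $\Hom_k(M,-)$ to the short exact sequence $0\to \Omega k\to P\to k\to 0$ with $P$ a projective cover of $k$: the middle term $\Hom_k(M,P)\cong P\otimes_k \Hom_k(M,k)$ is projective, so the left-hand term is $\Omega\Hom_k(M,k)$ in $\StMod(kG)$.

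The main step, and the principal obstacle, is to show $k^*\cong \Omega k$. Unpacking the defining adjunction of the Brown-Comenetz dual, this amounts to a natural isomorphism
\[
\Hom_{\StMod(kG)}(N,\Omega k)\cong \Hom_k(\Hom_{\StMod(kG)}(k,N),k)
\]
for every $kG$-module $N$. The left side is $\widehat\Ext^{-1}_{kG}(N,k)$ and the right is $\Hom_k(\widehat H^0(G,N),k)$, so for $N$ finitely generated this is classical Tate duality. For general $N$, write $N=\varinjlim N_\alpha$ as the filtered colimit of its finitely generated submodules: since $k$ is compact in $\StMod(kG)$ and $\Hom$ converts colimits in the first variable to limits, both functors $\widehat\Ext^{-1}_{kG}(-,k)$ and $\Hom_k(\widehat H^0(G,-),k)$ send this filtered colimit to the common inverse limit, so the isomorphism extends.

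For the concluding equivalences: since $\Omega$ is an autoequivalence of $\StMod(kG)$, the identification $M^*\cong \Omega M^\vee$ forces $M^*=0$ in $\StMod(kG)$ if and only if $M^\vee=0$. Lemma~\ref{lem:bc}(3) then yields $M^*=0 \Rightarrow M=0$ in $\StMod(kG)$, i.e., $M$ is projective in $\Mod(kG)$; the converse is immediate since both dualities vanish on the zero object.
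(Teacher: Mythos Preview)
Your strategy of reducing to $k^*\cong\Omega k$ via Lemma~\ref{lem:bc}(1) is a clean reorganisation, and the first, fourth, and final paragraphs are fine. The gap is in establishing $k^*\cong\Omega k$ by extension from finitely generated modules.

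You assert that $\widehat\Ext^{-1}_{kG}(-,k)=\Hom_{\StMod(kG)}(-,\Omega k)$ sends a filtered colimit in $\Mod(kG)$ to the inverse limit, on the grounds that ``$\Hom$ converts colimits in the first variable to limits''. This holds for $\Hom_{kG}(-,\Omega k)$ in the module category, but the stable $\Hom$ is a \emph{quotient} of it, and quotients do not commute with inverse limits. Concretely, $\Hom_{\StMod(kG)}(N,\Omega k)$ is the cokernel of $\Hom_{kG}(N,P)\to\Hom_{kG}(N,\Omega k)$ for $P$ a projective cover of $\Omega k$; each term sends colimits to limits, but the cokernel need not, absent a Mittag--Leffler condition you have not checked. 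Equivalently, via a Tate resolution $tk$ one has $\widehat\Ext^{-1}_{kG}(N,k)=H^{-1}\Hom_{kG}(N,tk)$, and cohomology does not commute with inverse limits of complexes in general. The claim turns out to be true, but only \emph{because} of the isomorphism you are trying to prove.

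The paper sidesteps this by invoking Tate duality in the form
\[
\Hom_\sfT(L,k)\cong\Hom_k(\Hom_\sfT(k,\Omega L),k),
\]
valid for \emph{all} $kG$-modules $L$ (Cartan--Eilenberg, XII.6.4), and applying it directly with $L=\Omega^{-1}N\otimes_kM$ to obtain $\Hom_\sfT(N,\Omega\Hom_k(M,k))\cong\Hom_k(\Hom_\sfT(k,N\otimes_kM),k)$. The fix to your argument is the same: cite this general form of Tate duality to get $k^*\cong\Omega k$ for all $N$ at once, making the colimit extension unnecessary.
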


\begin{proof}
The expression for $M^{\vee}$ is by definition. Set $\sfT=\StMod(kG)$. For each $kG$-module $N$, Tate duality~\cite[Chapter XII, Theorem 6.4]{Cartan/Eilenberg:1956} gives the third isomorphism below:
\begin{align*}
\Hom_{\sfT}(N,\Omega\Hom_{k}(M,k)) 
  &\cong \Hom_{\sfT}(\Omega^{-1}N,\Hom_{k}(M,k)) \\
  &\cong \Hom_{\sfT}(\Omega^{-1}N\otimes_{k}M,k) \\
  &\cong \Hom_{k}(\Hom_{\sfT}(k,\Omega((\Omega^{-1}N)\otimes_{k}M)),k) \\
  &\cong \Hom_{k}(\Hom_{\sfT}(k,N\otimes_{k}M),k).
\end{align*}
The other isomorphisms are standard. Thus $M^{*}\cong \Omega\Hom_{k}(M,k)$.

In $\StMod(kG)$ one has $\Omega N=0$ if and only if $N=0$. Therefore the last claim follows from Lemma~\ref{lem:bc}(3).
\end{proof}

The situation in $\KInj{kG}$ is more complicated. Observe that in this category the function object of complexes $X$ and $Y$ is the complex $\Hom_k(X,Y)$ of injective $kG$-modules with diagonal action: $(g\phi)(x)=g(\phi(g^{-1}x))$. 

\begin{proposition}
\label{pr:duals}
For each $X$ in $\KInj{kG}$ there are isomorphisms
\[
X^\vee=\Hom_k(X,ik)\quad\text{and}\quad
X^*\cong\Hom_k(X,ik^*)\cong\Hom_k(X,pk).
\]
Hence $X^{\vee}$ is semi-injective, and $X^\vee=0$ if and only if $X$ is acyclic. Furthermore $X^{*}=0$ if and only if $X=0$.
\end{proposition}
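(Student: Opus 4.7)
The plan is to verify the claimed isomorphisms in turn and then read off the two vanishing statements.

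The equality $X^{\vee} = \Hom_k(X, ik)$ is immediate from the closed tensor triangulated structure on $\KInj{kG}$: the unit is $ik$ and the internal function object of $(X, Y)$ is $\Hom_k(X, Y)$ with diagonal $G$-action. The first isomorphism $X^{*} \cong \Hom_k(X, ik^{*})$ is Lemma~\ref{lem:bc}(1) applied to $\one = ik$.

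The substantive step is $\Hom_k(X, ik^{*}) \cong \Hom_k(X, pk)$, which I would deduce from $ik^{*} \cong pk$ in $\KInj{kG}$ by applying $\Hom_k(X,-)$. By Yoneda, producing such an isomorphism amounts to a natural isomorphism
\[
\Hom_{\sfK(kG)}(Y, pk) \xrightarrow{\ \sim\ } \Hom_k(\Hom_{\sfK(kG)}(ik, Y), k)
\]
for $Y$ in $\KInj{kG}$. Both sides are cohomological functors in $Y$ converting coproducts to products, and $ik$ is a compact generator by Theorem~\ref{th:kinj-generator}, so a natural transformation that is an isomorphism at $Y = ik$ extends to an isomorphism everywhere via the standard localising-subcategory argument. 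I would compute $\Hom^{*}_{\sfK(kG)}(ik, pk)$ from the long exact sequence obtained by applying $\Hom_{\sfK(kG)}(ik, -)$ to the recollement triangle $pk \to ik \to tk \to$, using the identifications $\Hom^{*}_{\sfK(kG)}(ik, ik) = H^{*}(G, k)$ and $\Hom^{*}_{\sfK(kG)}(ik, tk) = \widehat H^{*}(G, k)$ (the latter from $\KacInj{kG} \simeq \StMod(kG)$). The match with the graded dual $\Hom_k(H^{*}(G, k), k)$ then follows from classical Tate duality $\widehat H^{n}(G, k) \cong \Hom_k(\widehat H^{-n-1}(G, k), k)$ together with the fact that $H^{n}(G, k) \to \widehat H^{n}(G, k)$ is an isomorphism for $n \geq 1$. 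I expect this identification, together with the construction of a canonical natural transformation realising it (picking a specific $\alpha\colon pk \to ik^{*}$ from the universal property of $ik^{*}$), to be the main obstacle.

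For the remaining claims: semi-injectivity of $X^{\vee} = \Hom_k(X, ik)$ is immediate from the adjunction $\Hom_{\sfK(kG)}(Z, \Hom_k(X, ik)) \cong \Hom_{\sfK(kG)}(Z \otimes_k X, ik)$ together with semi-injectivity of $ik$ (a bounded-below complex of injective $kG$-modules). Hence $X^{\vee} = 0$ in $\KInj{kG}$ if and only if $X^{\vee}$ is acyclic as a complex; since $ik$ is quasi-isomorphic to $k$ as a complex of $k$-vector spaces and $\Hom_k$ preserves quasi-isomorphisms over a field, $\Hom_k(X, ik)$ has cohomology $\Hom_k(H^{-*}(X), k)$, which vanishes precisely when $X$ is acyclic. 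Finally $X^{*} = 0 \iff X = 0$ is Lemma~\ref{lem:bc}(3).
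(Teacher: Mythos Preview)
Your outline is correct and largely parallels the paper's, but your route to $ik^{*}\cong pk$ is more roundabout than necessary. The paper does not compute $\Hom^{*}_{\sfK(kG)}(ik,pk)$ via the recollement triangle and classical Tate duality at all; instead it writes down a direct chain of natural isomorphisms
\[
\Hom_{\sfK(kG)}(-,ik^{*})\cong\Hom_{k}(\Hom_{\sfK(kG)}(ik,-),k)\cong\Hom_{k}(\Hom_{\sfK(kG)}(k,-),k)\cong\Hom_{\sfK(kG)}(-,k)\cong\Hom_{\sfK(kG)}(-,pk),
\]
valid on all of $\KInj{kG}$. The second step is the claim from the proof of Theorem~\ref{th:kinj-generator} that $k\to ik$ induces an isomorphism on $\Hom_{\sfK}(-,I)$ for graded-injective $I$; the third is the degreewise duality $\Hom_{kG}(I,k)\cong\Hom_{k}(\Hom_{kG}(k,I),k)$ for injective $I$, which holds because $kG$ is symmetric; the fourth uses that complexes of injectives over $kG$ are complexes of projectives, so $pk\to k$ induces an isomorphism on $\Hom_{\sfK}(X,-)$. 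This produces the natural transformation and the isomorphism simultaneously, so the ``main obstacle'' you flag simply does not arise.

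Your approach would work, but you would still need to manufacture the map $\alpha\colon pk\to ik^{*}$ and then argue that the resulting natural transformation is an isomorphism at $ik$; the long exact sequence computation only tells you the two graded vector spaces are abstractly isomorphic, not that your chosen $\alpha$ realises the isomorphism. The paper's chain sidesteps this entirely. For the remaining assertions (semi-injectivity of $X^{\vee}$, the two vanishing criteria) your arguments match the paper's.
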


\begin{proof}
The expression for $X^{\vee}$ is by definition. For $X^{*}$ use Lemma~\ref{lem:bc}(1) and an isomorphism $ik^*\cong pk$, which is a variant of Tate duality:
\begin{align*}
\Hom_{\sfK(kG)}(-,ik^*)&\cong \Hom_k(\Hom_{\sfK(kG)}(ik,-),k)\\
&\cong\Hom_k(\Hom_{\sfK(kG)}(k,-),k)\\
&\cong\Hom_{\sfK(kG)}(-,k)\\
&\cong\Hom_{\sfK(kG)}(-,pk).
\end{align*}
The  adjunction isomorphism $\Hom_{\sfT}(-,X^{\vee})\cong \Hom_{\sfT}(X\otimes_{k}-,\ik)$ implies that $X^{\vee}$ is semi-injective, since $ik$ is semi-injective. Given this it is clear that $X^{\vee}=0$ precisely when $X$ is acyclic. The statement about $X^{*}$ is part of Lemma~\ref{lem:bc}.
\end{proof}

Given a subcategory $\sfC$ of a triangulated category $\sfT$ we define full subcategories
\begin{align*}
{^\perp}\sfC&=\{X\in\sfT\mid \text{$\Hom_\sfT^*(X,Y)=0$ for all $Y$ in $\sfC$}\} \\
\sfC^{\perp}&=\{X\in\sfT\mid \text{$\Hom_\sfT^*(Y,X)=0$ for all $Y$ in $\sfC$}\}.
\end{align*}
Evidently, ${^\perp}\sfC$ is a localising subcategory, and $\sfC^\perp$ is a colocalising subcategory, i.e., a thick subcategory closed under products.

\begin{theorem}
\label{th:dual-prod}
For any tensor ideal localising subcategory $\sfC$ of $\KInj{kG}$ the
following are equivalent:
\begin{enumerate}[\quad\rm(a)]
\item The subcategory $\sfC$ is closed under products.
\item The complement of the support of $\sfC$ in $\Spec H^*(G,k)$ is specialisation closed.
\item The subcategory $\sfC$ is equal to $\sfD^\perp$ with $\sfD$ a subcategory of compact objects.
\item The Brown-Comenetz dual of any object in $\sfC$ is also in $\sfC$.
\end{enumerate}
The analogous statement where $\KInj{kG}$ is replaced by $\StMod(kG)$ and the set $\Spec H^*(G,k)$ is replaced by  $\Proj H^*(G,k)$ also holds.
\end{theorem}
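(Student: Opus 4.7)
My plan is to use the classification in Theorem~\ref{th:classify-KInj} to translate each of (a)--(d) into a condition on $\mathcal{S} := \supp_R \sfC \subseteq \Spec H^*(G,k)$, where $R = H^*(G,k)$, and then to prove the cycle (c)$\Rightarrow$(a), (b)$\Rightarrow$(c), (a)$\Rightarrow$(d), (d)$\Rightarrow$(b). Throughout, I will use that $\sfC = \{X \in \KInj{kG} \mid \mcV_G(X) \subseteq \mathcal{S}\}$.

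The implication (c)$\Rightarrow$(a) is formal, since right orthogonals are always closed under products. For (b)$\Rightarrow$(c), let $\mathcal{U} := \Spec H^*(G,k) \setminus \mathcal{S}$, specialisation closed by hypothesis, and take $\sfD \subseteq \KInjc{kG}$ to be the tensor ideal thick subcategory of compact objects with support contained in $\mathcal{U}$, which exists by Theorem~\ref{th:thick}. Then $\sfC = \sfD^\perp$ is verified as follows. For $X \in \sfC$ and $D \in \sfD$, the set $\mcV_G(D)$ is specialisation closed and disjoint from $\mcV_G(X) \subseteq \mathcal{S}$, so $\Gamma_{\mcV_G(D)}X$ has empty support and therefore vanishes by Theorem~\ref{th:local-global}. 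The localisation triangle yields $X = L_{\mcV_G(D)}X$, and $\Hom^*_{\sfK(kG)}(D,X) = 0$ because $D \in \sfT_{\mcV_G(D)}$ while $L_{\mcV_G(D)}$ annihilates this subcategory. Conversely, if $X \in \sfD^\perp$ had $\fp \in \mcV_G(X) \cap \mathcal{U}$, then the Koszul object $\kos{\ik}{\fp}$ would lie in $\sfD$ (its support $\mcV(\fp)$ is in $\mathcal{U}$ by specialisation closure), and $\Gamma_\fp X \neq 0$ combined with $\sfT_{\mcV(\fp)} = \Loc(\kos{\ik}{\fp})$ from Proposition~\ref{pr:loc-pr}(2) would yield a nonzero $\Hom^*(\kos{\ik}{\fp},X)$, a contradiction.

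For (a)$\Rightarrow$(d), my approach is to realise the Brown-Comenetz dual $X^* \cong \Hom_k(X,pk)$ from Proposition~\ref{pr:duals} as a homotopy limit of compact pieces. Writing $X \cong \mathrm{hocolim}_\alpha X_\alpha$ as a homotopy colimit of compact objects, strong dualisability yields $X_\alpha^* \cong X_\alpha^\vee \otimes_k pk$; and since $\ik$ is compact, the defining adjunction of Brown-Comenetz duality gives $X^* \cong \mathrm{holim}_\alpha (X_\alpha^\vee \otimes_k pk)$. Each $X_\alpha$ lies in $\sfC$ (its support is contained in that of $X$), hence so does $X_\alpha^\vee$ (same support as $X_\alpha$ for compact objects), and therefore so does $X_\alpha^\vee \otimes_k pk$ by the tensor ideal property. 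Since homotopy limits are built from products and triangles, closure of $\sfC$ under products from (a) then forces $X^* \in \sfC$. For (d)$\Rightarrow$(b), the key step is to show that the Brown-Comenetz dual of a kappa module has support containing every generalisation: $\supp(\Gamma_\fq \ik)^*$ contains every $\fp \subseteq \fq$. Granted this, closure under Brown-Comenetz duality combined with $\Gamma_\fq \ik \in \sfC$ for each $\fq \in \mathcal{S}$ forces $\mathcal{S}$ to be generalisation closed, so its complement is specialisation closed.

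The main obstacle is the support computation for $(\Gamma_\fq \ik)^*$. My approach uses the characterising adjunction $\Hom_{\sfK(kG)}(Y,X^*) \cong \Hom_k(\Hom_{\sfK(kG)}(\ik, Y \otimes X), k)$, with $X = \Gamma_\fq \ik$, and detects $\Gamma_\fp (\Gamma_\fq \ik)^* \neq 0$ for $\fp \subseteq \fq$ by probing with compact objects $Y$ built from Koszul complexes at $\fp$, using a Matlis-type duality to produce non-vanishing of $H^*(G, Y \otimes \Gamma_\fq \ik)$ in the appropriate direction. The $\StMod(kG)$ version then follows from the $\KInj{kG}$ case via Proposition~\ref{pr:StMod-to-KInj}, which identifies $\StMod(kG) \simeq \KacInj{kG}$ and restricts the classification to subsets of $\Proj H^*(G,k)$; products, perpendicular categories, and Brown-Comenetz duality transfer along this identification.
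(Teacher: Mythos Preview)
Your implications (c)$\Rightarrow$(a) and (b)$\Rightarrow$(c) are correct and essentially match the paper (which phrases (b)$\Rightarrow$(c) via \cite[Corollary 5.7]{Benson/Iyengar/Krause:2008a} rather than expanding the argument). The remaining two implications, however, have problems.

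Your argument for (a)$\Rightarrow$(d) fails at the step ``each $X_\alpha$ lies in $\sfC$ (its support is contained in that of $X$)''. There is no reason the compact approximations of $X$ should have support contained in $\supp_R X$. Indeed, under (a) the support $\mathcal S$ of $\sfC$ is generalisation closed, whereas every compact object has specialisation closed support by Lemma~\ref{le:fg}; so a compact object lies in $\sfC$ only if its support is contained in the intersection of $\mathcal S$ with its own closure, which is typically much smaller than $\mathcal S$. Concretely, if $\mathcal S=\{\fp\}$ for a minimal prime $\fp$ and $X=\Gamma_\fp ik$, then no non-zero compact object has support inside $\{\fp\}$ unless $\fp$ is also maximal. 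The paper avoids this by proving (c)$\Rightarrow$(d) instead: given $\sfC=\sfD^\perp$ with $\sfD$ compact, one shows for each $D\in\sfD$ and $X\in\sfC$ that $D\otimes_k X=0$ (using that $D^\vee\otimes_k C\in{}^\perp\sfC$ for all compact $C$), whence $\Hom(D,X^*)\cong\Hom(D\otimes_k X,ik^*)=0$.

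Your sketch for (d)$\Rightarrow$(b) is too vague to assess; ``Matlis-type duality'' is a gesture, not an argument, and computing $\supp(\Gamma_\fq ik)^*$ directly is not straightforward. The paper instead introduces the objects $T(I(\fp))$ representing $\Hom_{H^*(G,k)}(H^*(G,-),I(\fp))$, which have support exactly $\{\fp\}$ and satisfy two key properties: the biduality map $T(I)\to T(I)^{**}$ is a split monomorphism, and $T(I(\fq))$ is a summand of a product of shifts of $T(I(\fp))$ whenever $\fq\subseteq\fp$. Closure under Brown--Comenetz duality then forces such products into $\sfC$, hence $\fq\in\supp\sfC$. The same $T(I(\fp))$ machinery also drives the paper's (a)$\Rightarrow$(b). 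You should replace your (a)$\Rightarrow$(d) and (d)$\Rightarrow$(b) by the paper's route through (c)$\Rightarrow$(d) and the $T(I(\fp))$ objects.
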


The proof of this result relies on a construction of objects $T(I)$ in $\KInj{kG}$ which we recall from
\cite[\S11]{Benson/Krause:2008a}. Given an injective $H^*(G,k)$-module $I$, the object $T(I)$ is defined in terms of the following natural isomorphism
\begin{equation}\label{eq:TI}
\Hom_{H^*(G,k)}(H^*(G,-),I)\cong\Hom_{\sfK(kG)}(-,T(I)).
\end{equation} 
For each prime ideal $\fp$ of $H^*(G,k)$ let $I(\fp)$ be the injective envelope of $H^*(G,k)/\fp$.

\begin{lemma}
\label{le:TI}
Let $\fp$ and $\fq$ be prime ideals in $H^*(G,k)$ with  $\fq\subseteq \fp$, and let $I$ be an injective
$H^*(G,k)$-module.
\begin{enumerate}[\quad\rm (1)]
\item $\mcV_G(T(I(\fp)))=\{\fp\}$.
\item $T(I(\fq))$ is a direct summand of a direct product of shifts of
$T(I(\fp))$.
\item The natural morphism $T(I)\to T(I)^{**}$ is a split monomorphism.
\end{enumerate}
\end{lemma}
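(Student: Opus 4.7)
For part (1), the plan is to apply the defining isomorphism \eqref{eq:TI} with $X=\ik$ to obtain a natural isomorphism $H^{*}(G,T(I(\fp)))\cong I(\fp)$ of $H^{*}(G,k)$-modules. Since $\ik$ is a compact generator of $\KInj{kG}$ and the action is canonical, the functor $\Gamma_{\fr}$ commutes with $\Hom^{*}_{\sfK(kG)}(\ik,-)$, so $\mcV_G(T(I(\fp)))$ coincides with $\supp_R I(\fp)$. The latter equals $\{\fp\}$ because $I(\fp)=E_R(R/\fp)$ is $\fp$-local and $\fp$-torsion.

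For part (2), I would first observe that since $\fq\subseteq\fp$, every element of $R\setminus\fp$ lies in $R\setminus\fq$ and so acts invertibly on the $\fq$-local module $I(\fq)$; hence $I(\fq)$ is naturally a graded module over the graded local ring $(R_{\fp},\fp R_{\fp})$. Over $R_{\fp}$, the shifts of $I(\fp)=E_{R_{\fp}}(k(\fp))$ form an injective cogenerator: for any nonzero homogeneous $m$ in an $R_{\fp}$-module $M$, the annihilator $\operatorname{ann}(m)$ is contained in $\fp R_{\fp}$, so the $\operatorname{ann}(m)$-torsion submodule of $I(\fp)$ contains the nonzero socle $k(\fp)$, yielding a nonzero graded map $M\to\Sigma^{n_m}I(\fp)$ that does not kill $m$, and the product of all such maps embeds $M$. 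Applied to $M=I(\fq)$, one obtains an embedding $I(\fq)\hookrightarrow\prod_{i}\Sigma^{n_{i}}I(\fp)$ of $R$-modules; since $I(\fq)$ is $R$-injective, hence $R_{\fp}$-injective, this embedding splits. Applying the right adjoint $T$, which preserves products and commutes with shifts, then yields the desired split embedding in $\KInj{kG}$. The main obstacle here is pinning down the correct ambient framework: a naive Matlis cogenerator argument over $R$ would fail because products of $\fp$-torsion modules are not $\fp$-torsion in general, so the key insight is to localize at $\fp$ and restrict to $\fp$-local modules, where the socle argument runs smoothly.

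For part (3), the plan is to use the defining adjunction to reduce the splitting of $T(I)\to T(I)^{**}$ to a splitting at the level of $H^{*}(G,k)$-modules. By \eqref{eq:TI}, morphisms $T(I)^{**}\to T(I)$ correspond bijectively to $R$-linear maps $H^{*}(G,T(I)^{**})\to I$, under which the identity on $T(I)$ matches the identity on $I$. By Lemma~\ref{lem:bc}(2), applying $H^{*}(G,-)$ to the biduality map $T(I)\to T(I)^{**}$ yields the $k$-vector-space biduality $I\to I^{\dagger\dagger}$, where $(-)^{\dagger}=\Hom_{k}(-,k)$; this also identifies $H^{*}(G,T(I)^{**})$ with $I^{\dagger\dagger}$ as $R$-modules. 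Constructing a splitting in $\KInj{kG}$ therefore reduces to producing an $R$-linear retraction $I^{\dagger\dagger}\to I$ of the biduality map. Since $H^{*}(G,k)$ is noetherian and $I$ is injective, $I$ is pure-injective, and the biduality map is always a pure monomorphism; hence the required retraction exists.
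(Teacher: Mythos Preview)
Your argument for part~(2) follows the same route as the paper's, just with more details supplied on why the shifts of $I(\fp)$ cogenerate the $\fp$-local modules.

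For part~(1) there is a genuine gap: you assert that $\ik$ is a compact generator of $\KInj{kG}$, but this holds only when $G$ is a $p$-group. For a general finite group the tensor unit $\ik$ is compact but need not generate; a compact generator is, for instance, the injective resolution of a direct sum of all simple $kG$-modules. Consequently, knowing only that $H^{*}(G,T(I(\fp)))\cong I(\fp)$ is $\fp$-local and $\fp$-torsion is not enough to pin down the support. The fix is immediate and is exactly what the paper does: apply the defining isomorphism~\eqref{eq:TI} with an \emph{arbitrary} compact object $C$ in place of $\ik$, obtaining
\[
\Hom^{*}_{\sfK(kG)}(C,T(I(\fp)))\cong\Hom^{*}_{H^{*}(G,k)}(H^{*}(G,C),I(\fp)).
\]
Since $H^{*}(G,C)$ is finitely generated over $H^{*}(G,k)$ for compact $C$, the right-hand side is $\fp$-local and $\fp$-torsion, and then \cite[Corollary~5.9]{Benson/Iyengar/Krause:2008a} gives the conclusion.

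For part~(3) your strategy works but is more elaborate than needed, and one step---the $R$-linearity of the identification $H^{*}(G,T(I)^{**})\cong I^{\dagger\dagger}$---is asserted rather than checked. The paper sidesteps this entirely: it uses only that the map $H^{*}(G,T(I))\to H^{*}(G,T(I)^{**})$ is a \emph{monomorphism} (being the vector-space biduality, by Lemma~\ref{lem:bc}(2)), and then applies the exact functor $\Hom_{H^{*}(G,k)}(-,I)$ to obtain a surjection
\[
\Hom_{\sfK(kG)}(T(I)^{**},T(I))\twoheadrightarrow\Hom_{\sfK(kG)}(T(I),T(I)).
\]
Any preimage of the identity is the desired retraction. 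This uses only the injectivity of $I$ and avoids both the $R$-module identification of the double dual and the appeal to pure-injectivity.
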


\begin{proof}
(1) Let $C$ be a compact object of $\KInj{kG}$.  Then $H^{*}(G,C)$ is
finitely generated as a module over $H^{*}(G,k)$. It follows that
$\Hom^{*}_{\sfK(kG)}(C,T(I(\fp)))$ is $\fp$-local and $\fp$-torsion as a $H^{*}(G,k)$-module,
for it is isomorphic to
\[
\Hom^{*}_{H^*(G,k)}(H^*(G,C),I(\fp)).
\]
Now apply \cite[Corollary 5.9]{Benson/Iyengar/Krause:2008a}.

(2) The module $I(\fq)$ is $\fp$-local and the shifted copies of $I(\fp)$
form a set of injective cogenerators for the category of $\fp$-local
modules.  Thus $I(\fq)$ is a direct summand of a product of shifted
copies of $I(\fp)$.  Now apply the functor $T$ and observe that $T$
preserves products.

(3) The induced map $H^*(G,T(I))\to H^*(G,T(I)^{**})$ is a monomorphism, by Lemma~\ref{lem:bc}(2). Applying $\Hom_{H^*(G,k)}(-,I)$ to this map and using \eqref{eq:TI}, one gets an epimorphism
\[
\Hom_{\sfK(kG)}(T(I)^{**},T(I))\to\Hom_{\sfK(kG)}(T(I),T(I))
\]
which provides an inverse for $T(I)\to T(I)^{**}$.
\end{proof}

\begin{proof}[Proof of Theorem \emph{\ref{th:dual-prod}}]
First we prove the theorem for $\KInj{kG}$.

(a) $\Rightarrow$ (b):  Let $\fp$ be a prime ideal in the support of $\sfC$. Theorem~\ref{th:classify-KInj} implies that $T(I(\fp))$ is in $\sfC$, since its support is $\{\fp\}$, by Lemma~\ref{le:TI}(1).
Therefore $T(I(\fq))$ is in $\sfC$ for every $\fq\subseteq \fp$ by Lemma~\ref{le:TI}, since $\sfC$ is closed under products. Thus the complement of the support of $\sfC$ is specialisation closed.

(b) $\Rightarrow$ (c): Let $\mcV$ denote the complement of the support
of $\sfC$. Since it is specialisation closed the localising
subcategory $\sfK_\mcV$ of $\KInj{kG}$ corresponding to $\mcV$ is
generated by compact objects, by Proposition~\ref{pr:loc-pr}.
Therefore $\sfK_\mcV^\perp=(\sfK^{\mathsf c}_\mcV)^\perp$ where $\sfK^{\mathsf c}_\mcV$
denotes the full subcategory consisting of the compact objects in
$\sfK_\mcV$. On the other hand, $\sfK_\mcV^\perp$ is the localising
subcategory consisting of all objects with support contained in the
complement of $\mcV$, by \cite[Corollary
5.7]{Benson/Iyengar/Krause:2008a}. Thus Theorem \ref{th:classify-KInj}
implies $\sfC=(\sfK^{\mathsf c}_\mcV)^\perp$.

(c) $\Rightarrow$ (a): This implication is clear.

(c) $\Rightarrow$ (d): Suppose that $\sfC=\sfD^\perp$ with $\sfD$ a subcategory of compact objects. Fix objects $D$ in $\sfD$ and $X$ in $\sfC$. We need to show that  $\Hom_{\sfK(kG)}(D,X^*)=0$.  
For any compact object $C$ there are isomorphisms
\begin{align*}
\Hom_{\sfK(kG)}(D\otimes_k C,X)&\cong\Hom_{\sfK(kG)}(D,\Hom_k(C,X))\\
&\cong\Hom_{\sfK(kG)}(D,C^\vee\otimes_k X)=0
\end{align*} 
where the last one holds because $\sfC$ is tensor ideal. Hence
$D\otimes_{k}C$ is in ${}^{\perp}\sfC$ for any compact object
$C$. Thus $D^\vee\otimes_{k}C$ is in ${}^{\perp}\sfC$ as well, since
$D^{\vee}$ is a direct summand of
$D^{\vee}\otimes_{k}D\otimes_{k}D^{\vee}$. A similar argument now
yields
\[
\Hom_{\sfK(kG)}(C,D\otimes_{k}X)\cong\Hom_{\sfK(kG)}(C\otimes_k D^{\vee},X)=0
\]
for any compact object $C$. Therefore $D\otimes_k X=0$, so that
\begin{align*}
\Hom_{\sfK(kG)}(D,X^*)&\cong\Hom_{\sfK(kG)}(D,\Hom_k(X,ik^*))\\
&\cong\Hom_{\sfK(kG)}(D\otimes_k X,ik^*)=0.
\end{align*}

(d) $\Rightarrow$ (b): Suppose that $\sfC$ is closed under
Brown-Comenetz duality, and let $\fp$ be a prime in the support of
$\sfC$. We apply Lemma~\ref{le:TI} several times. First notice that
$T(I(\fp))$ is in $\sfC$, by Theorem \ref{th:classify-KInj}.  Given
any set $\{X_\alpha\}$ where each $X_\alpha$ is a shift of
$T(I(\fp))$, it follows that the complex
\[ 
\prod_{\alpha}X_\alpha^{**} = 
\Bigl(\bigoplus_{\alpha}X_\alpha^*\Bigr)^* 
\] 
is in $\sfC$. The natural map $\prod_{\alpha} X_\alpha \to
\prod_{\alpha}X_\alpha^{**}$ is a split monomorphism, and hence
$\prod_{\alpha}X_\alpha$ is in $\sfC$. If $\fq\subseteq\fp$ then
$T(I(\fq))$ is a direct summand of a direct product of shifts of
$T(I(\fp))$. It follows that $\fq$ is also in the support of $\sfC$.
Thus the complement of this set is specialisation closed.

This completes the proof of the theorem for $\KInj{kG}$. It remains
to consider the category $\StMod(kG)$. We use the same arguments as
before for the implications (b) $\Rightarrow$ (c) $\Rightarrow$ (d)
and (c) $\Rightarrow$ (a), because they are formal, given
Theorem~\ref{th:StMod}. For the other implications, we identify
$\StMod(kG)$ with the category $\KacInj{kG}$ of acyclic complexes and
view each tensor ideal localising subcategory of $\StMod(kG)$ as a
tensor ideal localising subcategory of $\KInj{kG}$. Then we use the
fact that the inclusion into $\KInj{kG}$ preserves products and
Brown-Comenetz duals. Moreover, the support of $\KacInj{kG}$
equals $\Proj H^*(G,k)$, by Proposition~\ref{pr:Chou}.
\end{proof}

\subsection*{The telescope conjecture for $\StMod(kG)$ and 
$\KInj{kG}$}\ \medskip

\noindent
A localising subcategory $\sfC$ of a triangulated category $\sfT$ is
\emph{strictly localising} if the inclusion has a right adjoint.  This
is equivalent to the statement that there is a localisation functor
$L\col\sfT\to\sfT$ such that an object $X$ of $\sfT$ is in $\sfC$ if
and only if $LX=0$; see for example \cite[Lemma~3.5]{Krause:2000a}.
The obstruction to constructing the right adjoint is that the
collections of morphisms in the Verdier quotient may be too large to
be sets.

\begin{lemma}\label{le:strict}
Tensor ideal localising subcategories of $\KInj{kG}$ and $\StMod(kG)$ are strictly localising.
\end{lemma}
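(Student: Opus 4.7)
The plan is to exploit the classification of tensor ideal localising subcategories to present each such $\sfC$ as the \emph{ordinary} localising subcategory generated by a set of objects, and then invoke \cite[Corollary 4.4.3]{Neeman:2001a}, which produces a Bousfield localisation functor whose kernel is $\Loc(\sfS)$ for any set $\sfS$ of objects; by the paragraph preceding the lemma, this is precisely what strict localisation means. This is the same piece of machinery already used in the proof of Lemma~\ref{le:loc-minimal}.

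Concretely, for $\sfC\subseteq\KInj{kG}$ I would first apply Theorem~\ref{th:classify-KInj} to write $\sfC=\tau(\mcV)$ with $\mcV=\sigma(\sfC)\subseteq\Spec H^*(G,k)$. The stratification of $\KInj{kG}$ by $H^*(G,k)$ (Theorem~\ref{th:KInj-min}) says that each $\Gamma_\fp\KInj{kG}$ is the minimal tensor ideal localising subcategory containing $\Gamma_\fp\ik$, and the local-global principle (Theorem~\ref{th:local-global}) then upgrades this fibrewise statement to the global identification
\[
\sfC=\Loc^\otimes\{\Gamma_\fp\ik\mid\fp\in\mcV\}.
\]

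To remove the superscript $\otimes$, I would invoke the elementary observation that
\[
\Loc^\otimes(S)=\Loc(\{X\otimes_{k}D\mid X\in S,\ D\in\KInjc{kG}\})
\]
for any set $S$: the inclusion $\supseteq$ is formal, while $\subseteq$ follows because $\KInj{kG}=\Loc(\KInjc{kG})$ and $\otimes_{k}$ is exact and preserves coproducts in each variable, forcing the right-hand side to be tensor ideal. Since $\KInjc{kG}\simeq\sfD^{\mathsf b}(\mmod kG)$ is essentially small and $\mcV$ is a set, this exhibits $\sfC$ as an ordinary localising subcategory generated by a set, and Neeman's result applies. The argument for $\StMod(kG)$ runs identically, substituting Theorem~\ref{th:StMod}, the compact generators $\stmod(kG)$, and the unit $k$. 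All the substantive work has already been discharged by the classification and stratification theorems, so no step here is a genuine obstacle; the only point requiring even mild care is the passage from $\Loc^\otimes$ to $\Loc$, as that is the unique place where a proper-class issue could potentially sneak in.
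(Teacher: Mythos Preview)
Your argument is correct, but the paper takes a different (and slightly slicker) route. Rather than exhibiting $\sfC$ as $\Loc$ of a set and invoking Neeman, the paper constructs directly a cohomological, coproduct-preserving functor whose kernel is $\sfC$: with $\mcV=\supp_{H^*(G,k)}\sfC$ and $C$ a compact generator, the functor
\[
X\longmapsto \bigoplus_{\fp\notin\mcV}\Hom^*_{\sfK(kG)}(C,\Gamma_\fp X)
\]
has kernel exactly $\sfC$ by Theorem~\ref{th:classify-KInj}, and then \cite[Proposition~3.6]{Benson/Iyengar/Krause:2008a} supplies the localisation functor. This bypasses the $\Loc^\otimes\to\Loc$ reduction entirely. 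Your approach has the virtue of staying within machinery already used in the paper (Neeman's \cite[Corollary~4.4.3]{Neeman:2001a} from Lemma~\ref{le:loc-minimal}) and of making explicit that $\sfC$ is set-generated as a plain localising subcategory, which is a statement of independent interest; the paper's approach trades that for brevity. Both rest equally on the classification theorem, so neither is more elementary in substance.
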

\begin{proof}
Let $\sfC$ be a tensor ideal localising subcategory of $\KInj{kG}$,
and let $\mcV$ be its support, which is a subset of $\Spec
H^*(G,k)$. Fix a compact generator $C$ of $\KInj{kG}$ and consider the
functor from $\KInj{kG}$ to the category of graded abelian groups
sending an object $X$ to
$\bigoplus_{\fp\not\in\mcV}\Hom^*_{\sfK(kG)}(C,\Gamma_\fp X)$.
Theorem~\ref{th:classify-KInj} implies that its kernel is $\sfC$. The
functor is cohomological and preserves coproducts. Thus there
exists a localisation functor $\KInj{kG}\to\KInj{kG}$ with kernel
$\sfC$; see for example \cite[Proposition
3.6]{Benson/Iyengar/Krause:2008a}.

An analogous argument works for $\StMod(kG)$.
\end{proof}

A strictly localising subcategory $\sfC$ of a triangulated category
$\sfT$ is \emph{smashing} if the localisation functor $\sfT\to\sfT$
with kernel $\sfC$ preserves coproducts. If $\sfT$ is tensor
triangulated and generated by its tensor unit $\one$, then a
localisation functor $L\colon\sfT\to\sfT$ preserves coproducts if and
only if the natural morphism $L\one\otimes X\to LX$ is an isomorphism
for all $X$ in $\sfT$. This fact explains the term ``smashing'',
because in algebraic topology the smash product plays the role of the
tensor product. 

Next we discuss the telescope conjecture which is due to Bousfield and
Ravenel \cite{Bousfield:1979a,Ravenel:1984a}. In its general form, the
conjecture asserts for a compactly generated triangulated category
$\sfT$ that every smashing localising subcategory is generated by
objects that are compact in $\sfT$; see \cite{Neeman:1992b}. The
following result confirms this conjecture for $\KInj{kG}$ and
$\StMod(kG)$, at least for all smashing subcategories that are tensor
ideal.

\begin{theorem}
\label{th:tel}
Let $\sfC$ be a tensor ideal localising subcategory of $\KInj{kG}$. 
The following conditions are equivalent:
\begin{enumerate}[\quad\rm (a)]
\item The localising subcategory $\sfC$ is smashing.
\item The localising subcategory $\sfC$ is generated by objects
compact in $\KInj{kG}$.
\item The support of $\sfC$  is a specialisation closed subset of $\Spec H^*(G,k)$.
\end{enumerate}
A similar result holds for $\StMod(kG)$ with $\Spec H^*(G,k)$ replaced by  $\Proj H^*(G,k)$.
\end{theorem}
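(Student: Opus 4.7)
The plan is to set up the cycle $\text{(c)} \Rightarrow \text{(b)} \Rightarrow \text{(a)} \Rightarrow \text{(c)}$ in $\KInj{kG}$, and then transfer the result to $\StMod(kG)$ via the identification with $\KacInj{kG}$. For $\text{(c)} \Rightarrow \text{(b)}$, if $\mcV := \sigma(\sfC)$ is specialisation closed then Theorem~\ref{th:classify-KInj} identifies $\sfC$ with $\sfT_\mcV$. Writing $\mcV = \bigcup_{\fp \in \mcV} \mcV(\fp)$ and applying Proposition~\ref{pr:loc-pr}(2) with the compact generator $\ik$ realises $\sfC$ as $\Loc(\{\kos{\ik}{\fp} \mid \fp \in \mcV\})$, and each Koszul object $\kos{\ik}{\fp}$ is compact. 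For $\text{(b)} \Rightarrow \text{(c)}$, if $\sfC = \Loc(\sfD)$ with $\sfD$ a set of compact objects, then $\sigma(\sfC) = \bigcup_{D \in \sfD} \supp D$, because $\supp$ is sub-additive on triangles, additive on coproducts, and satisfies $\supp(X \otimes Y) \subseteq \supp X$. By Lemma~\ref{le:fg} each $\supp D$ is a closed subset of $\Spec H^*(G,k)$, so $\sigma(\sfC)$ is a union of closed sets and hence specialisation closed. The implication $\text{(b)} \Rightarrow \text{(a)}$ is Neeman's theorem that in a compactly generated triangulated category, any localising subcategory generated by a set of compact objects is smashing.

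The crux is $\text{(a)} \Rightarrow \text{(c)}$, which I would deduce from Theorem~\ref{th:dual-prod} applied to the right orthogonal. If $\sfC$ is tensor ideal and smashing with associated localisation triangle $\Gamma \to \mathrm{id} \to L$, then smashing together with tensor ideality yields $LX \cong L\one \otimes X$, so $\sfC^\perp = L(\KInj{kG})$ is a tensor ideal localising subcategory; as a right orthogonal it is automatically closed under products. Theorem~\ref{th:dual-prod} then implies that the complement of $\sigma(\sfC^\perp)$ is specialisation closed. To conclude, one verifies the identity $\sigma(\sfC^\perp) = \Spec H^*(G,k) \setminus \sigma(\sfC)$: for $\fp \in \sigma(\sfC)$, Theorem~\ref{th:KInj-min} gives $\Gamma_\fp \KInj{kG} \subseteq \sfC$, and since $\sfC \cap \sfC^\perp = 0$ no nonzero object of $\Gamma_\fp \KInj{kG}$ lies in $\sfC^\perp$; conversely, for $\fp \notin \sigma(\sfC)$ with $\Gamma_\fp \one \ne 0$, tensoring $\Gamma \one \to \one \to L \one$ with $\Gamma_\fp \one$ produces a triangle whose leftmost term lies in $\sfC \cap \Gamma_\fp \KInj{kG} = 0$, so $\Gamma_\fp \one \cong \Gamma_\fp \one \otimes L \one$ belongs to the tensor ideal $\sfC^\perp$. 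Hence $\sigma(\sfC)$ itself is specialisation closed.

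For the $\StMod(kG)$ version, the argument goes through verbatim once one identifies $\StMod(kG) \simeq \KacInj{kG}$ inside $\KInj{kG}$, using that this embedding is compatible with products, coproducts, tensor products, and supports (which now land in $\Proj H^*(G,k)$), and invoking the $\StMod(kG)$ half of Theorem~\ref{th:dual-prod}. The main obstacle throughout is the identification $\sigma(\sfC^\perp) = \sigma(\sfC)^c$: without the full force of the stratification Theorem~\ref{th:KInj-min} one cannot assert $\Gamma_\fp \KInj{kG} \subseteq \sfC$ whenever $\fp \in \sigma(\sfC)$, and the bridge from smashing to specialisation closure collapses.
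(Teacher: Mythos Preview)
Your proof is correct, and the pivotal step is the same as the paper's: for the implication out of (a) you pass to $\sfC^\perp$, observe it is a tensor ideal localising subcategory closed under products, and invoke Theorem~\ref{th:dual-prod}. The routing differs, however. The paper proves the cycle $(\mathrm a)\Rightarrow(\mathrm b)\Rightarrow(\mathrm c)\Rightarrow(\mathrm a)$: from Theorem~\ref{th:dual-prod} it reads off condition~(c) there, namely $\sfC^\perp=\sfD^\perp$ with $\sfD$ a class of compacts, and concludes $\sfC=\Loc(\sfD)$ directly; then $(\mathrm c)\Rightarrow(\mathrm a)$ is obtained from \cite[Corollary~5.7]{Benson/Iyengar/Krause:2008a} without appealing to Neeman's general smashing theorem. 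Your route instead extracts condition~(b) of Theorem~\ref{th:dual-prod} and then proves the auxiliary identity $\sigma(\sfC^\perp)=\Spec H^*(G,k)\setminus\sigma(\sfC)$ using stratification; you supply $(\mathrm c)\Rightarrow(\mathrm b)$ via Koszul objects and $(\mathrm b)\Rightarrow(\mathrm a)$ by citing Neeman. Both are valid; the paper's path is marginally shorter because it avoids the separate support computation and the external reference, while yours makes the role of stratification in the support identity more explicit. One small point you and the paper both leave implicit: that $\sfC^\perp$ is tensor ideal. This follows because for $Y\in\sfC^\perp$ the subcategory $\{Z:Y\otimes_k Z\in\sfC^\perp\}$ is localising (as $\sfC^\perp$ is, by smashing) and contains every compact $Z$ via the duality $\Hom(X,Y\otimes_k Z)\cong\Hom(X\otimes_k Z^\vee,Y)$.
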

\begin{proof}
We prove the theorem for $\KInj{kG}$; an analogous argument works for
the stable module category.  Let $L\col\KInj{kG}\to\KInj{kG}$ be a
localisation functor with kernel $\sfC$.  For each object $X$ in
$\KInj{kG}$, there exists a localisation triangle $\Gamma X\to X\to
LX\to $ with $\Gamma X$ in $\sfC$ and $LX$ in $\sfC^\perp$.  Using
this exact triangle one shows that $\sfC$ is smashing if and only if
$\sfC^\perp$ is closed under coproducts.

(a) $\Rightarrow$ (b): If $\sfC$ is smashing then $\sfC^\perp$ is a localising subcategory closed under products. Thus $\sfC^\perp=\sfD^\perp$ for some category $\sfD$ consisting of compact objects by Theorem~\ref{th:dual-prod}.  It follows that $\sfC$ is generated by $\sfD$.

(b) $\Rightarrow$ (c): Let $\sfD$ be a subcategory of compact objects
such that $\sfC=\Loc(\sfD)$. Then $\sfC$ and $\sfD$ have same support.
Now one uses that the support of each compact object is specialisation
closed by Lemma~\ref{le:fg}.

(c) $\Rightarrow$ (a): Let $\mcV$ be the support of $\sfC$. Then
$\sfC$ consists of all objects with support contained in $\mcV$, by
Theorem~\ref{th:classify-KInj}.  This implies that $\sfC^\perp$ is the
localising subcategory consisting of all objects with support disjoint
from $\mcV$, since $\mcV$ is specialisation closed; see
\cite[Corollary 5.7]{Benson/Iyengar/Krause:2008a}. In particular,
$\sfC^\perp$ is closed under coproducts, and therefore $\sfC$ is
smashing.
\end{proof}

\subsection*{Left perpendicular categories}\ \medskip

\noindent
If $\mcV$ is a subset of $\Spec H^*(G,k)$, we write $\cl(\mcV)$ for the specialisation closure of $\mcV$, namely the smallest specialisation closed subset containing it.

\begin{theorem}
\label{th:zero-hom}
For $X$ and $Y$ in $\KInj{kG}$ the following are equivalent:
\begin{enumerate}[\quad\rm (a)]
\item $\Hom^*_{\sfK(kG)}(X,Y')=0$ for all $Y'\in\Loc^\otimes(Y)$.
\item $\cl(\mcV_G(X))\cap\mcV_G(Y)=\varnothing$
\end{enumerate}
\end{theorem}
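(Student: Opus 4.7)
The plan is to reduce the equivalence to a Matlis-style computation with the objects $T(I(\fp))$ from before Lemma~\ref{le:TI}, leveraging the stratification of $\KInj{kG}$ from Theorem~\ref{th:classify-KInj}.

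For (b) $\Rightarrow$ (a), set $\mcV=\cl(\mcV_G(X))$, a specialisation closed subset. Since $\mcV_G(X)\subseteq\mcV$ one has $L_\mcV X=0$. By Theorem~\ref{th:classify-KInj}, every $Y'\in\Loc^\otimes(Y)$ satisfies $\mcV_G(Y')\subseteq\mcV_G(Y)$, which is disjoint from $\mcV$ by hypothesis; hence $\Gamma_\mcV Y'=0$, so $Y'\cong L_\mcV Y'$. Because $L_\mcV$ is a localisation functor, for the $\mcV$-local object $Y'$ one computes
\[
\Hom^*_{\sfK(kG)}(X,Y')\cong\Hom^*_{\sfK(kG)}(L_\mcV X,Y')=\Hom^*_{\sfK(kG)}(0,Y')=0,
\]
which proves (a).

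For (a) $\Rightarrow$ (b), I would first upgrade (a) to a symmetric bi-vanishing: $\Hom^*_{\sfK(kG)}(X',Y')=0$ for all $X'\in\Loc^\otimes(X)$ and $Y'\in\Loc^\otimes(Y)$. For any compact $C\in\KInj{kG}$ and any $Y'\in\Loc^\otimes(Y)$, tensor-ideality of $\Loc^\otimes(Y)$ gives $Y'\otimes_k C^\vee\in\Loc^\otimes(Y)$, and strong dualisability of $C$ yields $\Hom^*(X\otimes_k C,Y')\cong\Hom^*(X,Y'\otimes_k C^\vee)$, which vanishes by (a). Since $\Loc^\otimes(X)$ coincides with $\Loc(\{X\otimes_k C:C\text{ compact}\})$, the bi-vanishing follows.

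Now assume (b) fails, and pick $\fq\in\mcV_G(X)$ and $\fp\in\mcV_G(Y)$ with $\fq\subseteq\fp$. By Lemma~\ref{le:TI}(1), $T(I(\fq))$ and $T(I(\fp))$ have supports $\{\fq\}$ and $\{\fp\}$, so Theorem~\ref{th:classify-KInj} places them in $\Loc^\otimes(X)$ and $\Loc^\otimes(Y)$ respectively. Applying \eqref{eq:TI} with $-=T(I(\fq))$, together with the identification $H^*(G,T(I))\cong I$ obtained by taking $-=ik$ in \eqref{eq:TI}, yields
\[
\Hom^*_{\sfK(kG)}(T(I(\fq)),T(I(\fp)))\cong\Hom^*_{H^*(G,k)}(I(\fq),I(\fp)).
\]
The right-hand side is nonzero, since the composition $H^*(G,k)/\fq\twoheadrightarrow H^*(G,k)/\fp\hookrightarrow I(\fp)$ extends along the inclusion $H^*(G,k)/\fq\hookrightarrow I(\fq)$ to a nonzero map $I(\fq)\to I(\fp)$ by injectivity of $I(\fp)$. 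This contradicts the bi-vanishing, establishing (b).

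The main subtlety is the upgrade from the one-sided condition (a) to the symmetric bi-vanishing over $\Loc^\otimes(X)\times\Loc^\otimes(Y)$, which relies on tensor-ideality of $\Loc^\otimes(Y)$ together with strong dualisability of compacts in $\KInj{kG}$. Once this is in hand, the argument reduces to a routine computation with injective hulls over the graded noetherian ring $H^*(G,k)$.
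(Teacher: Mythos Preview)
Your proof is correct and close in spirit to the paper's, though the execution differs in a few places worth noting.

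For (b) $\Rightarrow$ (a), the paper simply invokes \cite[Corollary 5.8]{Benson/Iyengar/Krause:2008a}; your direct argument via the localisation functor $L_\mcV$ is a self-contained substitute and is fine.

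For (a) $\Rightarrow$ (b), both arguments rest on the stratification Theorem~\ref{th:classify-KInj}, the objects $T(I(\fp))$, and the representability isomorphism \eqref{eq:TI}, ultimately reducing to a statement about injective hulls over $H^*(G,k)$. The difference is in the choice of test objects. You place $T(I(\fq))$ in $\Loc^\otimes(X)$ and $T(I(\fp))$ in $\Loc^\otimes(Y)$ and compute $\Hom^*(T(I(\fq)),T(I(\fp)))\cong\Hom^*_{H^*(G,k)}(I(\fq),I(\fp))\ne 0$ directly. The paper instead keeps $\Gamma_\fp X$ (with $\fp\in\mcV_G(X)$) in the first slot and uses $C\otimes_k T(I(\fq))$ (with $\fq\in\mcV_G(Y)$) in the second, then argues via \eqref{eq:TI} and the adjunction for $\Hom_k(C,-)$ that the nonzero $\fp$-local module $H^*(G,\Hom_k(C,\Gamma_\fp X))$ admits no nonzero map to $I(\fq)$, forcing $\fp\not\subseteq\fq$.

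Your explicit ``upgrade'' from (a) to bi-vanishing over $\Loc^\otimes(X)\times\Loc^\otimes(Y)$ is a genuine improvement in exposition: the paper tacitly uses that $\Hom^*(\Gamma_\fp X,-)$ vanishes on $\Loc^\otimes(Y)$, which needs exactly the argument you supply (strong dualisability of compacts plus the equality $\Loc^\otimes(X)=\Loc(\{X\otimes_k C:C\text{ compact}\})$). Your route is slightly cleaner and more symmetric; the paper's stays closer to $X$ itself. Either way the content is the same.
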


\begin{proof}
The implication (b) $\Rightarrow$ (a) is part of 
\cite[Corollary 5.8]{Benson/Iyengar/Krause:2008a}.  
Assume (a) holds. Choose primes
$\fq\in\mcV_G(Y)$ and $\fp\in\mcV_G(X)$, and a compact object $C$ in
$\KInj{kG}$ with $\Hom_k(C,\Gamma_\fp X)\ne 0$. Since
$\mcV_G(T(I(\fq)))=\{\fq\}$, by Lemma~\ref{le:TI},
Theorem~\ref{th:classify-KInj} yields
$C\otimes_{k}T(I(\fq))\in\Loc^\otimes(Y)$.  Since $\Gamma_\fp X =
\Gamma_\fp ik \otimes_k X \in \Loc^\otimes(X)$ holds, one has
\begin{align*} 
\Hom_{H^*(G,k)}^*(H^*(G,\Hom_k(C,\Gamma_\fp X)),I(\fq))&\cong
\Hom^*_{\sfK(kG)}(\Hom_k(C,\Gamma_{\fp}X),T(I(\fq)))\\
&\cong \Hom^*_{\sfK(kG)}(\Gamma_\fp X,C\otimes_k T(I(\fq)))=0.
\end{align*}
The $H^*(G,k)$-module $H^*(G,\Hom_k(C,\Gamma_\fp X))$ is
non-zero and $\fp$-local, so it follows that $\fp\not\subseteq\fq$
as required.
\end{proof}

If $\mcV$ is a subset of $\mcV_G$, we write ${^\perp}\mcV$ for
the set of primes $\fq\in\mcV_G$ such that for all $\fp\in\mcV$
we have $\fq\not\supseteq\fp$. In other words, ${^\perp}\mcV$
is the largest specialisation closed subset of $\mcV_G$ that
has trivial intersection with $\mcV$. The statement of the result below 
was suggested by a question of Jeremy Rickard.

\begin{corollary}
Let $\sfC$ be a tensor ideal localising subcategory of $\KInj{kG}$. If
$\mcV$ is the support of $\sfC$, then ${^\perp}\mcV$ is the support of
${^\perp}\sfC$.
\end{corollary}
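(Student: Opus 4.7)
The plan is to deduce this corollary as a formal consequence of Theorem~\ref{th:zero-hom}, with the support of $\Gamma_\fq\ik$ providing the witnesses on the other side. The key intermediate step will be a characterisation of the ambient subcategory: an object $X$ of $\KInj{kG}$ lies in ${^\perp}\sfC$ if and only if $\cl(\mcV_G(X)) \cap \mcV = \varnothing$.

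For this characterisation, I would argue as follows. Since $\sfC$ is tensor ideal localising, $\Loc^\otimes(Y) \subseteq \sfC$ for every $Y \in \sfC$. Hence Theorem~\ref{th:zero-hom} applied to the pair $(X,Y)$ tells us that $\Hom^*_{\sfK(kG)}(X,Y')=0$ for all $Y' \in \Loc^\otimes(Y)$ exactly when $\cl(\mcV_G(X)) \cap \mcV_G(Y) = \varnothing$. The condition $X \in {^\perp}\sfC$ is equivalent to this vanishing holding for every $Y \in \sfC$; since $\mcV=\bigcup_{Y\in\sfC}\mcV_G(Y)$, the resulting disjointness conditions collapse to the single requirement $\cl(\mcV_G(X)) \cap \mcV = \varnothing$.

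Given this characterisation, identifying $\supp({^\perp}\sfC)$ with ${^\perp}\mcV$ is a brief verification. For the inclusion $\supp({^\perp}\sfC) \subseteq {^\perp}\mcV$: if $\fq \in \supp({^\perp}\sfC)$, pick $X \in {^\perp}\sfC$ with $\fq \in \mcV_G(X)$; then $\cl(\{\fq\}) \subseteq \cl(\mcV_G(X))$ is disjoint from $\mcV$, which by definition places $\fq$ in ${^\perp}\mcV$. For the reverse inclusion: given $\fq \in {^\perp}\mcV$, the object $X := \Gamma_\fq\ik$ is nonzero (because $\mcV_G(\ik)=\mcV_G$ contains $\fq$) and has support exactly $\{\fq\}$, so $\cl(\mcV_G(X)) \cap \mcV = \varnothing$, which puts $X$ in ${^\perp}\sfC$ and $\fq$ in $\supp({^\perp}\sfC)$.

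The hard analytic work has been absorbed into Theorem~\ref{th:zero-hom} (and through it, into the construction of the objects $T(I(\fp))$), so there is no genuine obstacle beyond bookkeeping. The one subtlety worth flagging is that the tensor ideal hypothesis on $\sfC$ is used essentially in the characterisation step: it is what allows $\Loc^\otimes(Y) \subseteq \sfC$ for each $Y \in \sfC$, and hence what permits Theorem~\ref{th:zero-hom} to be applied to the elements of $\sfC$ one at a time.
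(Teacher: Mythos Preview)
Your proof is correct and takes essentially the same approach as the paper, whose proof consists of the single sentence ``This follows from Theorem~\ref{th:zero-hom}.'' You have simply spelled out the details: the characterisation $X\in{^\perp}\sfC \iff \cl(\mcV_G(X))\cap\mcV=\varnothing$ via Theorem~\ref{th:zero-hom}, followed by the two inclusions using $\Gamma_\fq\ik$ as a witness.

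One small remark: your argument implicitly relies on the description of ${^\perp}\mcV$ as the largest specialisation closed subset of $\mcV_G$ disjoint from $\mcV$, rather than the formula ``$\fq\not\supseteq\fp$ for all $\fp\in\mcV$'' given just before it in the paper (the latter should read $\fq\not\subseteq\fp$; as written it describes the complement of $\cl(\mcV)$, which is generalisation closed). With the intended definition your step ``$\cl(\{\fq\})$ disjoint from $\mcV$ places $\fq$ in ${^\perp}\mcV$'' is exactly right, since $\cl(\{\fq\})$ is a specialisation closed set disjoint from $\mcV$ and hence contained in the largest such.
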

\begin{proof}
This follows from Theorem~\ref{th:zero-hom}.
\end{proof}

\end{document}